\newcommand{\vinclude}[1]{\begingroup
\setbox0=\hbox{\includegraphics{#1}}
\parbox{\wd0}{\box0}\endgroup}
\lstdefinelanguage{Sage}[]{Python}{morekeywords={False,sage,True},sensitive=true}
\definecolor{dblackcolor}{rgb}{0.0,0.0,0.0}
\definecolor{dbluecolor}{rgb}{0.01,0.02,0.7}
\definecolor{dgreencolor}{rgb}{0.2,0.4,0.0}
\definecolor{dgraycolor}{rgb}{0.3,0.3,0.3}
\renewcommand{\emph}[1]{{\dblack{#1}}}
\newtheorem{theorem}{Theorem}
\newtheorem{mydef}{Definition}
\newtheorem{lemma}{Lemma}
\newtheorem{prop}{Proposition}
\newtheorem{remark}{Remark}
\newcommand{\lcap}{\boldmath$\langle$\unboldmath}
\newcommand{\rcap}{\boldmath$\rangle$\unboldmath}
\newcommand{\dotmap}{$\bullet$}
\title{Planar algebra presentations of $\text{URep}_{\mathbb{C}}(\mathbb{C}^+)$ and $\text{URep}_{\mathbb{F}_p}(\mathbb{F}_p^+)$}
\author{Ryan Vitale}
\begin{document}
\maketitle









\begin{abstract}
We give presentations of the planar algebra of unipotent representations of the groups $\mathbb{C}$ and $\mathbb{F}_p$ under addition using jellyfish and light leaf style arguments. These are some of the most natural examples of non-semisimple planar algebras. For the characteristic $p$ family of examples, a new generator appears in arbitrarily large box spaces as $p$ increases. We point toward future directions in getting results on first and second fundamental theorems for rings of vector invariants, as well as generalization of the examples given.
\end{abstract}

\def \low {\textbf{0}}
\def \high {\textbf{1}}

{\let\thefootnote\relax\footnote{This material is based upon work supported by the National Science Foundation under Grant No. DMS-1454767.}}

\section{Introduction}


Let $\mathbb{C}^+$ be the Lie group $\mathbb{C}$ under addition, and $\mathbb{F}_p^+$ the group $\mathbb{F}_p$ under addition. A unipotent representation $(V,\varphi)$ of a group $G$ is one in which $\varphi(x)-1$ is nilpotent for each $x \in G$. We consider unipotent (and smooth, in the case of $\mathbb{C}^+$) representations of $\mathbb{C}^+$ and $\mathbb{F}^+$ over the fields $\mathbb{C}$ and $\mathbb{F}_p$ in the context of planar algebras, denoting these categories by $\mathcal{C}_0 = URep_{\mathbb{C}}(\mathbb{C}^+)$ and $\mathcal{C}_p=URep_{\mathbb{F}_p}(\mathbb{F}_p^+)$. 

Planar algebras were formally defined by Vaughan Jones in \cite{PA1} and have been widely used in the study of subfactors. We aim to understand each category by looking at a subcategory which has the structure of a planar algebra, and still contains the full information of the category. These categories provide examples of non-semisimple planar algebras; currently there is not much in the literature on the non-semisimple case, but for an example see \cite{nonsemisimple}. We also are able to get information on vector invariants for $\mathbb{C}^+$ and $\mathbb{F}_p^+$. The characteristic $p$ case of vector invariants is currently an active area of research, with partial results in \cite{INVp,char2,secondchar2,richmaninv}. Our results for $\mathcal{C}_p$ give another persepective on some of these results.

Planar algebras provide a formalism for a $2$-dimensional symbolic language to express the morphisms in our categories. They give us the ability to draw these morphisms in a planar graphical language, and then reason about them as combinatorial and topological objects. A planar algebra is some collection of vector spaces called box spaces which have a multilinear associative action by the operad of planar tangles (this structure is discussed further in Sections \ref{sec:opas},  \ref{sec:ospas}, \ref{sec:ospaverts}, and for a full account see \cite{PA1}). Our main examples will come from particular cases of $Rep_k(G)$, the category of representations of a group $G$ over a field $k$. Fix some representation $V \in Rep_k(G)$ and consider representations which are $\otimes$-generated by $V$, i.e. built from $V$ using $-^\ast$ and $\otimes$, as well as the morphisms between such representations. The morphisms between objects whose factors are all $V$ and $V^\ast$ can be combined through composition, tensor product, and the evaluation and coevaluation of the duality between $V$ and $V^\ast$. These morphism spaces together with these combining operations fit together into a structure called a planar algebra, which we will describe in Section \ref{sec:opas}, and we will discuss the case of $Rep_k(G)$ in Section \ref{sec:repalg}).

In both cases $\mathcal{C}_0$ and $\mathcal{C}_p$ there is a self-dual two-dimensional representation $(V,\varphi)$ defined on the standard basis $v_0=(1,0),v_1=(0,1)$ by $\varphi_x(v_0)=v_0$ and $\varphi_x(v_1)=xv_0+v_1$. We denote by $\mathcal{C}_0(V)$ the full tensor subcategory of $\mathcal{C}_0$ with objects $V^{\otimes n}$, similarly for $\mathcal{C}_p(V)$. These subcategories contain enough information to describe each category; every indecomposable representation is a summand of some $V^{\otimes n}$ which can be recovered through a projection map.

\begin{prop} Let $J_i$ be the Jordan block of dimension $i$ with eigenvalue $1$. The indecomposable representations of $\mathcal{C}_0$ are enumerated by the sequence $(V_i,\varphi_i)_{i \in \mathbb{N}}$, where $V_i = \mathbb{C}^i$ and $\varphi_i:\mathbb{C}^+ \rightarrow GL(V_i)$ is defined by $\varphi_i(1)=J_i$.  The indecomposable objects of $\mathcal{C}_p$ are enumerated by the sequence $(V_i,\varphi_i)_{1\leq i \leq p}$ where $V_i = \mathbb{F}_p^i$, and $\varphi_i:\mathbb{F}_p^+ \rightarrow GL(V_i)$ is defined by $\varphi_i(1)=J_i$. Throughout this work we refer to $V_2$ as $V$ in both $\mathcal{C}_0$ and $\mathcal{C}_p$.
\end{prop}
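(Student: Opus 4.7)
The plan is to reduce the classification in both cases to classifying a single operator up to similarity, and then invoke Jordan canonical form.

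For $\mathcal{C}_0$, I would first observe that since $\mathbb{C}^+$ is a connected, simply connected abelian Lie group, a smooth representation $\varphi$ is uniquely determined by its differential at the identity, i.e.\ by the single matrix $N := d\varphi(1) \in \mathfrak{gl}(V)$, via $\varphi(x) = \exp(xN)$. The unipotency hypothesis forces $N$ to be nilpotent: if $N$ had a nonzero eigenvalue $\lambda$, then $\exp(xN)$ would have eigenvalue $e^{x\lambda} \ne 1$ for appropriate $x$, contradicting unipotency of $\varphi(x)$. Conversely, any nilpotent $N$ makes $\exp(xN) = I + xN + \tfrac{1}{2}x^2 N^2 + \cdots$ a polynomial in $N$ whose action minus $I$ is nilpotent, so smooth unipotent representations of $\mathbb{C}^+$ are in bijection with pairs $(V,N)$ where $N$ is nilpotent. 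Decomposing into indecomposables is then exactly Jordan canonical form for a nilpotent operator, whose indecomposable summands are single Jordan blocks of each size $i \in \mathbb{N}$. Taking $N = J_i - I$, i.e.\ $\varphi_i(1) = \exp(J_i - I)$ — or equivalently, reparameterizing so that $\varphi_i(1) = J_i$ (which amounts to a change of generator for the Lie algebra) — gives the stated list.

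For $\mathcal{C}_p$, a representation of $\mathbb{F}_p^+$ over $\mathbb{F}_p$ is determined by the single matrix $A := \varphi(1) \in GL(V)$ subject to $A^p = I$. Writing $A = I + N$ and expanding $(I+N)^p = I$ in characteristic $p$ using the Frobenius-style collapse of binomial coefficients yields $N^p = 0$, so $N$ is automatically nilpotent with nilpotency index at most $p$ (and hence unipotency of every $\varphi(x)$ is automatic). Conversely any nilpotent $N$ with $N^p = 0$ defines a valid representation. So the classification reduces to Jordan blocks $J_i$ of size at most $p$, giving the list $(V_i, \varphi_i)_{1 \le i \le p}$.

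I expect the step that requires the most care is arguing in the $\mathcal{C}_0$ case that smooth plus unipotent forces the exponential form and that $N$ must be nilpotent — one has to rule out any continuous but non-smooth behavior (handled by the fact that for Lie groups, continuous homomorphisms to $GL(V)$ are automatically smooth) and be sure that unipotency of $\varphi(x)$ for every $x$ really does force nilpotence of the generator $N$. The rest is standard Jordan form. The only remaining bookkeeping is verifying that $\varphi_i(1) = J_i$ (rather than $I + N_i$ for some other choice of $N_i$) gives a valid normalization, which follows because $\log J_i = (J_i - I) - \tfrac12 (J_i - I)^2 + \cdots$ is a polynomial in the nilpotent $J_i - I$, so setting $N_i := \log J_i$ yields the desired smooth representation $\varphi_i(x) = \exp(x N_i)$ with $\varphi_i(1) = J_i$.
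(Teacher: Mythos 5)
Your proposal is correct and takes essentially the same route as the paper: reduce each case to the data of a single matrix (via the Lie algebra/exponential correspondence for $\mathbb{C}^+$, and via cyclicity of $\mathbb{F}_p^+$ together with $(I+N)^p = I + N^p$ in characteristic $p$) and then invoke Jordan canonical form, which is exactly what the paper compresses into its citation of Srinivasan plus the remark on surjectivity of $\exp$. Your added details — that unipotency is automatic over $\mathbb{F}_p$, and that $N_i = \log J_i$ realizes the normalization $\varphi_i(1) = J_i$ — are consistent elaborations of the same argument.
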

\begin{proof}
This is well known, and presented by Srinivasan in \cite{repring}. For the case $\mathcal{C}_0$ the representation is defined by the image of $1$ by surjectivity of the exponential map (see the introduction to Chapter $3$), and for $\mathcal{C}_p$ the representation is determined by $1$ since $\mathbb{F}_p^+$ is cyclic.
\end{proof}

These indecomposables are all self-dual, as there is only one indecomposable up to isomorphism in each dimension. In $\mathcal{C}_0$, and in $\mathcal{C}_p$ excluding $V_p \in \mathcal{C}_p$, the indecomposable objects satisfy $V \otimes V_i \simeq V_{i-1} \oplus V_{i+1}$. In $\mathcal{C}_p$ we have $V \otimes V_p \simeq V_p \oplus V_p$. These $\otimes$ rules along with the full list of indecomposable objects show every indecomposable is in some tensor power of $V$. In particular, $V_i$ is a summand of multiplicity $1$ in $V^{\otimes i - 1}$. Since we retain all morphisms when taking the full subcategory $\otimes$ generated by $V$, there are projections $p_i:V^{\otimes i-1} \rightarrow V^{\otimes i-1}$ whose image and support are isomorphic to $V_i$ which restrict to the identity map on $V_i$, and in this sense we can see each indecomposable in our subcategory.

Presentations for $\mathcal{C}_0(V)$ and $\mathcal{C}_p(V)$ are stated in Section \ref{sec:presentations}, and are discussed in depth with proof in Chapters \ref{ch:char0} and \ref{ch:charp}. In the background we give several sections of introduction to the relevant algebraic structures and framework used in these presentations.

\subsection{Presentation theorems for $\mathcal{C}_0(V)$ and $\mathcal{C}_p(V)$}
\label{sec:presentations}
We state the two main theorems here; in the rest of the paper we give context to these statements, and proof of these statements. 

\begin{theorem}\label{thm:char0}
Let $Diag_0$ be the planar algebra over $\mathbb{C}$ with generators and relations $G_i$ and $E_i$ below. There is an isomorphism of planar algebras $Diag_0 \rightarrow Alg_0 = \mathcal{P}(Rep_{\mathbb{C}}(\mathbb{C}^+,V))$, where $V$ is the $2$-dimensional representation defined by $x \rightarrow \left( \begin{array}{c c} 1 & x \\ 0 & 1 \end{array} \right)$.
\begin{center}
\begin{tikzpicture}[scale=0.85]
\begin{scope}[decoration={
    markings,
    mark=at position 0.5 with {\arrow[scale=1.25]{latex}}}
    ]
\clip (-1,-1.25) rectangle (13,3.5);

\draw [postaction={decorate}] (0,0) circle (17pt);
\draw (1.2,0) node[scale=1.4] {$=$};
\draw (1.2,.05) node[anchor=south,scale=0.8] {$E_3$};
\draw (1.5,.05) node[anchor=west,scale=1.4] {$2$};
\draw [dotted] (-0.9,-1) rectangle (2.25,1);

\draw [-,postaction=decorate] (9.25,-0.75) to (9.25,0.75);
\draw [-,postaction=decorate] (9.75,-0.75) to (9.75,0.75);
\draw (10.25,0) node[scale=1.2] {$+$};

\draw [-,postaction={decorate}] (0,1.5) to (0,2.75);
\draw [fill=black] (0,2.75) circle (1.5pt);

\draw [-,postaction={decorate}] (8,3.25) to (8,1.5);
\draw (7,2.25) node[scale=1.4] {$=$};
\draw (7,2.3) node[anchor=south,scale=0.8] {$E_1$};

\draw [dotted] (5.5,1.25) rectangle (8.5,3.5);
\end{scope}

\draw (-0.1,2.25) node[anchor=east] {$G_1:$};

\begin{scope}[decoration={
	markings,
	mark=at position 0.5 with {\arrow[scale=1.25,thick]{[}},
	mark=at position 0.25 with {\arrow[scale=1.25]{latex}},
	mark=at position 0.8 with {\arrow[scale=1.25,>=latex]{<}}}
	]

\draw [-,postaction={decorate}] (2,3.25) to (2,1.5);
\draw [-,postaction={decorate}] (3.5,-0.75) to (3.5,0.75);
\draw [-,postaction={decorate}] (5.5,0.75) to (5.5,-0.75);

\draw (4.5,0) node[scale=1.4] {$=$};
\draw (4.5,0.05) node[scale=0.8,anchor=south] {$E_4$};
\draw (5,0) node[scale=1.2] {$-$};
\draw [dotted] (3,-1) rectangle (6,1);

\draw [dotted] (6.5,-1) rectangle (12.25,1);
\end{scope}

\draw (1.9,2.25) node[anchor=east] {$G_2:$};

\begin{scope}[decoration={
	markings,
	mark=at position 0.5 with {\arrow[scale=1.25,thick]{]}},
	mark=at position 0.8 with {\arrow[scale=1.25]{latex}},
	mark=at position 0.25 with {\arrow[scale=1.25,>=latex]{<}}}
	]
\draw [-,postaction={decorate}] (4,3.25) to (4,1.5);

\draw [-,postaction={decorate}] (10,3.25) to (10,1.5);
\draw [fill=black] (10,3.25) circle (1.5pt);
\draw [fill=black] (10,1.5) circle (1.5pt);
\draw (11,2.25) node[scale=1.4] {$=$};
\draw (11,2.3) node[anchor=south,scale=0.8] {$E_2$};
\draw (11.5,2.3) node[scale=1.3,anchor=west] {$0$};
\draw [dotted] (9.5,1.25) rectangle (12.25,3.5);

\end{scope}

\begin{scope}[decoration={markings,
	mark=at position 0.35 with {\arrow[scale=1.25,thick]{]}},
	mark=at position 0.55 with {\arrow[scale=1.25,>=latex]{>}},
	mark=at position 0.2 with {\arrow[scale=1.25,>=latex]{<}}}
	]
\draw [-,postaction=decorate] (12,.75) to [controls= +(-90:0.75) and +(-90:0.75)] (10.75,0.7);
\end{scope}
\begin{scope}[decoration={markings,
	mark=at position 0.35 with {\arrow[scale=1.25,thick]{]}},
	mark=at position 0.55 with {\arrow[scale=1.25,>=latex]{<}},
	mark=at position 0.2 with {\arrow[scale=1.25,>=latex]{>}}}
	]
\draw [-,postaction=decorate] (10.75,-0.75) to [controls= +(90:0.75) and +(90:0.75)] (12,-0.75);
\end{scope}

\begin{scope}[decoration={
	markings,
	mark=at position 0.3 with {\arrow[scale=1.25]{latex}},
	mark=at position 0.8 with {\arrow[scale=1.25]{latex}}}
	]

\draw [-,postaction={decorate}] (6.75,-0.75) to (8,0.75);
\draw [-,postaction={decorate}] (8,-0.75) to (6.75,0.75);	

\end{scope}


\draw [-,postaction={decoration={markings,mark=at position 0.35 with {\arrow[scale=1.25,thick]{[}},mark=at position 0.65 with {\arrow[scale=1.25,thick]{]}},mark=at position 0.2 with {\arrow[scale=1.25,>=latex]{<}},mark=at position 0.8 with {\arrow[scale=1.25,>=latex]{<}},mark=at position 0.53 with {\arrow[scale=1.25,>=latex]{>}}},decorate}] (6,1.5) to (6,3.25);

\draw (3.9,2.25) node[anchor=east] {$G_3:$};

\draw (8.5,0) node[scale=1.4] {$=$};
\draw (8.5,0.05) node[scale=0.8,anchor=south] {$E_5$};
\end{tikzpicture}
\end{center}
\end{theorem}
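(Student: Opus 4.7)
The plan is to establish the isomorphism $\Phi: Diag_0 \to Alg_0$ by the standard three-step pattern for planar algebra presentations: define $\Phi$ on generators, verify $\Phi$ respects the relations, then prove surjectivity and injectivity separately. First, I would fix images in $Alg_0$ for the three generators: $G_1$ sends a single input strand to (a scalar multiple of) the invariant vector $v_0 \in V$, giving the projection/injection of the unique invariant line, $G_2$ realizes the self-duality $V \cong V^*$ (so a cup becomes the canonical pairing and a cap the coevaluation coming from identifying $V^* \simeq V$), and $G_3$ maps to a specific morphism which I expect to be a ``dotted crossing'' witnessing asymmetry in the pairing since $V$ is self-dual but not symmetrically so. Verifying the relations $E_1$--$E_5$ reduces to direct computation on the basis $\{v_0,v_1\}$: $E_3$ (circle $= 2$) is $\dim V = 2$, $E_2$ is the vanishing pairing of two invariant vectors against the coevaluation, and $E_1, E_4, E_5$ are bilinear identities checkable on basis tensors.

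For surjectivity, I would argue that the image of $\Phi$ contains enough morphisms to span each box space $\mathrm{Hom}(V^{\otimes m},V^{\otimes n})$ in $Alg_0$. The cups and caps coming from $G_2$ together with the crossing/braiding elements generated by $G_3$ produce all Temperley--Lieb-type morphisms that respect the duality; then composing with $G_1$ in various positions inserts invariant vectors and covectors, which I expect to exhaust the additional morphism directions that arise from the fact that $V$ is unipotent (not semisimple). The key calculation is to show that $\mathrm{Hom}_{\mathbb{C}^+}(V^{\otimes m}, V^{\otimes n})$ has a basis indexed by diagrams reachable from the generators, using the known decomposition of $V^{\otimes n}$ into indecomposables $V_i$ via the fusion rule $V \otimes V_i \simeq V_{i-1} \oplus V_{i+1}$.

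For injectivity, I would deploy the jellyfish algorithm indicated in the abstract. The goal is to produce an explicit normal form for elements of each box space of $Diag_0$ by iteratively pulling the generator $G_1$ (and the $G_3$ tangle) toward the boundary of a disk, using the relations $E_1$--$E_5$ to simplify any ``collision'' between generators in the interior. Once every diagram is expressed in the normal form, I would compare the number of normal forms in the $n$-box space against $\dim \mathrm{Hom}(V^{\otimes m},V^{\otimes n})$ computed from the fusion decomposition. Equality of these dimensions, combined with surjectivity, forces injectivity.

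The main obstacle will be the jellyfish step: showing that every diagram can be rewritten so all applications of $G_1$ and $G_3$ sit on a ``collar'' near the outer boundary, without creating new interior generators that cannot be eliminated. This requires a careful induction on the number of internal generators together with a confluence-style argument that the rewriting terminates and is independent of the order in which generators are migrated outward. The relations $E_4$ and $E_5$, which mediate between $G_3$ and the crossings of strands, are the ones that must carry this reduction, and I expect the bookkeeping of signs and coefficients (especially against $E_2$'s vanishing condition) to be the subtlest part of the argument. Once that normal form is in hand, the dimension match is a routine calculation using the representation theory summarized in the proposition.
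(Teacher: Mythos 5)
Your overall architecture --- define a map $T$ on generators, verify $E_1$--$E_5$ by computation on the basis, then play a spanning set of diagrams off against a linearly independent image of the right size --- is the same as the paper's, and your injectivity logic (normal forms whose count matches $\dim Alg_0(n)$, combined with surjectivity) is in substance exactly the paper's final step. But two pieces of your plan have genuine problems. First, the generator assignments. In $Diag_0$ the cups, caps and crossings are \emph{not} produced by $G_2,G_3$: evaluation, coevaluation and the crossing are already part of the oriented symmetric planar operad, and $E_5$ is a \emph{relation} expressing the crossing as identity-pair plus cap-cup, not the definition of a generator. The generators $G_2,G_3$ are the two disorientation brackets, sent to the fixed isomorphism $\varphi:V\to V^{\ast}$ (with $\varphi(v_0)=v_1^{\ast}$, $\varphi(v_1)=-v_0^{\ast}$) and to $\varphi^{-1}$; your ``dotted crossing'' guess for $G_3$ would not satisfy the bracket-cancellation relation $E_1$. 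Likewise $G_1$ is sent to the invariant functional $v_1^{\ast}:V\to\mathbbm{1}$ (equivalently, dually, to the invariant vector $v_0$); describing it as ``the projection/injection of the unique invariant line'' conflates two maps that do exist (the inclusion $\mathbbm{1}\to V$ and the quotient $V\to\mathbbm{1}$) with an equivariant projection of $V$ onto its invariant line, which does \emph{not} exist --- indeed that non-splitness, i.e.\ $v_1^{\ast}(v_0)=0$, is precisely why $E_2$ holds.

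Second, and more seriously, your surjectivity step has no mechanism for proving that the chosen diagrams map to linearly independent morphisms in $Alg_0$; ``the image contains enough morphisms to span'' is the conclusion, not an argument. The paper's key tool here is a light-leaf construction: a family $X_0(n)$ of diagrams indexed by length-$n$ paths on the fusion graph (words in $L_{M_0}(n)$), a count $\#L_{M_0}(n)=\dim\mathrm{Hom}(V^{\otimes n},\mathbbm{1})$ which itself requires knowing $\dim\mathrm{Hom}(V_i,\mathbbm{1})=1$ for every indecomposable (computed from the explicit description of $\mathrm{Hom}(V_n,V_m)$), and then an independence proof obtained by pairing each $T(\delta_n(w))$ against the basis vector of $V^{\otimes n}$ obtained from $w$ by reading $R,L$ as the two basis vectors, and showing the resulting pairing matrix is triangular with nonzero diagonal. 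You need some argument of this kind; without it the dimension comparison you invoke for injectivity has nothing to compare against. On the rewriting side, note that no jellyfish generator exists in characteristic $0$ and no confluence statement is needed: you only have to show every diagram lies in the \emph{span} of the normal forms (pull dots to the sky, remove crossings with $E_5$, reduce to at most one bracket per strand with $E_1$ and $E_4$, delete circles and double dots with $E_3$ and $E_2$, then orient brackets), not that normal forms are reached uniquely, so the order-independence you worry about is beside the point.
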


\begin{theorem}Working over $\mathbb{F}_p$ if we add the generator $G_4$, and the relations $E_6, E_{s_1}, E_{s_2}$ to $Diag_0$, we get a planar algebra $Diag_p$. In $E_6$, $Sym_{p-1}$ is the symmetrizer on $p-1$ strands. There is an isomorphism of planar algebras $Diag_p \rightarrow Alg_p = \mathcal{P}(Rep_{\mathbb{F}_p}(\mathbb{F}^+,V))$, where $V$ is the $2$-dimensional representation defined by $x \rightarrow \left( \begin{array}{c c} 1 & x \\ 0 & 1 \end{array} \right)$.

$$G4: \underbrace{\vinclude{jellyfish}}_{2p-1}$$

\begin{center}
\vinclude{dotjellyfish} \vinclude{sym_p-1}
\end{center}

\begin{center}
\vinclude{jellyfish12} $\overset{\scriptscriptstyle{E_{s_1}}}{=}$ \vinclude{jellyfish} $\overset{\scriptscriptstyle{E_{s_2}}}{=}$ \vinclude{jellyfish1n}
\end{center}

\end{theorem}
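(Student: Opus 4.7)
The plan is to adapt the strategy from Theorem \ref{thm:char0}. I would construct a morphism of planar algebras $\Phi \colon Diag_p \to Alg_p$ by extending the char-zero assignment on $G_1, G_2, G_3$ and sending $G_4$ to the canonical morphism on $V^{\otimes(2p-1)}$ that factors through the new indecomposable $V_p$, then show $\Phi$ is well-defined, surjective, and injective. The key point is that in characteristic $p$ a new indecomposable summand $V_p$ appears in $V^{\otimes(p-1)}$ whose associated projector cannot be realized by $G_1, G_2, G_3$, and $G_4$ is the diagrammatic incarnation of this projector (or the associated cap--cup). The rotational relations $E_{s_1}, E_{s_2}$ encode the cyclic/self-dual symmetry of $V_p$, while $E_6$ records the fact that $\mathrm{Sym}_{p-1}$, a nonzero idempotent in characteristic $0$ but singular modulo $p$, is recovered (up to dotting one strand) from $G_4$.

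Verifying the relations in $Alg_p$ is a finite matrix computation in $\mathrm{End}(V^{\otimes n})$: the relations $E_1$--$E_5$ of $Diag_0$ hold verbatim over $\mathbb{F}_p$ since the char-zero proofs use only integer coefficients, and $E_6, E_{s_1}, E_{s_2}$ follow from the explicit description of the projector $V^{\otimes(p-1)} \twoheadrightarrow V_p \hookrightarrow V^{\otimes(p-1)}$. Surjectivity then reduces, by induction on tensor power as in Theorem \ref{thm:char0}, to the observation that $V$ together with projections onto each indecomposable $\otimes$-generates the category; the new indecomposable $V_p$ is supplied by $G_4$, and all others come from the char-zero generators.

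The main obstacle is injectivity. I would argue via a jellyfish / light leaves normal form: in each box space of $Diag_p$, define a spanning set of reduced diagrams consisting of a char-zero light-leaves skeleton topped by at most one copy of $G_4$. A rewriting algorithm pushes any instance of $G_4$ upward through the diagram and merges two adjacent copies using $E_6$ together with the rotational relations $E_{s_1}, E_{s_2}$; establishing termination and confluence of this algorithm is the first nontrivial piece. The delicate step is then proving that the number of reduced diagrams in each box space equals $\dim \mathrm{Hom}_{\mathbb{F}_p^+}(V^{\otimes m}, V^{\otimes n})$, using the Krull--Schmidt decomposition of $V^{\otimes n}$ into copies of $V_1, \dots, V_p$ and, crucially, the modified fusion rule $V \otimes V_p \simeq V_p \oplus V_p$ which accounts for the extra multiplicities not present in characteristic $0$. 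Linear independence of the images in $Alg_p$ is then checked by evaluating reduced diagrams on an explicit basis of $V^{\otimes n}$, after which the dimension match forces $\Phi$ to be an isomorphism.
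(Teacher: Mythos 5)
Your overall scaffolding (define $T_p$ on generators, verify relations, exhibit a spanning set by rewriting and a linearly independent image of matching size) is the paper's strategy, but two of your concrete choices would fail. First, the normal form you propose --- ``a char-zero light-leaves skeleton topped by at most one copy of $G_4$,'' with adjacent jellyfish merged via $E_6$ --- is not a basis, and no relation in $Diag_p$ merges two jellyfish. In the paper the reduced diagrams are indexed by plateaus of height $p-1$ (equivalently paths in the fusion graph, which may bounce off $V_p$ arbitrarily often because $V\otimes V_p\simeq V_p\oplus V_p$), so a single box space contains reduced diagrams with arbitrarily many jellyfish, neighbouring ones sharing up to $p-1$ legs; $E_6$ only eliminates a jellyfish that carries a \emph{dot} on a leg, replacing it by $Sym_{p-1}$. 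With ``at most one $G_4$'' your count of reduced diagrams falls short of $\dim\mathrm{Hom}_{\mathbb{F}_p^+}(V^{\otimes n},\mathbbm{1})$ and the dimension-matching step collapses. Relatedly, ``push $G_4$ upward and merge'' is not enough of a rewriting algorithm: the paper has to derive several consequences of $E_6$ --- strand depth reduction (no point of the complement at distance $\geq p$ from the sky, which is what bounds the plateaus by height $p-1$), vanishing of a capped jellyfish, and a snipping relation when two jellyfish share $p$ legs --- and then still needs a graph-theoretic argument (as in Bigelow--Morrison--Peters--Snyder) to exclude closed jellyfish networks not attached to the boundary. Termination/confluence is not the main issue; having the right target set of diagrams and the right derived moves is.

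Second, you leave $T_p(G_4)$ as ``the canonical morphism factoring through $V_p$,'' but the verification of $E_6$, $E_{s_1}$, $E_{s_2}$ and the independence argument both require an explicit invariant: the paper takes $j_p\colon V^{\otimes 2p-1}\to\mathbbm{1}$ with value $1$ on standard basis vectors of length $p-1$ or $2(p-1)$ and $0$ otherwise, proves its $\mathbb{F}_p^+$-invariance separately, and checks $E_6$ using that the right partial trace of $Sym_{p-1}$ vanishes mod $p$. Finally, ``linear independence by evaluating reduced diagrams on an explicit basis'' hides the step the paper flags as delicate: there is no weight-vector/highest-weight argument available here, so independence is obtained from an upper-triangular pairing with respect to a specific order on path words, using invariance lemmas for the jellyfish-built operators (the span of basis vectors of length $<p-1$ is preserved) to show the off-diagonal pairings vanish. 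Your proposal needs these ingredients spelled out, and the normal form corrected, before the dimension count and the isomorphism conclusion go through.
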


\def\s{\hspace{0.5mm}}
The relations $E_{s_1}$ and $E_{s_2}$ are explicit relations that make the jellyfish symmetric, as the transposition $(1 \s 2)$ and cycle $(1 \s 2 \cdots n)$ generate the symmetric group $S_n$. Application of a crossing to any pair of legs of the jellyfish acts trivially.

\section{Background}
In this section we first state a few elementary lemmas and definitions from linear algebra and automata theory. This will provide useful structure and terminology for the proofs of the main theorems. We then discuss the algebraic framework of oriented symmetric planar algebras used for the presentations in the main theorems of Section \ref{sec:presentations}. In particular we show how a subcategory of a category of representations that is tensor-generated by a single object can be given the structure of a planar algebra. Finally, we discuss the main techniques used in proving the main theorems by an outline of the proof process. As a technical note we discuss the use of disorientation markings (oriented strands and orientation reversing brackets) in our diagrams. This explains why we use oriented strands and a vertex for the isomorphism $\varphi:V \rightarrow V^\ast$ to define Disoriented Temperley-Lieb instead of the more typically used unoriented planar algeba Temperley-Lieb first described in \cite{TL}.
\subsection{Linear Algebra}
In this section we fix some notation and prove two useful lemmas. We will deal frequently with tensor powers of two dimensional vector spaces, and the following definition will be relevant.

\begin{mydef}\label{bits}
Suppose $V$ is a two-dimensional vector space over $k$ with basis $(v_0,v_1)$. Then $V^{\otimes n}$ has basis $Z_n=\{v_0,v_1\}^{\otimes n}$ indexed by $B=\{0,1\}^{\times n}$. Define the \textbf{length} of a basis vector by its image under $l_n:Z_n \rightarrow \mathbb{N}$, where $l_n(z_b) = \sum_i b_i$. We denote the basis vectors of length $j$ in $V^{\otimes n}$ by $Z_n(j)$, and note that $|Z_n(j)|={{n} \choose {j}}$.
\end{mydef}

\def \low {\textbf{0}}
\def \high {\textbf{1}}

When it simplifies notation, we use concatenation or $\cdot$ for $\otimes$, denote $v_0$ by \low, denote $v_1$ by \high, and denote their dual vectors by $\overline{\high}$ and $\overline{\low}$. As an example of this notation convention, and of Definition \ref{bits} when $n=3$, we have $Z_3(0)=\{ \low\low\low \}, Z_3(1)=\{ \low\low\high , \low\high\low , \high\low\low \},Z_3(2)=\{ \low\high\high , \high\low\high , \high\high\low \},Z_3(3)=\{ \high\high\high \}$.

We will care about order, so will work with sequences of vectors instead of sets. Let $T:V \rightarrow W$ be a map of vector spaces, and $X=(x_i)$ a sequence in $V^n$. By $\text{span}(X)$ we mean the span of the vectors $x_1,...,x_n$ in $V$. By $T(X)$ we mean the sequence $(T(x_i))$, and when we say $T(X)$ is independent, we mean the vectors $T(x_1),...,T(x_n)$ are linearly independent in $W$. To clarify some of the arguments and language used later, we prove two elementary statements from linear algebra.

\begin{lemma} Let $T:V \rightarrow W$ be a map of vector spaces with $dim(W)=n$. Suppose we have sequences $X,Y \in V^n$ such that $span(Y)=V$, and $T(X)$ is linearly independent. Then $T$ is an isomorphism, and both $X$ and $Y$ are bases of $V$. 
\end{lemma}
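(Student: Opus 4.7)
The plan is to first pin down $\dim(V)$ using the two hypotheses, then deduce everything else. First I would observe that since $T$ is linear, any linear dependence $\sum c_i x_i = 0$ among the entries of $X$ would push forward to a dependence $\sum c_i T(x_i) = 0$ in $W$; because $T(X)$ is linearly independent by hypothesis, no nontrivial such dependence exists, so $X$ itself is a linearly independent sequence of $n$ vectors in $V$. This gives the lower bound $\dim(V) \geq n$.

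Next I would use the spanning hypothesis on $Y$: since $Y$ is a sequence of $n$ vectors whose span is all of $V$, we have $\dim(V) \leq n$. Combining the two bounds forces $\dim(V) = n$. A linearly independent set of size equal to the dimension is a basis, so $X$ is a basis of $V$; a spanning set of size equal to the dimension is also a basis, so $Y$ is a basis of $V$.

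Finally, for $T$: it is a linear map between two $n$-dimensional spaces, and its image contains the $n$ linearly independent vectors $T(x_1), \ldots, T(x_n)$, which span an $n$-dimensional subspace of $W$. Since $\dim(W) = n$, this subspace is all of $W$, so $T$ is surjective; surjectivity between finite-dimensional spaces of equal dimension implies $T$ is an isomorphism.

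There is no real obstacle here — the statement is a standard dimension-counting exercise. The only point that needs care is the direction of the implication ``$T(X)$ independent $\Rightarrow$ $X$ independent,'' which is immediate from linearity of $T$ but is the step that does real work in establishing $\dim(V) = n$ rather than merely $\dim(V) \leq n$.
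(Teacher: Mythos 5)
Your proposal is correct and follows essentially the same route as the paper: independence of $T(X)$ forces $X$ independent (so $\dim(V)\geq n$), the spanning sequence $Y$ gives $\dim(V)\leq n$, and the equality of dimensions together with the $n$ independent vectors in the image yields that $T$ is an isomorphism and that $X$, $Y$ are bases. No gaps.
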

\begin{proof}
 Since $\text{span}(Y)=V$, $\dim(V) \leq n$. Since $T(X)$ is independent, $X$ is independent, so $\dim(V)\geq n$. Then $\dim(V)=n=\dim(W)$, and $T(X)$ has length $n$, so $T$ is an isomorphism. The sequences $X$ and $Y$ are bases of $V$ since they are sequences of size $n$ that are independent in $V$ and span $V$, respectively.
\end{proof}

\begin{lemma}\label{indep} Let $(S,<)$ be a finite totally ordered set, and $V$ a vector space. If there are maps $f:S \rightarrow V$ and $g:S \rightarrow V^*$ such that for all $x,y \in S$:
\begin{enumerate}
\item $g(x)(f(x)) \neq 0$
\item $x<y \implies g(x)(f(y))=0$
\end{enumerate}
Then both the values of $f$ and the values of $g$ are linearly independent.
\end{lemma}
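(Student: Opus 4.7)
The plan is to reduce both independence claims to a single matrix invertibility. Enumerate $S = \{x_1 < x_2 < \cdots < x_n\}$ and form the $n \times n$ matrix $M$ over the base field by
$$M_{ij} = g(x_i)(f(x_j)).$$
Condition (2) says that $M_{ij} = 0$ whenever $i < j$, so $M$ is lower triangular, and condition (1) says that each diagonal entry $M_{ii}$ is nonzero. Therefore $M$ is invertible.

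From this, both parts of the conclusion follow in one step each. For independence of the values of $f$: if $\sum_j c_j f(x_j) = 0$ in $V$, I would apply $g(x_i)$ to both sides for each $i$, obtaining $\sum_j M_{ij} c_j = 0$, i.e.\ $Mc = 0$ as a column equation, so $c = 0$. Symmetrically, for independence of the values of $g$: if $\sum_i c_i g(x_i) = 0$ in $V^*$, I would evaluate the functional on $f(x_j)$ for each $j$, obtaining $\sum_i c_i M_{ij} = 0$, i.e.\ $c^T M = 0$, and again $c = 0$. If one prefers to avoid matrices entirely, the same argument can be phrased as a forward induction: $g(x_1)$ isolates the coefficient of $f(x_1)$, then $g(x_2)$ isolates the next, and so on; and dually, evaluating a dependence among the $g(x_i)$ on $f(x_n)$, then $f(x_{n-1})$, etc., peels coefficients off in reverse order.

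There is no real obstacle — the lemma is essentially the observation that a triangular matrix with nonzero diagonal is invertible, recast in the pairing language $(f,g)$ that will be convenient later. The only detail worth flagging is that conditions (1) and (2) force both $f$ and $g$ to be injective (otherwise $f(x)=f(y)$ with $x<y$ would give $g(x)(f(y)) = g(x)(f(x)) = 0$, contradicting (1)), so there is no ambiguity in speaking of ``the values'' of $f$ and $g$ as $n$-element families. The usefulness of the lemma in Chapters \ref{ch:char0} and \ref{ch:charp} will be that $f$ and $g$ arise as diagrams paired against one another, and the triangularity hypothesis can be read off directly from the combinatorics of those diagrams.
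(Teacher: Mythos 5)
Your proof is correct and follows essentially the same route as the paper: both form the pairing matrix $A_{i,j}=g(x_i)(f(x_j))$, observe it is triangular with nonzero diagonal by conditions (1) and (2), and deduce independence of the values of $f$ and $g$ from invertibility (the paper phrases this as a dependence among columns/rows, you phrase it as $Mc=0$ and $c^TM=0$, which is the same thing). The remarks on injectivity and on triangular orientation are harmless additions; nothing further is needed.
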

\begin{proof}
For simplicity and without loss of generalization replace $S$ with the ordered set $(1,2,\dots,n)$. Consider the $n \times n$ matrix $A$ defined by $A_{i,j}=g(i)(f(j))$. Condition $(1)$ of the lemma implies all entries on the main diagonal of $A$ are non-zero. Condition $(2)$ implies $A$ is upper triangular, so together $(1)$ and $(2)$ imply $A$ is invertible. Any linear dependence among the rows of $A$ implies a linear dependence among the values of $g$, and a dependence among columns of $A$ implies a dependence among the values of $f$, so we have our result.
\end{proof}

\subsection{Formal languages and automata}

An \textbf{alphabet} can be any set $\Sigma$. A \textbf{word} of length $n$ over $\Sigma$ is some element of $\Sigma^n$, and a \textbf{language} over $\Sigma$ is some subset $L \subset \Sigma^*$, where $\Sigma^*=\bigcup_{i \in \mathbb{N}} \Sigma^m$. A \textbf{segment} of a word $w$ is some contiguous subsequence of $w$, and is called initial if it starts at the beginning of $w$. Denote the sublanguage of words of length $n$ in $L$ by $L(n)$. We will consider languages that are defined by automata.

\begin{mydef}
An \textbf{automaton} $M=(Q,A,\tau)$ over the alphabet $\Sigma$ is a rooted graph with directed edges labelled by $\Sigma$, where:
\begin{itemize}
\item $Q$ is the vertex set called the set of states, and the root $Q_* \in Q$ is called the start state.
\item $A$ is a subset of $Q$ called the accepting states.
\item $\tau$ is the set of directed edges labelled by $\Sigma$ called the transition function.
\end{itemize}

A word is \textbf{accepted} by $M$ if the path starting at $Q_*$ that it defines ends at an accepting state. The language $L_M$ is defined to be all words accepted by $M$.
\end{mydef}

For further reading on automata we reference \cite{AutBook}, but in the scope of this work we will just use the examples of the following subsections.

\subsection{Two infinite automata}

Define $M_0$ by $\Sigma=\{R,L\}, Q=A=\{V_i : i \in \mathbb{Z}_{>0}\}$, with $V_1$ as the start state and $\tau$ as illustrated below. We say the \textbf{depth} of a word $w \in \Sigma^*$, denoted $d(w)$, is the number of $R$s minus the number of $L$s in $w$. The depth at $i$ of $w$, denoted $d_i(w)$, is the depth of the initial segment of $w$ of length $i$. We see $w \in L_{M_0}$ when $d_i(w) \geq 0$ for every index $i$.

\begin{center}
\includegraphics{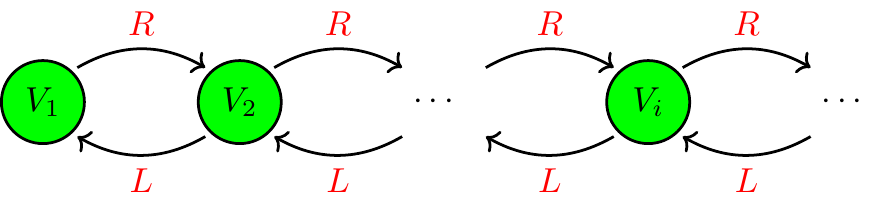}
\end{center}

Define $N_0$ by $\Sigma=\{\text{\dotmap,\lcap,\rcap\}}, Q=\{V_i : i \in \mathbb{Z}_{>0}\}, A= \{V_1\}$, with $V_1$ as the start state and $\tau$ as illustrated below. We say the \textbf{depth} of a word $w \in \Sigma^*$, denoted $d(w)$, is the number of \lcap s minus the number of \rcap s in $w$. The depth at $i$ of $w$ is the depth of the initial segment of $w$ of length $i$. We see that $w \in L_{N_0}$ when 

\begin{enumerate}
\item $d_i(w) \geq 0$ for every index $i$, and $d(w)=0$.
\item A \dotmap\hspace{1.5mm}can only appear at depth $0$ (i.e. at an index $i$ in $w$ where $d_i(w)=0$) .
\end{enumerate}

\begin{center}
\includegraphics{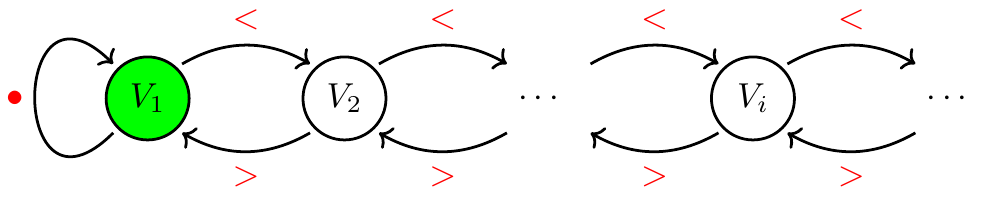}
\end{center}

\begin{prop}\label{aut0bij} There is a length preserving bijection between $L_{M_0}$ and $L_{N_0}$.
\end{prop}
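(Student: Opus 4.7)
The plan is to construct an explicit bijection via the standard left-to-right stack matching of brackets. Given $w \in L_{M_0}(n)$, scan $w$ maintaining a stack of positions: push the position on each $R$ and pop on each $L$ (well defined since $d_i(w) \ge 0$). Call an $R$ \textbf{matched} if its position is eventually popped and \textbf{unmatched} otherwise; every $L$ is matched since the depth is non-negative throughout. Define $f(w)$ by replacing each matched $R$ by \lcap, each $L$ by \rcap, and each unmatched $R$ by \dotmap. The candidate inverse $g \colon L_{N_0}(n) \to L_{M_0}(n)$ is the letterwise substitution \lcap $\mapsto R$, \rcap $\mapsto L$, \dotmap $\mapsto R$. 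Both maps are length preserving by construction.

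To show $f(w) \in L_{N_0}(n)$, let $u_i$ be the number of unmatched $R$s among $w_1,\dots,w_i$. A direct count gives
$$d_i^{N_0}(f(w)) = d_i^{M_0}(w) - u_i,$$
since turning an $R$ into a dot drops its depth contribution from $+1$ to $0$. Non-negativity follows because the stack always contains every unmatched $R$ seen so far, giving $d_i^{M_0}(w) \ge u_i$; the final depth vanishes because every $L$ is matched, so $u_n = d_n^{M_0}(w)$. Membership $g(v) \in L_{M_0}(n)$ is immediate, since $d_i^{M_0}(g(v)) = d_i^{N_0}(v) + (\text{number of dots in } v_1 \cdots v_i) \ge 0$.

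The heart of the argument is showing that the dots of $f(w)$ lie at depth $0$, that is, that $d_{i_k}^{M_0}(w) = k$ where $i_k$ is the position of the $k$-th unmatched $R$. I would argue by contradiction: if some matched $R$ at position $j < i_k$ were still on the stack right after reading $w_{i_k}$, then its eventual matching $L$ at some position $m > i_k$ would have to pop it, which requires first popping every $R$ pushed between $j$ and $m$---in particular the unmatched $R$ at $i_k$---contradicting its being unmatched. Hence the stack at position $i_k$ consists of exactly the $k$ unmatched $R$s seen so far, so $d_{i_k}^{M_0}(w) = k$ and $d_{i_k}^{N_0}(f(w)) = 0$ as required.

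Finally, $g \circ f = \mathrm{id}$ holds by inspecting the letter replacements. For $f \circ g$, observe that dots in $v \in L_{N_0}$ occur at depth $0$, so they partition $v$ into balanced subwords built from \lcap{} and \rcap{} alone; after applying $g$, the stack algorithm on $g(v)$ matches every bracket within its own subword and leaves the dot-$R$s on the stack forever, so the unmatched $R$s of $g(v)$ are exactly the images of the dots. Thus $f(g(v)) = v$. I expect the main obstacle to be the stack-invariant argument of the previous paragraph; the rest is routine bookkeeping.
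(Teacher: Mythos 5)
Your proof is correct and takes essentially the same approach as the paper: your stack-unmatched $R$s are exactly the paper's ``depth-increasing'' $R$s (an $R$ at index $i$ is never popped precisely when $d_j(w)\geq d_i(w)$ for all $j>i$), so your substitution rule defines the very same bijection. You simply verify the details more fully (the depth bookkeeping and both composites), which the paper leaves implicit.
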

\begin{proof} We will say an $R$ at index $i$ in $w \in L_{M_0}$ is depth-increasing if $d_i(w) \leq d_j(w)$ for all $j > i$. To define our bijection in the direction $L_{M_0} \xrightarrow{f} L_{N_0}$, replace each $R$ with \dotmap\hspace{1.5mm}if $R$ is depth-increasing, and with \lcap\hspace{1.5mm}if $R$ is not depth-increasing. Replace each $L$ with \rcap. Since all depth-increasing $R$s are replaced with \dotmap\hspace{1.5mm}the images of $f$ will have depth $0$, and every \dotmap\hspace{1.5mm}will occur at depth $0$. Further since $w$ has nonegative depth at each index, so will $f(w)$, so $f(w)$ will be accepted by $N_0$. The map in the reverse direction sends \dotmap\hspace{1.5mm} and \lcap\hspace{1.5mm}to $R$, and \rcap\hspace{1.5mm}to $L$.
\end{proof}

\subsection{Two finite automata}

Define $M_p$ by $\Sigma=\{R,L,A,B\}, Q=A=\{V_i : i \in \mathbb{Z}_{>0}\}$, with $V_1$ as the start state and $\tau$ as illustrated below. We write a word $w \in L_{N_p}$ as the concatenation of two parts $w=\overset{\leftarrow}{w} \cdot \overset{\rightarrow}{w}$, with $\overset{\leftarrow}{w} \in \{R,L\}^*$ and $\overset{\rightarrow}{w} \in \{A,B\}^*$. The notion of depth from $M_0$ still makes sense on $w$.

\begin{center}
\includegraphics{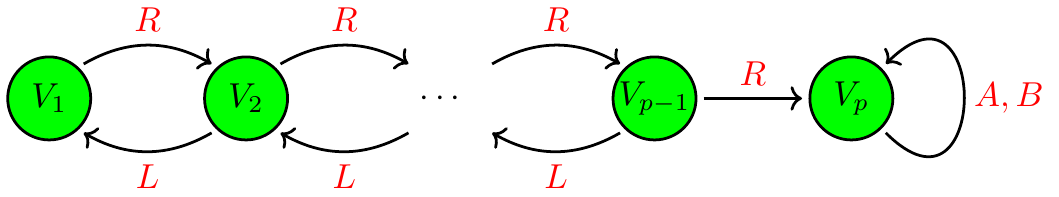}
\end{center}

We see that $w \in L_{M_p}$when:

\begin{enumerate}
\item $0 \leq d_i(w) \leq p-1$ for every index $i$.
\item $\overset{\rightarrow}{w}$ is empty if $d(\overset{\leftarrow}{w}) \neq p-1$.

\end{enumerate}

Define $N_p$ by $\Sigma=\{*,\text{\dotmap,\lcap,\rcap\}}, Q=\{V_i : i \in \mathbb{Z}_{>0}\}, A = \{V_0\}$ with $V_0$ as the start state and $\tau$ as illustrated below. The notion of depth from $N_0$ still makes sense.

\begin{center}
\includegraphics{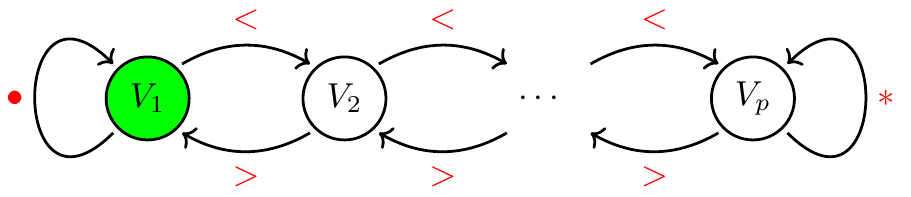}
\end{center}

We see that $w \in L_{N_p}$ when:
\begin{enumerate}
\item $0 \leq d_i(w) \leq p-1$ for every index $i$, and $d(w)=0$.
\item A \dotmap\hspace{1.5mm}can only appear at depth $0$.
\item A $*$ can only appear at depth $p-1$.
\end{enumerate}

As a visual example of these languages, we define mountains and plateaus. We then show $L_{M_p}$ corresponds bijectively to mountains, $L_{N_p}$ corresponds bijectively to plateaus, and that there is a length preserving bijection between $L_{M_p}$ and $L_{N_p}$.

\begin{mydef} A \textbf{mountain} of height $p-1$ is a walk on $\mathbb{N} \times \mathbb{Z}$ starting at $(0,0)$ in steps of $s_1=(1,1)$ and $s_{-1}=(1,-1)$, such that the walk stays above height $0$ until the first time it reaches height $p-1$.  We denote the set of mountains of height $p-1$ by $Mount_{p-1}$.
\end{mydef}

\begin{prop} There is a length preserving bijection from $Mount_{p-1}$ to $L_{M_p}$.
\end{prop}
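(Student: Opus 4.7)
The plan is to exhibit an explicit length-preserving bijection $\phi \colon Mount_{p-1} \to L_{M_p}$ by reading off the walk step by step and translating each step into a letter, using the two "phases" of the mountain: the portion before the walk first reaches height $p-1$, and the portion after. This matches the two-phase structure of words in $L_{M_p}$ given by the decomposition $w=\overleftarrow{w}\cdot\overrightarrow{w}$ with $\overleftarrow{w}\in\{R,L\}^*$ and $\overrightarrow{w}\in\{A,B\}^*$.

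Concretely, given a mountain with step sequence $(t_1,\ldots,t_n)$, let $k$ be the index of the first step at which the walk attains height $p-1$ (if it ever does; otherwise set $k=n$). Define the encoding on the prefix $t_1,\ldots,t_k$ by sending $s_1\mapsto R$ and $s_{-1}\mapsto L$, and on the suffix $t_{k+1},\ldots,t_n$ by sending $s_1$ and $s_{-1}$ to $A$ and $B$ respectively according to the $A,B$-transitions prescribed by the automaton $M_p$. Then I would verify that $\phi$ lands in $L_{M_p}$ by checking the two defining conditions: the depth of $\overleftarrow{\phi(m)}$ tracks exactly the height of the walk during the first phase, so $0\le d_i\le p-1$ on the prefix follows from the mountain condition that the walk stays in $[0,p-1]$ up to the first hitting time of $p-1$; and the suffix $\overrightarrow{\phi(m)}$ is non-empty precisely when the walk reaches height $p-1$, i.e.\ exactly when $d(\overleftarrow{\phi(m)})=p-1$, which is the second condition defining $L_{M_p}$.

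For injectivity and surjectivity I would construct the inverse explicitly: given $w=\overleftarrow{w}\cdot\overrightarrow{w}\in L_{M_p}$, decode each letter of $\overleftarrow{w}$ as a step ($R\mapsto s_1$, $L\mapsto s_{-1}$) and each letter of $\overrightarrow{w}$ as a step according to the rule used above. The resulting walk stays nonnegative throughout $\overleftarrow{w}$ because $d_i\ge 0$, and by the constraint in $L_{M_p}$ the $A,B$-segment only exists once the walk has climbed to height $p-1$, so the resulting path is a mountain. Length preservation is immediate since each step corresponds to one letter and vice versa.

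The only step that is not purely bookkeeping is pinning down what the $A,B$-transitions of $M_p$ do at the top level $V_{p-1}$; once that is read off from the figure, the verification that the constructions above are mutually inverse is a straightforward induction on word length (or equivalently on the number of steps of the walk). The main conceptual content is already captured: the mountain's two regimes (constrained-above-zero before the peak, free afterwards subject to staying in $[0,p-1]$) correspond exactly to the two alphabets available in $M_p$, with the transition between them triggered by the first arrival at height $p-1$.
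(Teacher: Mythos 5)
Your construction is the same one the paper uses: translate steps into letters, switching from the $\{R,L\}$ alphabet to the $\{A,B\}$ alphabet at the first arrival of the walk at height $p-1$. The genuine gap is in your claim that the encoding and decoding are ``mutually inverse by a straightforward induction.'' You verify membership in $L_{M_p}$ only against the two displayed conditions ($0\le d_i(w)\le p-1$, and $\overset{\rightarrow}{w}$ empty unless $d(\overset{\leftarrow}{w})=p-1$), and you treat these as the defining conditions of the language. They are not sufficient: $M_p$ is the fusion graph, and its top state $V_p$ has no outgoing $R$ or $L$ transition at all (only the two loops $A,B$, reflecting $V\otimes V_p\simeq V_p\oplus V_p$), so in an accepted word the depth can equal $p-1$ only at the final position of $\overset{\leftarrow}{w}$. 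Working only from the two displayed conditions, your maps are not mutually inverse: for $p=3$ the word $RRL$ satisfies both conditions, yet it decodes to the same walk (heights $1,2,1$) as $RRA$, so your decoding is not injective, and your encoding --- which always switches alphabets at the first arrival at height $p-1$ --- never outputs $RRL$, hence is not surjective onto the set those conditions cut out. (One can also see those conditions overcount: for $p=3$, $n=3$ they admit $4$ words while $V^{\otimes 3}$ has only $3$ indecomposable summands.)

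Once you argue from the automaton itself, everything works, and the fact to record is exactly the absence of $R,L$ transitions at $V_p$: it forces the alphabet switch in any accepted word to occur at the first arrival at depth $p-1$, which is what matches it to the mountain's first arrival at height $p-1$ and makes the two maps inverse to each other. By contrast, the issue you single out as the only non-bookkeeping step --- which of $A,B$ is the up-step --- is immaterial: both are loops at $V_p$, so either consistent labelling yields a length preserving bijection (your choice happens to be the reverse of the paper's $B=s_1$, $A=s_{-1}$). A minor further point: the upper bound $d_i\le p-1$ on the prefix is not part of the mountain condition; it is automatic, since a unit-step walk cannot exceed $p-1$ without first reaching it.
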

\begin{proof} Immediate from the definition, taking $R$ to be $s_1$ and $L$ to be $s_{-1}$ during $\overset{\leftarrow}{w}$, then $B$ to be $s_1$ and $A$ to be $s_{-1}$ during $\overset{\rightarrow}{w}$.
\end{proof}

\begin{figure}[H]
\centering
\begin{tikzpicture}[scale=0.85]
\def\R#1#2{\draw[very thick] (0.5*#1,0.5*#2) to (0.5*#1+0.5,0.5*#2+0.5); \draw (0.5*#1 +0.25,-2) node {R};}
\def\L#1#2{\draw[very thick] (0.5*#1,0.5*#2) to (0.5*#1+0.5,0.5*#2-0.5); \draw (0.5*#1 +0.25,-2) node {L};}
\def\B#1#2{\draw[very thick] (0.5*#1,0.5*#2) to (0.5*#1+0.5,0.5*#2+0.5); \draw (0.5*#1 +0.25,-2) node {B};}
\def\A#1#2{\draw[very thick] (0.5*#1,0.5*#2) to (0.5*#1+0.5,0.5*#2-0.5); \draw (0.5*#1 +0.25,-2) node {A};}
\draw[step=0.5,color=lightgray] (0,-1.5) grid (14.5,3.5);

\R{0}{0}\L{1}{1}\R{2}{0}\R{3}{1}\R{4}{2}\L{5}{3}\R{6}{2}\R{7}{3}\B{8}{4}\B{9}{5}\A{10}{6}\A{11}{5}\A{12}{4}\B{13}{3}\A{14}{4}\A{15}{3}\A{16}{2}\A{17}{1}\A{18}{0}\B{19}{-1}\A{20}{0}\A{21}{-1}\B{22}{-2}\B{23}{-1}\B{24}{0}\B{25}{1}\A{26}{2}\A{27}{1}\B{28}{0}

\draw[color=blue, very thick, dotted] (4,-2.25) to (4,3.5);
\draw[color=orange,very thick, dotted] (0,0) to (14.5,0);
\draw[color=orange,very thick, dotted] (0,2) to (14.5,2);
\end{tikzpicture}
\caption{A mountain of length $29$ and height $4$ drawn above its corresponding word. The dotted blue line marks the end of the $R,L$ part of the word, and the heights $0$ and $4$ are marked in orange.}
\label{mountain}
\end{figure}

\begin{mydef} A \textbf{plateau} of height $p-1$ is a walk on $\mathbb{N} \times \{0,1,\dots,p-1\}$ from $(0,0)$ to $(n,0)$ in steps of $s_0=(1,0),s_1=(1,1),$ and $s_{-1}=(1,-1)$, where we require that $s_0$ occurs only at height $0$ or $p-1$. We denote the set of plateaus of height $p-1$ by $Plat_{p-1}$.
\end{mydef}

\begin{prop} There is a length preserving bijection from $Plat_{p-1}$ to $L_{N_p}$.
\end{prop}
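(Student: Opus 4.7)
The plan is to define the bijection directly by translating each step of the walk to one of the four letters of $\Sigma$. First I would define the forward map $\Phi: Plat_{p-1} \to L_{N_p}$ by reading the walk from left to right: each up-step $s_1$ becomes \lcap, each down-step $s_{-1}$ becomes \rcap, a flat step $s_0$ at height $0$ becomes \dotmap, and a flat step $s_0$ at height $p-1$ becomes $*$. The key observation driving everything is that after reading the first $i$ letters of $\Phi(w)$, the depth $d_i(\Phi(w))$ equals the height of the walk after $i$ steps, since \lcap contributes $+1$ to depth, \rcap contributes $-1$, and both \dotmap and $*$ contribute $0$.

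Given this identification of depth with height, I would check that $\Phi(w)$ satisfies the three defining conditions of $L_{N_p}$. The constraint that the walk stays in $\{0,\dots,p-1\}$ and returns to $0$ at step $n$ yields condition $(1)$ directly. The plateau restriction that $s_0$ is allowed at height $0$ translates to \dotmap appearing only at depth $0$, which is condition $(2)$, and likewise $s_0$ at height $p-1$ corresponds to $*$ appearing only at depth $p-1$, giving condition $(3)$.

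For the inverse $\Psi: L_{N_p} \to Plat_{p-1}$, I would send \lcap to $s_1$, \rcap to $s_{-1}$, and both \dotmap and $*$ to $s_0$. The same depth-equals-height identity, read in reverse, ensures that conditions $(1)$ through $(3)$ on $w$ guarantee $\Psi(w)$ is a valid plateau (in particular, the $s_0$ steps produced by \dotmap and $*$ land at heights $0$ and $p-1$ respectively, the only heights allowed by the plateau definition). Both $\Phi$ and $\Psi$ preserve length by construction. They are mutually inverse because the one potential ambiguity, namely that \dotmap and $*$ both map to $s_0$, is resolved in $L_{N_p}$ by the rule that \dotmap can only occur at depth $0$ and $*$ only at depth $p-1$, so the composition $\Phi \circ \Psi$ recovers the original letter from the height at which the $s_0$ step occurs.

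No real obstacle is expected; the only step requiring any care is confirming the depth-height correspondence, and from there the verification of the three list conditions is essentially by inspection, mirroring the previous proposition on $Mount_{p-1}$.
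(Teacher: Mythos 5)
Your proposal is correct and is essentially the same bijection as the paper's: flat steps at height $0$ and $p-1$ correspond to \dotmap\hspace{1mm}and $*$ respectively, up-steps to \lcap, down-steps to \rcap, with the depth-equals-height observation resolving the only ambiguity in the inverse. You simply spell out the verification of the three defining conditions of $L_{N_p}$ more explicitly than the paper does.
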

\begin{proof} Send \dotmap\hspace{1.5mm} and $*$ to $s_0$, send \lcap\hspace{1.5mm}to $s_1$, and send \rcap\hspace{1.5mm}to $s_{-1}$. In the other direction we just need to distinguish $*$ from \dotmap, choosing $*$ if the height of the walk is currently $p-1$, and \dotmap\hspace{1.5mm} if it is $0$.
\end{proof}

\begin{figure}[H]

\centering
\begin{tikzpicture}[scale=0.85]
\def\up#1#2{\draw[very thick] (0.5*#1,0.5*#2) to (0.5*#1+0.5,0.5*#2+0.5); \draw (0.5*#1 +0.25,-0.3) node {\text{\lcap}};}
\def\down#1#2{\draw[very thick] (0.5*#1,0.5*#2) to (0.5*#1+0.5,0.5*#2-0.5); \draw (0.5*#1 +0.25,-0.3) node  {\text{\rcap}};}
\def\flatL#1#2{\draw[very thick] (0.5*#1,0.5*#2) to (0.5*#1+0.5,0.5*#2); \draw (0.5*#1 +0.25,-0.3) node  {\text{\dotmap}};}
\def\flatH#1#2{\draw[very thick] (0.5*#1,0.5*#2) to (0.5*#1+0.5,0.5*#2); \draw (0.5*#1 +0.25,-0.3) node {$*$};}

\clip (0,-0.5) rectangle (14.5,2.75);
\draw[step=0.5,color=lightgray] (0,0) grid (14.5,2.5);
\draw[dotted,color=blue,very thick] (4,-0.5) to (4,2.5);

\up{0}{0}\down{1}{1}\flatL{2}{0}\flatL{3}{0}\up{4}{0}\down{5}{1}\up{6}{0}\up{7}{1}\up{8}{2}\up{9}{3}\flatH{10}{4}\flatH{11}{4}\down{12}{4}\up{13}{3}\flatH{14}{4}\flatH{15}{4}\down{16}{4}\down{17}{3}\down{18}{2}\up{19}{1}\down{20}{2}\down{21}{1}\flatL{22}{0}\flatL{23}{0}\up{24}{0}\up{25}{1}\down{26}{2}\down{27}{1}\flatL{28}{0}
\end{tikzpicture}
\caption{A plateau of length $29$ and height $5$ drawn above its corresponding word.}
\label{plateau}
\end{figure}

The mountain and plateau of Figure \ref{mountain} and Figure \ref{plateau} above are in correspondence via the bijection in the proof of Proposition \ref{mountplat} below.

\begin{prop}\label{mountplat} There is a length preserving bijection between $L_{M_p}$ and $L_{N_p}$.
\end{prop}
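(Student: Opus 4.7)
The plan is to reduce, via the two preceding propositions, to constructing a length-preserving bijection between mountains and plateaus of height $p-1$. I would then extend the bijection $f : L_{M_0} \to L_{N_0}$ of Proposition \ref{aut0bij} by treating the ceiling at depth $p-1$ symmetrically to how $f$ treats the floor at depth $0$. In $f$, the symbol $\dotmap$ served as an absorbed up-step held at the floor; in the extension, the new symbol $*$ will serve as the analogous absorbed down-step held at the ceiling.

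Concretely, given a mountain, process its steps left-to-right and emit one plateau symbol per step. Each up-step ($R$ or $B$) is mapped to either $\lcap$ (a genuine $+1$ in the plateau) or $\dotmap$ (flat at the floor); each down-step ($L$ or $A$) is mapped to either $\rcap$ or $*$ (flat at the ceiling). The classification extends the depth-increasing rule that defined $f$: up-steps are paired with subsequent down-steps in such a way that the intervening plateau excursion fits inside $[0,p-1]$, and unpaired up/down-steps become $\dotmap$s and $*$s respectively.

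The verification has three parts. First, the resulting plateau stays in $[0,p-1]$, because $\dotmap$s absorb upward motion at the floor and $*$s absorb downward motion at the ceiling, preventing the plateau from exiting the strip. Second, the plateau returns to $0$, because the number of $\dotmap$s minus the number of $*$s equals the mountain's final depth, extending the identity from the analysis of $f$ (where the number of $\dotmap$s alone equals the final depth in the unconstrained case). Third, the map is invertible: from a plateau, recover the mountain by emitting an up-step for each $\lcap$ or $\dotmap$ and a down-step for each $\rcap$ or $*$, with the $R,L$-versus-$A,B$ phase labels determined by the plateau's first visit to depth $p-1$ (mirroring how the mountain's first phase ends at its first reach of $p-1$).

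The main obstacle will be formalizing the pairing rule so that the three properties above hold simultaneously and so that invertibility is manifest. A clean route is to define the pairing recursively on the mountain: identify the outermost up-down pair whose intervening excursion fits inside $[0,p-1]$, map it to the outermost $\lcap$-$\rcap$ pair of the plateau, and recurse on the inside and on the outside. This reduces the argument to an induction on length, with $f$ and its ceiling-dual handling the transitions at the floor and the ceiling respectively.
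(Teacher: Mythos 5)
Your overall architecture is the same as the paper's: reduce via the two preceding propositions to a bijection between $Mount_{p-1}$ and $Plat_{p-1}$, note that the plateau-to-mountain direction is just the forgetful map sending both \lcap\ and \dotmap\ to an up-step and both \rcap\ and $*$ to a down-step, and then invert this by deciding which steps of the mountain are absorbed flats. (One small correction to the easy direction: the $R,L$-versus-$A,B$ split must be read off from the \emph{mountain's} first visit to height $p-1$, not the plateau's; each \dotmap\ raises the mountain relative to the plateau, so for instance the plateau \dotmap\dotmap\ at $p=2$ never reaches height $1$ but its mountain $RB$ does after one step.) The paper performs the inversion by a right-to-left traversal with the *gap counter; you propose a left-to-right pairing rule, and that is where the proof is incomplete.

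The classification rule is never actually defined, and the natural candidate fails. Saying that up-steps are paired with later down-steps ``so that the intervening plateau excursion fits inside $[0,p-1]$'' is circular, since the plateau excursion only exists once the classification has been made; and the floor and ceiling classifications are interdependent, so they cannot be obtained from two independent order-theoretic criteria dual to the depth-increasing rule of Proposition \ref{aut0bij}. Concretely, take $p=3$ and the mountain $RRBABA$, i.e.\ the up/down word $UUUDUD$ with heights $1,2,3,2,3,2$; the unique compatible plateau is \dotmap\dotmap\lcap\rcap\lcap\rcap. The dual of ``depth-increasing'' would declare the final down-step a $*$ (vacuously, nothing after it rises above its level), but it must be a \rcap: if it were a $*$ there would be at most one \rcap, hence at most one \lcap\ (the plateau ends at height $0$), so the plateau could never reach height $p-1$ and no $*$ would be legal at all. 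This global obstruction is exactly what your ``outermost pair'' recursion would have to detect, and it is not clear that it does. The missing ingredient is that the reconstruction is deterministic \emph{right to left}: knowing the plateau height after step $i$, a down-step is forced to be $*$ precisely when that height is $p-1$ (else \rcap\ would require height $p$ before it) and \rcap\ otherwise, and an up-step is forced to be \dotmap\ precisely when that height is $0$ and \lcap\ otherwise. That observation -- equivalent to the paper's *gap bookkeeping -- together with the check that this backward reconstruction succeeds exactly when the walk is a mountain, is the actual content of the proposition and is absent from your plan.
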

\begin{proof} We show the bijection between mountains and plateaus. If our mountain/plateau never attains height $p-1$, $\overset{\rightarrow}{w}$ is empty, and we use the bijection in the proof of Proposition \ref{aut0bij}. Otherwise, we use the following process. On a plateau moving left to right, call the left endpoint of any step of type $s_0$ a \textbf{hinge}. Now swing up (replace $s_0$ with $s_1$, moving the rest of the plateau up $1$ unit) any hinge at height $0$, and swing down (replace $s_0$ with $s_{-1}$, moving the rest of the plateau down $1$ unit) any hinge at height $p-1$. This gives a mountain. In the opposite direction we need to be able to identify steps along the mountain which came from swinging a hinge. To do this, we traverse the mountain from right to left keeping a counter called \textbf{*gap} which starts at $0$. Any time we would move below our starting height, we swing that step up along its right endpoint and decrement *gap by $1$. Any time we would move to height $p$ we swing that step down along its right endpoint and increment *gap by $1$. After completing the $\overset{\rightarrow}{w}$ part of the mountain, we are at height $p-1$ with respect to the left end of the mountain. The value of *gap indicates how much above or below the left starting point the bottom of our plateau is, i.e. the plateau bottom minus *gap is the starting height of the mountain. So, we consider our height to be $p-1-$*gap, and use that to determine when to swing a hinge up when traversing $\overset{\leftarrow}{w}$. When we would move below the plateau bottom, swing that step up 1. By counting *gap we guarantee this is inverse to the process of going from a mountain to a plateau, and we have our bijection.

\begin{center}
\begin{tikzpicture}[scale=0.85]
\clip (0,-2.75) rectangle (14.5,3.75);
\draw[step=0.5,color=lightgray] (0,-2.5) grid (14.5,3.5);
\draw[dotted,color=blue,very thick] (4,-2.5) to (4,3.5);

\def\R#1#2{\draw[very thick] (0.5*#1,0.5*#2) to (0.5*#1+0.5,0.5*#2+0.5); \draw (0.5*#1 +0.25,3.25) node {R};}
\def\L#1#2{\draw[very thick] (0.5*#1,0.5*#2) to (0.5*#1+0.5,0.5*#2-0.5); \draw (0.5*#1 +0.25,3.25) node {L};}
\def\B#1#2{\draw[very thick] (0.5*#1,0.5*#2) to (0.5*#1+0.5,0.5*#2+0.5); \draw (0.5*#1 +0.25,3.25) node {B};}
\def\A#1#2{\draw[very thick] (0.5*#1,0.5*#2) to (0.5*#1+0.5,0.5*#2-0.5); \draw (0.5*#1 +0.25,3.25) node {A};}

\def\up#1#2{\draw[very thick] (0.5*#1,0.5*#2) to (0.5*#1+0.5,0.5*#2+0.5); \draw (0.5*#1 +0.25,-2.25) node {\text{\lcap}};}
\def\down#1#2{\draw[very thick] (0.5*#1,0.5*#2) to (0.5*#1+0.5,0.5*#2-0.5); \draw (0.5*#1 +0.25,-2.25) node  {\text{\rcap}};}
\def\flatL#1#2{\draw[very thick] (0.5*#1,0.5*#2) to (0.5*#1+0.5,0.5*#2); \draw (0.5*#1 +0.25,-2.25) node  {\text{\dotmap}};}
\def\flatH#1#2{\draw[very thick] (0.5*#1,0.5*#2) to (0.5*#1+0.5,0.5*#2); \draw (0.5*#1 +0.25,-2.25) node {$*$};}

\begin{scope}[color=blue]

\R{0}{-1}\L{1}{0}\R{2}{-1}\R{3}{0}\R{4}{1}\L{5}{2}\R{6}{1}\R{7}{2}\B{8}{3}\B{9}{4}\A{10}{5}\A{11}{4}\A{12}{3}\B{13}{2}\A{14}{3}\A{15}{2}\A{16}{1}\A{17}{0}\A{18}{-1}\B{19}{-2}\A{20}{-1}\A{21}{-2}\B{22}{-3}\B{23}{-2}\B{24}{-1}\B{25}{0}\A{26}{1}\A{27}{0}\B{28}{-1}
\end{scope}

\up{0}{0}\down{1}{1}\flatL{2}{0}\flatL{3}{0}\up{4}{0}\down{5}{1}\up{6}{0}\up{7}{1}\up{8}{2}\up{9}{3}\flatH{10}{4}\flatH{11}{4}\down{12}{4}\up{13}{3}\flatH{14}{4}\flatH{15}{4}\down{16}{4}\down{17}{3}\down{18}{2}\up{19}{1}\down{20}{2}\down{21}{1}\flatL{22}{0}\flatL{23}{0}\up{24}{0}\up{25}{1}\down{26}{2}\down{27}{1}\flatL{28}{0}

\end{tikzpicture}
\end{center}

\end{proof}

\begin{remark}
Being a bit loose with notation we can use the same visuals for the infinite automata. We have length preserving bijections showing $Plat_{\infty} \simeq L_{N_0} \simeq L_{M_0} \simeq Mount_{\infty}$. The proofs of the propositions of this section (when adjusted appropriately to make sense) exhibit these bijections.
\end{remark}

\subsection{Oriented Planar Algebras (OPAs)}
\label{sec:opas}
\def\marker{$\star$}

We start by describing oriented planar tangles (OPTs), which will be templates for combining elements of an oriented planar algebra. Each OPT is made up of input discs contained in an output disc, with oriented nonintersecting strands connecting the boundaries of these discs. Each disc has one boundary interval marked by $\star$ for rotational alignment. We consider two OPTs the same if there is an orientation preserving planar isotopy between them. The strand endpoints and orientations determine the \textbf{type} of a tangle, which we can specify as a list of input disc types and an output disc type. To define the type of an input disc, traverse its boundary counterclockwise from $\star$, labelling each strand intersection $+$ if the strand is incoming and $-$ if the strand is outgoing. We use the opposite orientation for the type of the output disc.
\begin{figure}[H]
\centering
\includegraphics{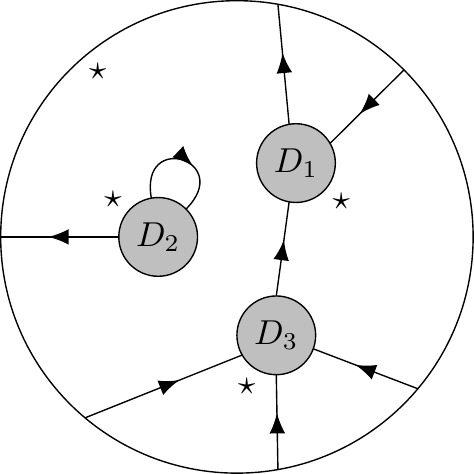}
\caption{An example of an OPT with output disc of type $(-,+,+,+,+,-)$, and input discs $D_1,D_2,$ and $D_3$ of types $(+,-,+),(-,+,-),$ and $(+,+,-,+)$ respectively.}
\label{fig:opt}
\end{figure}
The space of OPTs is a colored operad \cite{operads}, where composition is performed by inserting some tangle into a specified input disc of another tangle with each \marker \hspace{1mm}aligned. This composition is defined when the output type of one tangle is equal to the input type of the specified input disc of the other tangle.

\begin{figure}[H]
\centering
\includegraphics{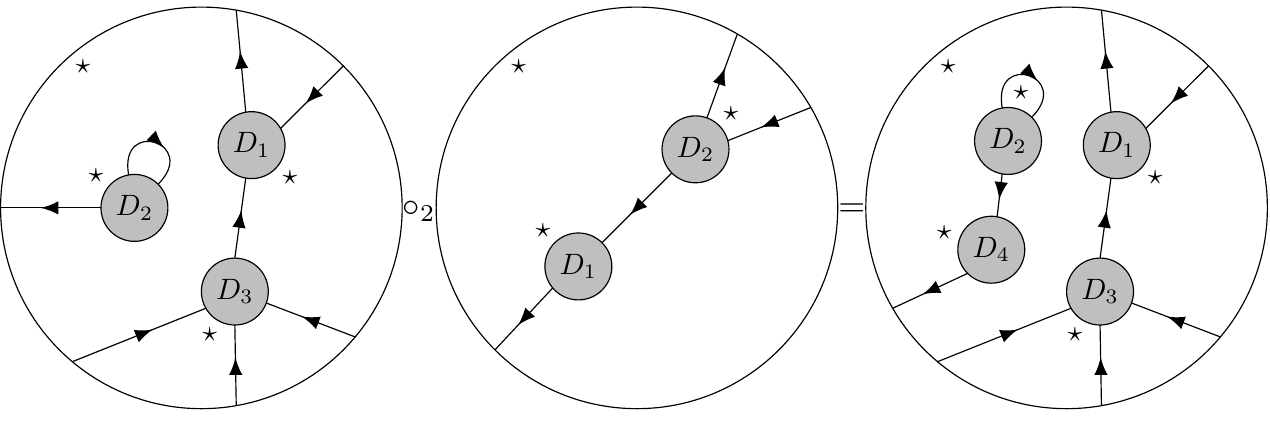}
\caption{Composition of two OPTs at the input disc $D_1$.}
\label{fig:optcomp}
\end{figure}

We give a topological definition of OPTs, and refer the reader to \cite{PA1}. We make use of the topological description while performing calculations and reasoning about oriented planar algebras.\newpage

\begin{mydef}
 An \textbf{oriented planar tangle (OPT)} consists of a smooth embedding $d:\underset{i=1..n}{\prod} D_i \rightarrow \text{int}(D_0)$ where $D_0,D_i \simeq D^2$, and a smooth embedding $s:\underset{i=1..n}{\prod} I_i \rightarrow \overline{D_0 - \coprod D_i}$ where $I_i \simeq D^1$ and we require that $s$ restricts to a map on the boundaries. The images $d |_{D_i}$ for $i=1,\dots,n$ are the \textbf{input discs}, and $D_0$ is the \textbf{output disc}. The images $s |_{I_i}$ are called \textbf{strands}, and the images of $\delta(I_i)$ are called the \textbf{strand ends}. For each disc $D_i$, the complement of the strand ends in $\delta(D_i)$ form path components, one of which is marked with (\marker) for rotational alignment. These tangles are defined up to orientation preserving planar isotopy of the strands and discs.
\end{mydef}

\begin{mydef}
An \textbf{oriented planar algebra (OPA)} is a collection of vector spaces indexed by $\underset{n \in \mathbb{N}}{\bigcup} \{+,-\}^n$ called \textbf{box spaces}, with a multilinear action by the operad of OPTs. 
\end{mydef}

This means that for each tangle $t$ of input type $(\sigma_1,\dots, \sigma_m)$ and output type $\sigma_0$ we need to define a linear map $t:\beta_{\sigma_1} \otimes \cdots \otimes \beta_{\sigma_m} \rightarrow \beta_{\sigma_0}$, e.g. for the tangle of Figure \ref{fig:opt} we would need to define a linear map $\beta_{(+,-,+)} \otimes \beta_{(-,+,-)} \otimes \beta_{(+,+,-,+)} \rightarrow \beta_{(-,+,+,+,+,-)}$.

We consider oriented Temperley-Lieb as a first example.

\subsubsection{OTL($\delta$)}
Consider the planar algebra of all formal linear combinations of oriented tangles with no input discs, and where the value of the circle with either orientation is $\delta$ (i.e. the oriented circle is equal to the empty tangle with coefficient $2$). We call this Oriented Temperley-Lieb at $\delta$ ($OTL(\delta)$), after the unoriented case first described by Temperley and Lieb in \cite{TL}, with diagrams introduced by Kauffman in \cite{kauffpoly} and the planar algebra structure implicit in \cite{Index}. We will denote the box spaces $\beta_\sigma$, indexed by elements $\sigma \in \{+,-\}^n$ where $n\in \mathbb{N}$. The index carries the information of the number of strands $n$, and the orientations of each strand at the disc boundary ordered counterclockwise from the \marker \hspace{1mm}($+$ for outgoing and $-$ for incoming). Explicitly, the box spaces are formal linear combinations of diagrams comprised of non-intersecting oriented arcs. For example, the box space $\beta_{(+,-,+,-)}$ is generated as a vector space by the following $2$ tangles.

\begin{figure}[H]
\centering
\includegraphics{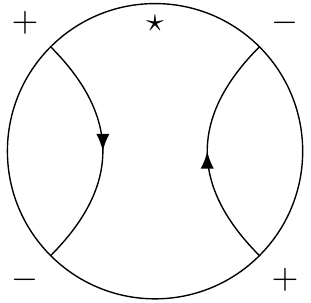} \hspace{10mm} \includegraphics{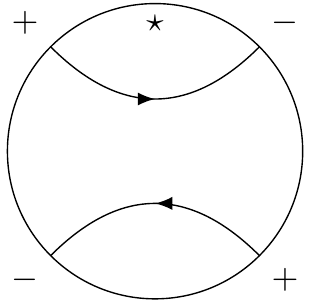}
\caption{A basis for the vector space $\beta_{(+,-,+,-)}$.}
\end{figure}

We combine elements of the OPA via composition in the operad of OPTs. Composition is performed by gluing diagrams into the input of a tangle and distributing whenever inputing a linear combination. Whenver an oriented circle appears, it is removed and the coefficient of that diagram is multiplied by $\delta$.

\begin{figure}[H]
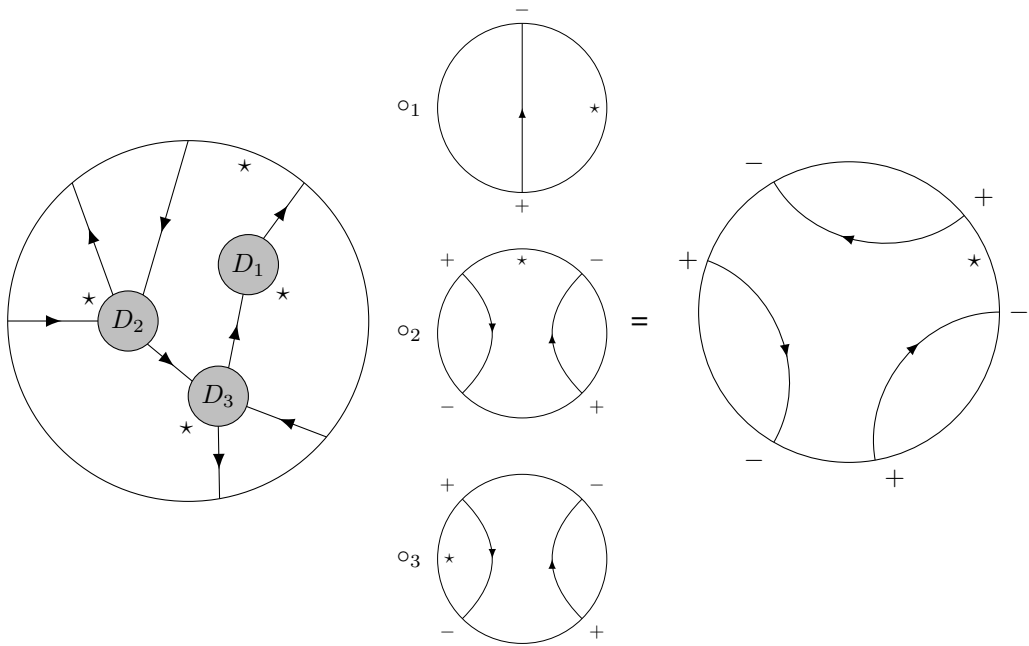

\centering

\vinclude{otlcomp1} \vinclude{otlcompins} $=$ \vinclude{otlcomp2}

\caption{The process and result of applying an element of the OPT to a tuple from $\beta_{(-,+)}\times \beta_{(+,-,+,-)} \times \beta_{(-,+,-,+)}$ to get an element of $\beta_{(+,-,+,-,+,-)}$.}
\end{figure}

\subsection{Oriented Symmetric Planar Algebras (OSPAs)}
\label{sec:ospas}
An oriented symmetric planar tangle (OSPT) is an output disc containing finitely many input discs, whose boundaries are connected by finitely many oriented and possibly transversely intersecting strands. 

\begin{mydef}
 An \textbf{oriented symmetric planar tangle (OSPT)} consists of an embedding $d:\underset{i=1..n}{\prod} D_i \rightarrow \text{int}(D_0)$ where $D_0,D_i \simeq D^2$, and an immersion $s:\underset{i=1..n}{\prod} I_i \rightarrow \overline{D_0 - \coprod D_i}$ where $I_i \simeq D^1$ and we require that $s$ restricts to a map on the boundaries. The images $d |_{D_i}$ for $i=1,\dots,n$ are the \textbf{input discs}, and $D_0$ is the \textbf{output disc}. The images $s |_{I_i}$ are called \textbf{strands}, and the images of $\delta(I_i)$ are called the \textbf{strand ends}. For each disc $D_i$, the complement of the strand ends in $\delta(D_i)$ form path components, one of which is marked with (\marker) for rotational alignment. We say two OSPTs are equivalent if one can be related to the other by any combination of regular isotopy (isotopy of immersions), naturality of input discs, and Reidemeister moves. We refer to this equivalence relation as \textbf{symmetric isotopy}.
\end{mydef}

\begin{figure}[H]
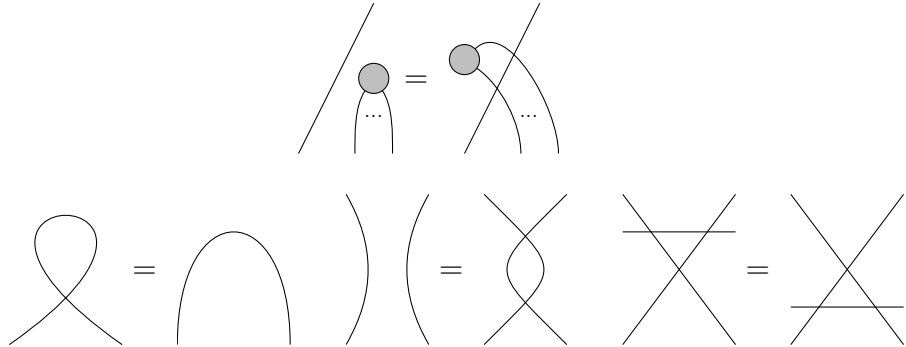

\centering
\vinclude{discnatural} = \vinclude{discnatural2} \vspace{5mm}

\hspace*{\fill}
\hfill \vinclude{r1_1} = \vinclude{r1_2} \hfill \vinclude{r2_1} = \vinclude{r2_2} \hfill \vinclude{r3_1} = \vinclude{r3_2}
\hspace*{\fill}
\caption{The first equality illustrates naturality of an input disc, and the other three equalities illustrate Reidemeister I, II, and III. These equalitities should hold for any choice of strand orientations.}
\label{fig:symiso}
\end{figure}

The space of OSPTs form an operad in the same way as the space of OPTs.
\begin{mydef}
An \textbf{oriented symetric planar algebra (OSPA)} is a collection of vector spaces indexed by $\underset{n \in \mathbb{N}}{\bigcup} \{+,-\}^n$ called \textbf{box spaces}, with a multilinear action by the operad of OSPTs. 
\end{mydef}

In an OSPA over the field $k$ there is a distinguished element of the $(+,+,-,-)$-box space called the \textbf{crossing}. We draw this element as a pair of crossing strands \includegraphics[scale=0.5]{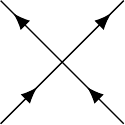} which must satisfy the Reidemeister moves and naturality. This follows from the action by the operad of OSPTs, as the tangle $t$ which is a pair of crossing strands defines a map $t:k \rightarrow \beta_{(+,+,-,-)}$. The crossing is taken to be the image $t(1)$.

\begin{lemma}
Any OSPA can be considered an OPA by restricting to the action of planar diagrams. To endow an OPA with the structure of an OSPA we can choose a crossing, and check that it satisfies naturality and the Reidemeister moves.
\end{lemma}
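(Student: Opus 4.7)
The plan is to handle the two directions of the biconditional separately; both follow from understanding how the operad of OPTs sits inside the operad of OSPTs and from a Reidemeister-style classification of OSPTs up to symmetric isotopy.

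For the first direction, I would observe that every OPT is automatically an OSPT: a smooth embedding of the strand intervals is a special kind of immersion, and orientation-preserving planar isotopy of OPTs is a special case of symmetric isotopy (no Reidemeister moves are ever needed, since there are no crossings to cancel or slide). Therefore the operad of OPTs injects into the operad of OSPTs as the suboperad of tangles whose strands are embedded. Given an OSPA $\{\beta_\sigma\}$ with its OSPT-action, simply restricting the multilinear action along this inclusion yields a well-defined multilinear action of the OPT-operad on the same box spaces, which is by definition an OPA.

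For the second direction, given an OPA with a distinguished element $c \in \beta_{(+,+,-,-)}$ that I call the crossing, I would extend the OPA action to all OSPTs as follows. Put an arbitrary OSPT $T$ into general position by symmetric isotopy, so every self-intersection of strands is a transverse double point. Cut $T$ along horizontal slices so that each slice contains either a piece of a planar tangle or a single isolated crossing; this decomposes $T$ as an iterated operadic composition of planar tangles and copies of the crossing disc. Define the action of $T$ to be the image of this composition under the OPA-action applied to the tuple of crossings inserted at the appropriate input discs.

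The main obstacle is well-definedness: I must show that two different general-position decompositions of the same symmetric-isotopy class yield the same element. Here I would invoke the Reidemeister theorem for oriented immersed tangle diagrams, which says two such diagrams are related by symmetric isotopy (regular isotopy plus input-disc naturality plus Reidemeister I, II, III) if and only if they can be connected by a finite sequence of planar isotopies, operadic associativity moves, naturality moves past input discs, and the three local Reidemeister moves. Each of these elementary moves preserves the OPA-action by hypothesis: planar isotopy and operadic relations hold because we have an OPA, naturality and the three Reidemeister moves hold by the hypothesis on $c$. The only delicate point is checking that naturality of the crossing past an input disc is itself a consequence of our assumptions; this reduces to pulling a crossing strand across the boundary of a disc, which can be carried out using Reidemeister II to cancel the artificial pair of crossings that appear on the far side, so no additional hypothesis is required. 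Once well-definedness is established, multilinearity of the extended action is automatic from multilinearity in each crossing slot of the OPA action, and the two constructions are clearly mutual inverses.
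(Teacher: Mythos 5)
Your argument is correct in substance and is essentially an expanded version of the paper's proof, which simply records that the statement follows from the definitions: the restriction direction is the observation that every OPT is an OSPT, and the extension direction is the standard decompose-into-slices construction with well-definedness checked against the generating moves of symmetric isotopy (planar isotopy, disc naturality, and Reidemeister I--III), each of which is preserved by hypothesis. Note that for the restriction direction you only need the inclusion of operads, not injectivity, so that claim is harmless but unnecessary.

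One remark in your last paragraph is wrong, though it does not damage the proof: naturality of the crossing past an input disc is \emph{not} a consequence of Reidemeister II. Reidemeister II only says the chosen crossing is an involution; sliding a strand past a disc labelled by an arbitrary element $f$ of a box space requires the identity of the form ``crossing composed with $f\otimes 1$ equals $1\otimes f$ composed with the crossing,'' which is an independent condition --- this is precisely why the lemma lists naturality as a separate thing to check alongside the Reidemeister moves, and one can choose crossings satisfying R1--R3 that fail it. Since you had already invoked naturality ``by the hypothesis on $c$,'' your well-definedness argument stands; you should simply delete the claim that ``no additional hypothesis is required.''
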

\begin{proof}
Follows from the definitions.
\end{proof}

\subsection{Rep$_k(G,V)$ as an OSPA}
\label{sec:repalg}
Fix a group $G$, a field $k$, and some object $V\in \text{Rep}_k(G)$, the category of $k$-linear representations of $G$. Consider $\text{Rep}_k(G,V)$, the full subcategory of $\text{Rep}_k(G)$ whose objects are finite tensor products of $V$ and $V^{\ast}$. From $\text{Rep}_k(G,V)$ we will define a planar algebra $\mathcal{P}(\text{Rep}_k(G,V))$, and from this construct a category $\mathcal{C}(\mathcal{P}(\text{Rep}_k(G,V)))$ equivalent to $\text{Rep}_k(G,V)$.

\begin{mydef} 
Let $\mathcal{P}(\text{Rep}_k(G,V))$ be the OSPA whose box spaces are the morphism spaces of $\text{Rep}_k(G,V)$. 
\end{mydef}

For this to be an OSPA we need to show there is an action by the space of OPTs on the morphism spaces of $\text{Rep}_k(G,V)$. In this direction we first explain how to interpret an OPT with no input discs as a map of representations. We will use the following definition.

\begin{mydef} Given a sequence $\sigma = (\sigma_i) \in \{+,-\}^n$, define $V^{\otimes \sigma} = \bigotimes_{\sigma} V^{\sigma_i}$, where $V^{+}=V$ and $V^{-}=V^{\ast}$. For any such $\sigma$, define the dual to be the negative reversed sequence $\sigma^{\ast}=(-\sigma_n,\dots,-\sigma_1)$ where $-+=-$ and $--=+$.
\end{mydef}

Every OPT has a sequence of oriented points $\sigma$ along its boundary. We treat positively oriented points as copies of $V$, negatively oriented points as copies of $V^\ast$, and a tangle with no input discs as a morphism in $Hom(V^\sigma , \mathbbm{1})$. A tangle with no input discs is comprised of (possibly crossing) strands connecting pairs of boundary points. We then need to be able to interpret arcs and strand crossings as morphisms in $\text{Rep}_k(G,V)$. For any group $G$ and representation $V$, we have $G$-invariant evaluation, coevaluation, and crossing maps $\epsilon: V^{\ast} \otimes V \rightarrow \mathbbm{1}$, $\eta: \mathbbm{1} \rightarrow V \otimes V^{\ast}$, $\tau_{A,B}: A \otimes B \rightarrow B \otimes A$, as well as the identity maps on $V$ and $V^\ast$.

\begin{figure}[H]
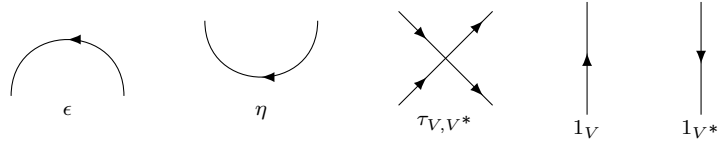

\hspace*{\fill}
\hspace{10mm} $\underset{\epsilon}{\vinclude{cap}}$ \hfill  $\underset{\eta}{\vinclude{cup}}$ \hfill  $\underset{\tau_{V,V^\ast}}{\vinclude{cross}}$ \hfill $\underset{1_V}{\vinclude{idmap}}$ \hfill  $\underset{1_{V^{\ast}}}{\vinclude{dualidmap}}$\hspace{10mm}
\hspace*{\fill}

\caption{Reading upwards, interpret each diagram above as the morphism it is labelled by.}
\end{figure}

\begin{remark} There are really two evaluation (coevaluation) maps depending on the orientation of the arc. In abuse of notation we call these both $\epsilon$ ($\eta$) regardless of orientation, noting that in the opposite orientation we are interpreting the diagram by pre (post) composing with the crossing map.
\end{remark}

The OSPT is supposed to be defined up to symmetric isotopy, so the interpretation of a tangle as a morphism shouldn't change under planar isotopy or Reidemeister moves. Checking that Reidemeister moves hold is straightforward. Planar isotopy follows from the equalities of morphisms $1_V = (1_V \otimes \epsilon) \circ (\eta \otimes 1_V) = (\epsilon \otimes 1_V) \circ (1_V \otimes \eta)$, and similarly for $1_{V^{\ast}}$.

\begin{figure}[H]
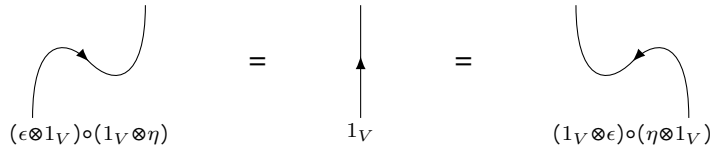

\hspace*{\fill}
\hspace{10mm} $\underset{(\epsilon \otimes 1_V) \circ (1_V \otimes \eta)}{\vinclude{snakeright}}$ \hfill $=$ \hfill $\underset{1_V}{\vinclude{idmap}}$ \hfill $=$ \hfill $\underset{(1_V \otimes \epsilon) \circ (\eta \otimes 1_V)}{\vinclude{snakeleft}}$ \hspace{10mm}
\hspace*{\fill}

\caption{The above planar isotopic diagrams give the same map of representations}
\end{figure}

Interpret gluing of strands to be composition and placing two morphisms next to each other to be the $\otimes$ product of those morphisms, giving an action of the operad of OPTs on the box spaces.

\begin{figure}[H]
\centering
\includegraphics{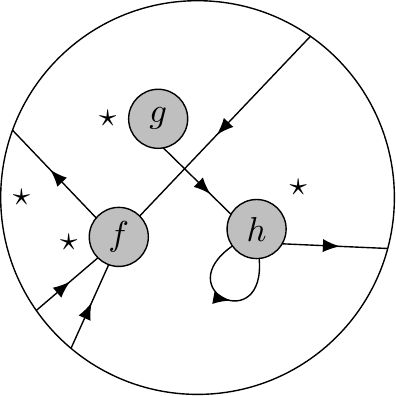}
\caption{The above diagram is the result in $Hom(V^{(+,+,-,+,-)}, \mathbbm{1})$ of the action of an OPT on $f \otimes g \otimes h \in Hom(V^{(+,+,+)},V) \otimes Hom(V^{(-)},\mathbbm{1}) \otimes Hom(V^{(-,+)},V^{(-,+)})$.}
\end{figure}

To illustrate we follow a possible set of steps in computing the value of a vector under the map determined by the diagram in the figure above. We will denote the output components of a map by indices, so that for example $h$ has two components $h_1$ and $h_2$. Take the vector $w_0 \otimes w_1 \otimes 1  \otimes \phi_2 \otimes w_3 \otimes \phi_4$ where we have included a copy of $k$ in the third factor so that we can perform the coevaluation at the inputs of $h$. For simplicity suppose $V$ is two dimensional with basis $(v_0,v_1)$. Generally we have $\eta(1)= \underset{v_i \in X}{\sum} v_i^\ast \otimes v_i$ where $X$ is a basis for $V$.

\begin{enumerate}
\item Start. $$w_0 \otimes w_1 \otimes 1 \otimes \phi_2 \otimes w_3 \otimes \phi_4$$
\item Apply coevaluation to $1$. $$w_0 \otimes w_1 \otimes (v_0^\ast \otimes v_0 + v_1^\ast \otimes v_1) \otimes \phi_2 \otimes w_3 \otimes \phi_4$$
\item Apply $h$ to the result of coevaluation from the prior step.
\begin{eqnarray*} &&w_0 \otimes w_1 \otimes (h_1(v_0^\ast \otimes v_0) \otimes h_2(v_0^\ast \otimes v_0) \\ 
  &&+ h_1(v_1^\ast \otimes v_1) \otimes h_2(v_1^\ast \otimes v_1)) \otimes \phi_2 \otimes w_3 \otimes \phi_4 \end{eqnarray*}
\item Apply $g$ to the first output factor of $h$.
\begin{eqnarray*} &&w_0 \otimes w_1 \otimes (g(h_1(v_0^\ast \otimes v_0)) \otimes h_2(v_0^\ast \otimes v_0) \\ &&+ g(h_1(v_1^\ast \otimes v_1)) \otimes h_2(v_1^\ast \otimes v_1)) \otimes \phi_2 \otimes w_3 \otimes \phi_4\end{eqnarray*}
\item Apply evaluation to the second output factor of $h$ and $\phi_2$.
\begin{eqnarray*} &&w_0 \otimes w_1 \otimes (g(h_1(v_0^\ast \otimes v_0)) \otimes \phi_2(h_2(v_0^\ast \otimes v_0)) \\&&+ g(h_1(v_1^\ast \otimes v_1)) \otimes \phi_2(h_2(v_1^\ast \otimes v_1)) \otimes w_3 \otimes \phi_4\end{eqnarray*}
\item Apply $f$ to $w_0 \otimes w_1 \otimes w_3$ .
\begin{eqnarray*}&&f(w_0 \otimes w_1 \otimes w_3) \otimes (g(h_1(v_0^\ast \otimes v_0)) \otimes \phi_2(h_2(v_0^\ast \otimes v_0)) \\&&+ g(h_1(v_1^\ast \otimes v_1)) \otimes \phi_2(h_2(v_1^\ast \otimes v_1)) \otimes \phi_4\end{eqnarray*}
\item Apply evaluation to the output  of $f$ and $\phi_4$, giving a product in $k$.
\begin{eqnarray*}&&\phi_4(f(w_0 \otimes w_1 \otimes w_3)) \cdot (g(h_1(v_0^\ast \otimes v_0)) \otimes \phi_2(h_2(v_0^\ast \otimes v_0)) \\&&+ g(h_1(v_1^\ast \otimes v_1)) \cdot \phi_2(h_2(v_1^\ast \otimes v_1))\end{eqnarray*}
\end{enumerate}

We made implicit use of the isomorphisms $V \otimes k \simeq V$ and $k \otimes k \simeq k$ in the calculation above. From commutativity of $k$ and the relation in figure 1.11 any way of making this computation will yield the same result, i.e. the diagram gives a well defined morphism.

From $\mathcal{P}(\text{Rep}_k(G,V))$ we now want to build a category equivalent to $\text{Rep}_k(G,V)$. 

\begin{mydef}
Let $\mathcal{C}(\mathcal{P}(\text{Rep}_k(G,V)))$ be the category whose objects are sequences $ \sigma \in \{+,-\}^n$, and where the morphism space $Hom(\alpha,\beta)$ is given by the $\alpha \cdot \beta^\ast$ box space of $\mathcal{P}(\text{Rep}_k(G,V))$. The $\otimes$ operation and composition in this category come from the action of the operad of OPTs. 
\end{mydef}

\begin{prop}
Define a functor $F:\mathcal{C}(\mathcal{P}(\text{Rep}_k(G,V))) \rightarrow \text{Rep}_k(G,V)$ on objects by $F(\sigma)=V^\sigma$  with $F(())=\mathbbm{1}$. Define $F$ on morphisms by $F(d)=f \in Hom(V^{\gamma},\mathbbm{1})$, where $\gamma$ is the index of the box space containing $d$, and $f$ is the morphism interpreted by the diagram $d$. $F$ is an equivalence of tensor categories.
\end{prop}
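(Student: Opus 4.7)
The plan is to verify, in order, that $F$ is essentially surjective, that it is a well-defined tensor functor, and that it is fully faithful. Essential surjectivity is immediate from the definition of $\text{Rep}_k(G,V)$: its objects are by construction finite tensor products of $V$ and $V^*$, so each is of the form $V^\sigma$ for a sign sequence $\sigma$, and $F(\sigma)=V^\sigma$ hits it on the nose.

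For functoriality, I would first rewrite the morphism spaces on the source side. By definition $\mathrm{Hom}_{\mathcal{C}(\mathcal{P})}(\alpha,\beta) = \beta_{\alpha\cdot\beta^*} = \mathrm{Hom}_{\text{Rep}_k(G,V)}(V^{\alpha\cdot\beta^*}, \mathbbm{1})$, so to view a diagram as a morphism $V^\alpha \to V^\beta$ one applies the standard duality isomorphism $\mathrm{Hom}(V^\alpha \otimes V^{\beta^*}, \mathbbm{1}) \cong \mathrm{Hom}(V^\alpha, V^\beta)$ built from evaluations and coevaluations of $V$. Because this isomorphism is natural, the real content is that the two OPT operations --- stacking and horizontal juxtaposition --- are transported by $F$ to composition and tensor product of morphisms. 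Tensor compatibility is immediate from how the OPT action was defined. Composition compatibility is the statement that joining the output strands of one diagram to the input strands of another via appropriately bent caps corresponds to composing the underlying maps, which follows from the snake identities $(1_V \otimes \epsilon)\circ(\eta\otimes 1_V)=1_V$ and $(\epsilon \otimes 1_V)\circ(1_V \otimes \eta)=1_V$ already invoked in the discussion preceding the proposition. Preservation of identities is a special case.

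Fully faithfulness is then automatic: the induced map $\mathrm{Hom}_{\mathcal{C}(\mathcal{P})}(\alpha,\beta) \to \mathrm{Hom}(V^\alpha, V^\beta)$ is the composite of a definitional equality with the duality isomorphism, hence a bijection. The main obstacle is not mathematical but notational: one must fix a careful convention for reading off the boundary sequence of a disc from its $\star$ marker (including the opposite orientation convention for the output disc), and verify that the elementary generators (cup, cap, crossing, identity) on the planar side correspond to $\eta, \epsilon, \tau, 1$ on the representation side in a way compatible with Reidemeister moves, naturality of input discs, and planar isotopy. All of these relations were checked in the preceding subsection, so once the dictionary is pinned down the equivalence follows by systematic verification.
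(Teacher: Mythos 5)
Your proposal is correct and follows essentially the same route as the paper, whose entire proof is the observation that $Hom(A,B) \simeq Hom(A \otimes B^\ast, \mathbbm{1})$ naturally in both arguments; you have simply unpacked that one line into the explicit checks of essential surjectivity, compatibility of stacking and juxtaposition with composition and $\otimes$ via the snake identities, and full faithfulness from the duality isomorphism. No gaps — yours is a fleshed-out version of the paper's argument rather than a different one.
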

\begin{proof}
Follows from the fact that in our category $Hom(A,B) \simeq Hom(A \otimes B^\ast, \mathbbm{1})$ naturally in both arguments.
\end{proof}

 In all examples discussed in this thesis $V\simeq V^{\ast}$, so we fix an isomorphism $\varphi:V \rightarrow V^{\ast}$ and include a vertex $\varphi$ of degree $(+,+)$. Using $\varphi$ along with evaluation we can build the isomorphism $h:\text{Hom}(V^{\otimes n},V^{\otimes m}) \rightarrow \text{Hom}(V^{\otimes (n+m)}, \mathbbm{1})$, allowing us to consider only the box spaces $\beta_{(+,+,\dots,+)}$ of all positive index. We define $\beta_\smallblacksquare(n):=\beta_{(+,+,\dots,+)}$ where the index is a tuple of length $n$.
\begin{center}
\begin{tikzpicture}[x=0.75cm,y=0.75cm]
\clip (-2.8,-.1) rectangle (16,3.4);
\begin{scope}[decoration={
    markings,
    mark=at position 0.5 with {\arrow[scale=1.25]{latex}}}
    ]

\draw (2.25,1) rectangle (3.75,1.75);
\draw (3, 1.325) node {$f$};

\draw (2.25,1.375) node[anchor=east] {$\text{Hom}(V^{\otimes n}, V^{\otimes m}) \ni $};

\draw [|->,>=Latex] (4.25,1.375) to (6.25,1.375);

\draw (5.25,1.65) node {$h$};

\draw (8.75,1) rectangle (10.25,1.75);

\draw (9.5, 1.325) node {$f$};

\draw (6.75,1.125) rectangle (7.25,1.625);
\draw (7.75,1.125) rectangle (8.25,1.625);

\draw (7,1.35) node {$\varphi$};
\draw (8,1.35) node {$\varphi$};

\draw [-,postaction={decorate}] (9,1.75) to [controls=+(90:0.7) and +(90:0.7)] (8,1.625);
\draw [-,postaction={decorate}] (10,1.75) to [controls=+(90:1.7) and +(90:1.7)] (7,1.625);

\foreach \x in {2.5,3.5,9,10}
	\draw [-,postaction={decorate}] (\x,0) to (\x,1);
	
\foreach \x in {7,8}
	\draw [-,postaction={decorate}] (\x,0) to (\x,1.125);

\foreach \x in {2.5,3.5}
	\draw [-,postaction={decorate}] (\x,1.75) to (\x,2.75);

\draw (10.25,1.375) node[anchor=west] {$ \in \text{Hom}(V^{\otimes (n+m)},\mathbbm{1})$};


\end{scope}
\end{tikzpicture}
\end{center}

For a more in depth and general discussion of graphical languages and the relevant theorems and proofs, see \cite{graphical} along with the references to Joyal and Streets works within. This is all to say that we can understand $\text{Rep}_k(G,V)$ through the planar algebra $\mathcal{P}(\text{Rep}_k(G,V))$, and giving a presentation of $\mathcal{P}(\text{Rep}_k(G,V))$ is equivalent to a presentation of the category $\text{Rep}_k(G,V)$.

\begin{remark}
More generally the construction of this section works in any symmetric tensor category (see \cite{TensorCats} for definitions related to tensor categories) where $Hom(A,B) \simeq Hom(A \otimes B^\ast, \mathbbm{1})$, but in the scope of this thesis we only use categories of the form $Rep_k(G,V)$.
\end{remark}

\subsection{OSPAs generated by a set of vertices}
\label{sec:ospaverts}
Here we will discuss what it means to give a presentation of a planar algebra by generators and relations. We first introduce the planar algebra freely generated by certain elements (which we conventionally call vertices). We then describe what it means to take a quotient of a planar algebra by certain relations.

\begin{mydef} A \textbf{vertex} consists of a labelled disc, and a sequence of points along the boundary of that disc called the \textbf{degree}. In the context of oriented planar algebras and tangles we require the degree to be a sequence of oriented points. We mark the boundary interval between the first and last term of the degree with a $\star$.
\label{def:verts}
\end{mydef}

\begin{figure}[H]
\centering
\includegraphics{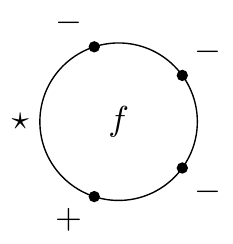}
\caption{An oriented vertex $f$ of degree $(+,-,-,-)$}
\end{figure}

\begin{mydef}
Let $S$ be a set of vertices. We say a vertex is \textbf{compatible} with an input disk of an OSPT when the degree is equal to the type of the input disc, i.e. a vertex and input disc are compatible when the sequences of oriented points along their boundaries are the same. The OSPA freely generated by $S$ is the collection of linear combinations of OSPTs where the input discs have been labelled with compatible vertices. The operad of OSPTs acts on such diagrams through the action on the underlying tangles.
\end{mydef}

Suppose for example we take the vertex set $G=\{f_{(+,+,+,-)},g_{(-)}, h_{(+,-,+,-)} \}$, denoting each vertex by its label with its degree written in the subscript. A pair of equivalent elements of the OSPA freely generated by $G$ are drawn in the figure below.

\begin{figure}[H]
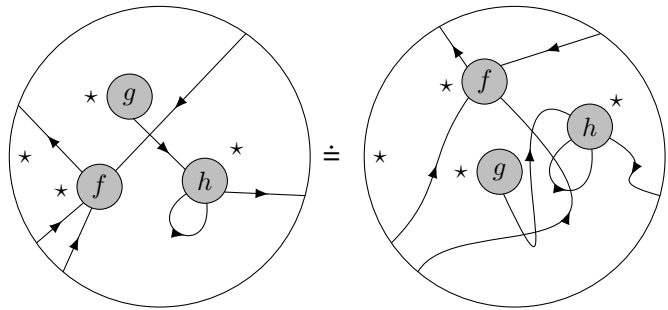

\centering
\vinclude{optaction} $\mathlarger{\doteq}$ \vinclude{optactioneq}

\caption{Two elements of the OSPA generated by $G$, equivalent via symmetric isotopy.}
\end{figure}

We may have relations we want to impose on an OSPA, and to do this we define a planar ideal and the quotient of an OSPA by a planar ideal.

\begin{mydef}
\label{def:quotient}
A \textbf{planar ideal} $E$ in an (OS)PA $P$ is a collection of vector subspaces of the box spaces of $P$ closed under the action of the operad of (OS)PTs, i.e. if any input disc of some (OS)PT is labelled by an element of $E$ and the others are labelled by elements of $P$ then the result is in $E$. The \textbf{quotient} of an (OS)PA $G$ by $E$ is the collection of equivalence classes modulo $E$, and is denoted $G/E$.
\end{mydef}

A planar algebra which is a quotient of the free planar algebra generated by some set of vertices by a planar ideal will be called a \textbf{diagrammatic planar algebra}. Our goal is to give such a diagrammatic presentation of $\text{Alg}_\smallblacksquare = \mathcal{P}(\text{Rep}_k(G,V))$, i.e. to give an isomorphism of planar algebras $T:\text{Diag}_{\smallblacksquare} \rightarrow \text{Alg}_\smallblacksquare$ for some diagrammatic planar algebra $\text{Diag}_{\smallblacksquare}$.

\begin{remark}
To simplify diagrams and notation, instead of using labelled discs for vertices we will be using symbols like \textbf{[} or $\bullet$.
\end{remark}





\subsubsection{DTL$(2)$}
\label{sec:dtl2}
As an example of an OSPA of the form $G/E$ we look at Disoriented Temperley-Lieb at the value $2$. For simplicity, we use the symbols \textbf{]} and \textbf{[} for our generating vertices in place of labelled discs. We assume $\star$ is to the left of these symbols in the images below.

\begin{mydef} Define Disoriented Temperley-Lieb at the value $2$, denoted $DTL(2)$, to be the OSPA quotient $G/E$ for the generating set of vertices $G_i$ and relations $E_i$ below.

\hspace*{\fill}

\hspace{40mm} $G_1:$\vinclude{bracketmap} \hfill $G_2:$\vinclude{bracketinvmap} \hspace{40mm}

\hspace*{\fill}

\hspace*{\fill}

\hspace{5mm} \vinclude{ccw_circ} \hspace{0.05mm} $\overset{\scriptscriptstyle{E_1}}{=} 2$ \hfill \vinclude{bracketflip} $\overset{\scriptscriptstyle{E_2}}{=} -$\vinclude{bracketmap} \hfill \vinclude{bracketpair} $\overset{\scriptscriptstyle{E_3}}{=}$ \vinclude{idmap} \hfill \vinclude{crossup} $\overset{\scriptscriptstyle{E_4}}{=}$ \vinclude{idmap} \hspace{1.5mm} \vinclude{idmap} $+$ \vinclude{capcup} \hspace{5mm}

\hspace*{\fill}

\end{mydef}
 Let $G=SL_2(\mathbb{C}),k=\mathbb{C},$ and let $V$ be the standard two-dimensional representation of $G$ with basis $v_0=(1,0)$ and $v_1=(0,1)$. It is well known (with proof and exposition in \cite{SymmBook}) that all maps between $\otimes$ products of $V$ and $V^\ast$ are given by some combination of identity maps, evaluation, coevaluation, and the determinant. $V$ is self-dual, and we specify the isomorphism $\varphi: V \rightarrow V^{\ast}$ defined by $\varphi(v_0)=v_1^\ast, \varphi(v_1)=-v_0^\ast$, which also lets us express the determinant map $det:V \otimes V \rightarrow \mathbb{C}$ since $det = \epsilon \circ (\varphi \otimes 1_V)$.

\begin{prop}\label{prop:dtl2} There is a map of planar algebras $$T:DTL(2) \rightarrow \mathcal{P}(\text{Rep}_{\mathbb{C}}(SL_2(\mathbb{C}),V))$$ defined by $T(G_1)=\varphi,T(G_2)=\varphi^{-1}$.
\end{prop}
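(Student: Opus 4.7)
The plan is to define $T$ first on the free OSPA generated by the vertices $\{G_1, G_2\}$: each labelled disc is sent to its assigned morphism ($G_1 \mapsto \varphi$, $G_2 \mapsto \varphi^{-1}$), and the OPT-operad action on $\mathcal{P}(\mathrm{Rep}_{\mathbb{C}}(SL_2(\mathbb{C}),V))$ from Section \ref{sec:repalg} (strands as identities on $V$ or $V^*$ depending on orientation, caps and cups as $\epsilon$, $\eta$) extends this to all labelled diagrams. By Definition \ref{def:quotient}, to see that $T$ descends to the quotient $DTL(2)=G/E$ it suffices to check that the four generating relations $E_1,E_2,E_3,E_4$ hold after applying $T$; closure of the planar ideal generated by them then forces $T$ to annihilate every element of that ideal.

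The relations $E_1$ and $E_3$ are essentially formal: the counterclockwise oriented circle corresponds to $\mathrm{tr}(1_V)=\dim_{\mathbb{C}}V=2$, and the bracket-pair $E_3$ is the composition $\varphi^{-1}\circ\varphi=1_V$. Relation $E_2$ records skew-symmetry of $\varphi$: rotating the bracket by $\pi$ replaces $\varphi$ with its transpose under the canonical identification $V\simeq V^{**}$, and from the explicit formulas $\varphi(v_0)=v_1^{*}$, $\varphi(v_1)=-v_0^{*}$ one reads off $\varphi(v_0)(v_1)=1=-\varphi(v_1)(v_0)$. Thus $\varphi$ is the $SL_2(\mathbb{C})$-invariant symplectic form, which is exactly the data making $V\simeq V^{*}$ as $SL_2(\mathbb{C})$-modules.

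The one non-formal step is $E_4$. The cleanest approach is to use the decomposition $V\otimes V\simeq\mathrm{Sym}^2 V\oplus\Lambda^2 V$ as $SL_2(\mathbb{C})$-modules: the swap $\tau_{V,V}$ acts as $+1$ on $\mathrm{Sym}^2 V$ and $-1$ on $\Lambda^2 V$, so $\tau_{V,V}=1_{V\otimes V}-2P_{\Lambda^2 V}$. The capcup diagram, interpreted via $\varphi,\varphi^{-1},\epsilon,\eta$, is an $SL_2(\mathbb{C})$-equivariant rank-one endomorphism factoring through the trivial representation, so by Schur its image is forced to be $\Lambda^2 V$ and it must equal a scalar multiple of $P_{\Lambda^2 V}$. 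A short basis computation on $\{v_i\otimes v_j\}$ pins the scalar down to $-2$, giving exactly $\tau_{V,V}=1_{V\otimes V}+\text{capcup}$. The hardest part is pure bookkeeping: one must track the signs introduced by $\varphi$ as it is threaded through $\epsilon\circ(\varphi\otimes 1_V)$ and $(1_V\otimes\varphi^{-1})\circ\eta$ so that the capcup evaluates with coefficient $+1$ rather than some other nonzero scalar. Once normalisations are consistent, all four relations are verified and $T$ extends to a well-defined planar algebra homomorphism on $DTL(2)$.
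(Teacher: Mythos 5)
Your proposal is correct and follows the same overall strategy as the paper: define $T$ on the free OSPA by the assignment on generators, then check that the images of the four generating relations hold, which by the planar-ideal definition of the quotient is all that is needed for $T$ to descend to $DTL(2)$. The only real divergence is in how the two nontrivial relations are verified. For $E_2$ you invoke the general fact that the diagrammatic rotation of $\varphi$ is its transpose under $V\simeq V^{\ast\ast}$ and then cite skew-symmetry of the associated form; the paper instead verifies the same identity by the explicit cup/cap computation $(1_{V^{\ast}}\otimes\epsilon)(1_{V^{\ast}}\otimes\varphi\otimes 1_V)(\eta\otimes 1_V)=-\varphi$ on the basis, which is exactly the content your appeal to the transpose presupposes with the paper's chosen $\epsilon,\eta$. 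For $E_4$ you use $V\otimes V\simeq \mathrm{Sym}^2V\oplus\Lambda^2V$, write $\tau_{V,V}=1-2P_{\Lambda^2 V}$, and argue by Schur that the capcup is a scalar multiple of $P_{\Lambda^2 V}$, pinning the scalar $-2$ by one basis evaluation; the paper simply evaluates the capcup on all four basis vectors and adds the identity. The two routes deliver the same verification: yours explains structurally why the relation can only fail by a scalar and isolates the sign bookkeeping in a single evaluation, while the paper's direct computation is shorter and makes no appeal to the decomposition of $V\otimes V$. Note finally that the paper's printed proof of this proposition continues beyond the stated claim to show that $T$ is in fact an isomorphism (the diagrams $X_n$, the Catalan-number dimension count, and the spanning argument); since the statement as worded only asserts the existence of the map, the absence of that part from your proposal is not a gap.
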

\begin{proof}
To show $T$ is a well-defined map we need to show the $E_i$ all hold in the image of $T$.

\begin{enumerate}[$E_1$:]
\item $\epsilon \cdot \eta(1) = \epsilon(v_1^{\ast} \otimes v_1 + v_0^{\ast} \otimes v_0) = 2$
\item This follows from rotation of $G_1$ and computing its value on a basis:

\begin{center}
\begin{tikzpicture}
\clip (3,0) rectangle (6.5,2);

\begin{scope}[decoration={
	markings,
	mark=at position 0.5 with {\arrow[scale=1.25,thick]{]}},
	mark=at position 0.8 with {\arrow[scale=1.25,>=latex]{<}},
	mark=at position 0.25 with {\arrow[scale=1.25,>=latex]{>}}}
	]

\draw [-,postaction={decorate}] (3.5,0.25) to (3.5,1.75);

\draw (3,0.25) to [controls= +(270:0.3) and +(-90:0.3)] (3.5,0.25);
\draw (3.5,1.75) to [controls= +(90:0.3) and +(90:0.3)] (4,1.75);

\draw (3,2) to (3,0.25);
\draw (4,1.75) to (4,0);

\draw (5,1) node {$=$};

\draw [-,postaction={decorate}] (6,2) to (6,0);

\end{scope}
\end{tikzpicture}
\end{center}
\begin{align*}
(&1_{V^{\ast}}\otimes \epsilon)(1_{V^\ast} \otimes \varphi \otimes 1_V)(\eta \otimes 1_V)(v_0) & &= \\
(&1_{V^{\ast}}\otimes \epsilon)(1_{V^\ast} \otimes \varphi \otimes 1_V)(v_0^\ast \otimes v_0 \otimes v_0 + v_1^\ast \otimes v_1 \otimes v_0) & &= \\
(&1_{V^{\ast}}\otimes \epsilon)(v_0^\ast \otimes v_1^\ast \otimes v_0 - v_1^\ast \otimes v_0^\ast \otimes v_0) & &= -v_1^\ast \\
\end{align*}
\begin{align*}
(&1_{V^{\ast}}\otimes \epsilon)(1_{V^\ast} \otimes \varphi \otimes 1_V)(\eta \otimes 1_V)(v_1) & &= \\
(&1_{V^{\ast}}\otimes \epsilon)(1_{V^\ast} \otimes \varphi \otimes 1_V)(v_0^\ast \otimes v_0 \otimes v_1 + v_1^\ast \otimes v_1 \otimes v_1) & &= \\
(&1_{V^{\ast}}\otimes \epsilon)(v_0^\ast \otimes v_1^\ast \otimes v_1 - v_1^\ast \otimes v_0^\ast \otimes v_1) & &= v_0^\ast\\
\end{align*}
\item $\varphi \cdot \varphi^{-1} = 1_{V^{\ast}}$
\item The non-identity term on the right hand side takes the values $v_0 \otimes v_0 \mapsto 0, v_1 \otimes v_1 \mapsto 0, v_0 \otimes v_1 \mapsto v_1 \otimes v_0 - v_0 \otimes v_1, v_1 \otimes v_0 \mapsto v_0 \otimes v_1 - v_1 \otimes v_0$, and adding the identity map gives us the left hand side.
\end{enumerate}

Now to show that $T$ is an isomorphism, we define for each $n \in \mathbb{N}$ a sequence $X_n$ in the $n$-box of $DTL(2)$ and then show that:

\begin{enumerate}
\item The sequence $T(X_n)$ is a basis for $Hom(V^{\otimes n}, \mathbbm{1})$, showing surjectivity of $T$.
\item The sequence $X_n$ spans the $n$-box of $DTL(2)$, showing injectivity of $T$.
\end{enumerate}

Take $X_n$ to be the set of diagrams built from nonintersecting caps with a single left-facing bracket per cap (so $X_n$ is empty when $n$ is odd).

\begin{figure}[H]
\centering
\begin{tikzpicture}

\begin{scope}[decoration={
	markings,
	mark=at position 0.2 with {\arrow[scale=1,>=latex]{>}},
	mark=at position 0.6 with {\arrow[scale=1,>=latex]{<}},
	mark=at position 0.4 with {\arrow[scale=1.25,>=latex]{]}}
}
	]
\draw[-,postaction=decorate] (0,0) to [controls=+(90:1.4) and +(90:1.4)] (2,0);
\draw[-,postaction=decorate] (0.5,0) to [controls=+(90:0.9) and +(90:0.9)] (1.5,0);
\draw[-,postaction=decorate] (2.5,0) to [controls=+(90:0.9) and +(90:0.9)] (3.5,0);
\end{scope}
\end{tikzpicture}
\caption{An example diagram from $X_6$}
\end{figure}
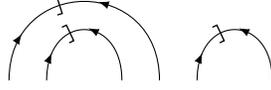

These are indexed by matching parentheses, so that $dim(X_{2k}) = C_k$, where $C_k$ is the Catalan number $\frac{1}{k+1} {{2k} \choose {k}}$. This is equal to $dim(Hom(V^{\otimes 2k},\mathbbm{1}))$, since we have one irreducible representation of $SL_2(\mathbb{C})$ up to isomorphism of each dimension, so that $dim(Hom(V^{\otimes 2k},\mathbbm{1}))$ is equal to the multiplicity of $\mathbbm{1}$ in $V^{\otimes 2k}$. Further it is well known that $V \otimes V_i \simeq V_{i-1} \oplus V_{i+1}$ where $V_n$ is the irreducible of dimension $n$, so it follows that the multiplicity of $\mathbbm{1}$ in $V^{\otimes 2k}$ is $C_k$. Then, if we show $T(X_n)$ is independent it forms a basis. The proof of this is in Proposition $10$, and uses some techniques discussed later so we defer for now.

To show $X_n$ spans the $n$-box of $DTL(2)$, we note that in an arbitrary diagram we can first remove all crossings with $E_4$. Then remove all components not connected to the ground using $E_1,E_2,$ and $E_3$, which can not introduce any new crossings. Finally using $E_2$ and $E_3$ we reduce to a single bracket on each strand component, and direct it leftward.

\end{proof}



We have a special element in $DTL(2)$, the symmetrizer on $n$ strands, that we will refer to later. This is a special case of the (disoriented version) of the Jones-Wenzl projections \cite{wenzl} when the circle value is $2$.

\begin{mydef}
The \textbf{symmetrizer} or \textbf{Disoriented Jones-Wenzl} projection on $n$ strands is the element of $DTL(2)$ defined by 
$$DJW_n=Sym_n=\frac{1}{n!} \sum_{\sigma \in S_n} \includegraphics[scale=1,valign=c]{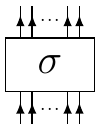}$$ 
where $\sigma$ connects incoming strand $i$ to outgoing strand $\sigma(i)$.
\end{mydef}

\subsection{Skein Theory for Planar Algebras}
Skein theory was first introduced by Conway in studying knot theory \cite{Cskein}. This lead to computation of invariants such as the Jones polynomial \cite{JPoly}, and is a useful technique in studying planar algebras. A skein theory for a planar algebra is defined by giving a set of directed local relations which allow simplification of a diagram (i.e. we give a notion of what a simplified diagram is, and the relations have a side identified as being simpler). It's natural to look for bases of the box spaces of a planar algebra, and skein theory gives a technique for rewriting a diagram in terms of some basis of diagrams. For example consider $DTL(2)$ where we have the relation below and can use it to simplify a diagram by removing all crossings (all terms resulting in the application of the crossing relation have $1$ fewer crossing, and we can repeat until there are none).

\begin{center}
\vspace{2mm}
\vinclude{crossup} $=$ \vinclude{idmap} \hspace{2mm} \vinclude{idmap} $+$ \vinclude{capcup}
\vspace{2mm}
\end{center}

We illustrate with the example below. The tree shows the terms which appear during the reduction of our initial diagram (up to sign as brackets and orientations have been omitted for simplicity). We see that no terms in the result have a strand crossing.

\begin{center}
\includegraphics[scale=0.85]{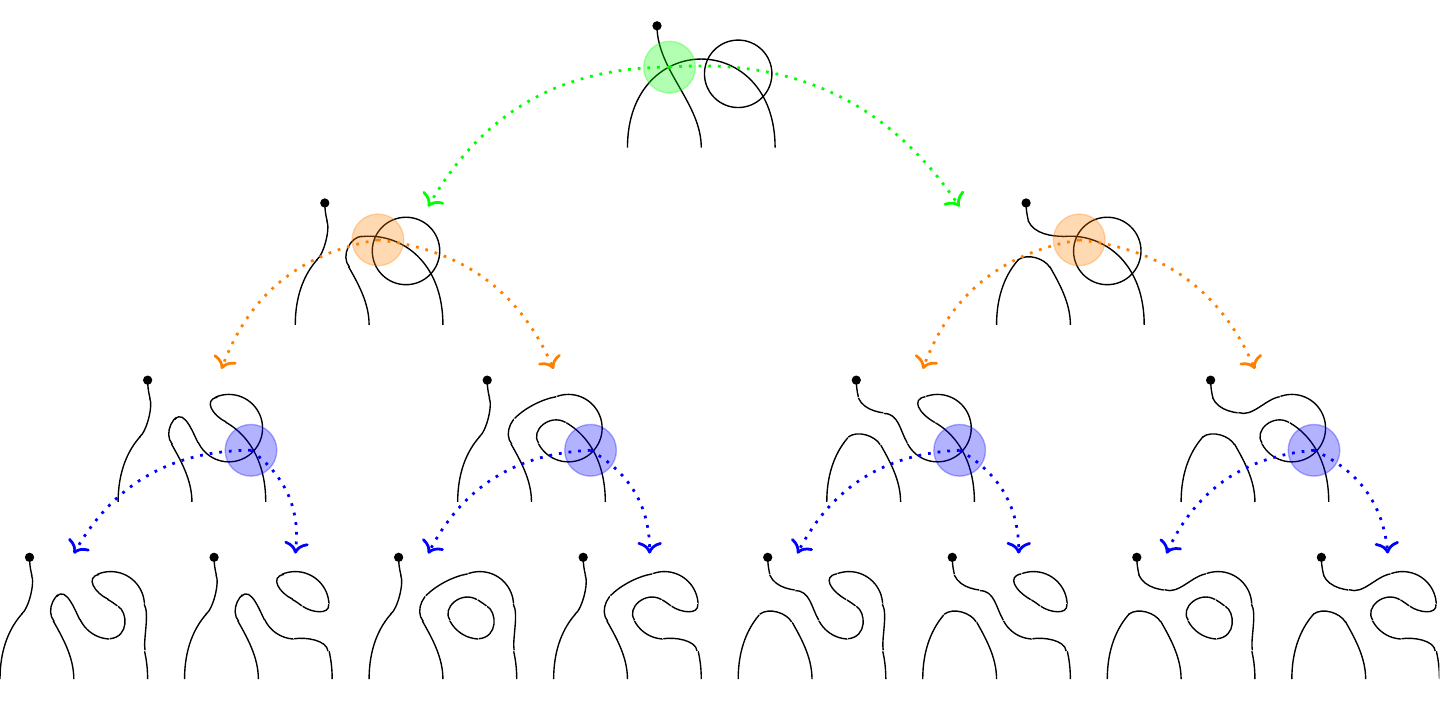}
\end{center}

When defining a skein theory, we need to come up with a full set of relations as well as a notion of simplification so that we can reduce an arbitrary picture. We look for a basis of diagrams for our box spaces that are simple by some definition, and show that our skein relations give a simplification algorithm that ends in terms of the chosen basis. The presentations in Section \ref{sec:presentations} define the skein theories for the planar algebras discussed in this paper. In Sections \ref{sec:yn_span_0} and \ref{sec:yn_span_p} we make use of skein theory to reduce arbitrary diagrams and prove that we have bases for our box spaces.

\subsection{Light Leaf and Jellyfish type bases and reduction arguments}
We give light leaf and jellyfish style arguments, combining the two in the characteristic $p$ case. Jellyfish style arguments are as seen in \cite{BMPS,Jelly1,Jelly2} and light leaf arguments are as seen in \cite{LL,elias-will,Skein,elias-hweights}. In either style of argument our goal is to find bases for the box spaces of some planar algebra.

In a light leaf style argument, we construct a sequence of maps indexed by the fusion graph for some indecomposable $V$. In characteristic $0$ for representations of a Lie group, these maps are constructed in a way that we can pair the maps with basis vectors of $V^{\otimes n}$, and then use an argument involving highest weights to show the maps are linearly independent. Finally we show that we can reduce an arbitrary diagram to the span of the maps that we have constructed.

In a jellyfish style argument, we define a sequence of maps using some generating set of vertices such that the maps are as simple as possible with respect to a reduction algorithm. The reduction algorithm uses a set of directed equalities (we define one side to be simpler than the other) to start from an arbitrary diagram and reduce to the jellyfish diagrams. We need to then show the jellyfish diagrams are independent so that we have bases for the box spaces.

In our characteristic $p$ example, we use a light leaf style argument to get a basis of the box spaces of $Alg_p$, so that our map $T_p$ is surjective. Notably the typical argument using weight vectors doesn't quite work, since there is no basis which behaves well with respect to weights, but the same technique works via a modified argument. We then use a jellyfish style argument to show $T_p$ is injective, by giving a reduction algorithm and a bijection between the light leaf and jellyfish bases.

\subsection{Outline of technique for finding a diagrammatic presentation of $\text{Rep}_k(G,V)$.}

Our goal is to determine a generating set of vertices and relations for $\text{Diag}_{\smallblacksquare}$ so that $\text{Diag}_{\smallblacksquare} \simeq \text{Alg}_\smallblacksquare$ as planar algebras. We use the following procedure.
 \begin{enumerate}
 \item Construct a set of vertices and relations for $\text{Diag}_{\smallblacksquare}$, and a map of planar algebras $T:\text{Diag}_{\smallblacksquare} \rightarrow \text{Alg}_\smallblacksquare$. To define $T$ we need to pick a morphism in $\text{Alg}_\smallblacksquare$ for every vertex, and check that the images of all the relations hold in $\text{Alg}_\smallblacksquare$.
 \item Construct a collection of sequences $X_\smallblacksquare$ indexed by $\mathbb{N}$ where the sequence of index $n$ has terms in $\text{Diag}_{\smallblacksquare}(n)$. We denote the $n^{\text{th}}$ sequence by $X_\smallblacksquare(n)$, and require that $X_\smallblacksquare(n)$ has length $\dim(\text{Alg}_\smallblacksquare(n))$ and $T(X_\smallblacksquare(n))$ is linearly independent.
 \item Construct a collection of sequences $Y_\smallblacksquare$ indexed by $\mathbb{N}$ where the sequence of index $n$ has terms in $\text{Diag}_{\smallblacksquare}(n)$. We require that $Y_\smallblacksquare(n)$ has length $\dim(\text{Alg}_\smallblacksquare(n))$ and spans $\text{Diag}_{\smallblacksquare}(n)$.

\end{enumerate}

 Now take $X_\smallblacksquare(n)$ and $Y_\smallblacksquare(n)$ to be the sequences in the statement of Lemma 1. The result of Lemma 1 implies that $\text{Diag}_{\smallblacksquare} \simeq \text{Alg}_\smallblacksquare$ as planar algebras, since the restriction $\left.T \right|_{\text{Diag}_{\smallblacksquare}(n)}$ is an isomorphism for each $n$. Further, $X_\smallblacksquare(n)$ and $Y_\smallblacksquare(n)$ are bases of $text{Diag}_{\smallblacksquare}(n)$ which we will call the light leaf basis and jellyfish basis respectively.

\subsection{Disorientation markings}
In our main results we use Disoriented Temperley Lieb as our base planar algebra, however it is more common to see Temperley Lieb presented as an unoriented planar algebra. This is because in the semisimple case there is a change of pivotal structure allowing us to disregard strand orientation, while in the examples we are interested in that change of pivotal structure no longer exists. 

We recall that a pivotal structure on a rigid monoidal category is a natural isomorphism of monoidal functors $p:Id \rightarrow (-)^{\ast\ast}$ between the identity and double dual. A change of pivotal structure is a natural automorphism of the identity functor, and we form the group $\Delta$ of changes of pivotal structure. We will be in the context of a planar algebra $\otimes$-generated by an object $V$, where $V$ is in some pivotal tensor category. Since any $\delta\in \Delta$ is monoidal, $\delta$ is determined by a choice of automorphism of $V$. For example taking $f \in Aut(V)$ we get the components $\delta_{V^{\otimes n}}=f^{\otimes n}$.

For Temperley-Lieb, the change of pivotal structure is uniquely determined using the tensor product taking the negative identity map on $V$ as our automorphism. This allows us to ignore strand orientation. In our main results we have a vertex $\bullet$ of degree $(+)$, and the change of pivotal structure used for Temperley-Lieb is not natural with respect to $\bullet$. Further, any change of pivotal structure would be determined by some choice of $\alpha$ and $\beta$ in:
$$\alpha \vinclude{idmap} + \beta \vinclude{dotdot}$$

 Since $\delta$ must be natural (in particular natural with respect to $\bullet$), applying $\bullet$ to the top of both sides of the equation as in the computation below shows $\alpha=1$, so that it is not possible to just remove the disorientation markings. The term with coefficient $\beta$ below vanishes by relation $E_2$ in Theorem \ref{thm:char0}.
$$\vinclude{dotmap} = \alpha \vinclude{dotmap} + \beta \vinclude{dotdotdot} = \alpha \vinclude{dotmap} \implies \alpha = 1 $$

\section{$\text{URep}_{\mathbb{C}}(\mathbb{C}^+)$}
\label{ch:char0}
We want to consider representations of the group $\mathbb{C}^+$. By the correspondence with representations of Lie algebras, these are all given by $z \mapsto \exp(zM)$, determined by a choice of $M \in \mathfrak{gl}(\mathbb{C}^n)$. Since $\exp: \mathfrak{gl}(\mathbb{C}^n) \rightarrow GL(\mathbb{C}^n)$ is surjective, each choice of $A \in GL(\mathbb{C}^n)$ gives a representation determined by $1 \mapsto A$. Representations are then conjugacy classes of invertible matrices, and we pick a representative in Jordan normal form. This makes it clear that indecomposable representations correspond to Jordan blocks. We restrict to unipotent representations, so consider the tensor subcategory of $Rep_\mathbb{C}(\mathbb{C}^+)$ where all eigenvalues of the defining matrix of the representation are equal to $1$.

Let $V_n=(\mathbb{C}^n,\phi_n)$ where $\phi_n: \mathbb{C}^+ \rightarrow GL(\mathbb{C}^n)$ is defined by $\phi_n(1)=J_n$, and $J_n$ is the Jordan block of dimension $n$ with eigenvalue $1$. In this section we set $V=V_2$ and study $Alg_0=\mathcal{P}(Rep_{\mathbb{C}}(\mathbb{C}^+,V))$. Taking the standard basis of $\mathbb{C}^2$, $v_0 = (1,0)$ and  $v_1 = (0,1)$, we use the isomorphism $\varphi:V \rightarrow V^{\ast}$ defined by $\varphi(v_0)=v_1^{\ast}$, $\varphi(v_1)=-v_0^{\ast}$. We will construct a diagrammatic planar algebra $Diag_0$ and an isomorphism of planar algebras to $Alg_0$ 

\subsection{$\text{Hom}_{\mathbb{C}^+}(V_n,V_m)$}\label{homs}
We need to find the $\mathbb{C}$-linear maps $T:\mathbb{C}^n \rightarrow \mathbb{C}^m$ satisfying $T \cdot J_n = J_m \cdot T$. Writing $J_i = I_i + N_i$ where $I_i$ is the $i$-dimensional identity map, we have $T \cdot (I_n + N_n) = (I_m + N_m) \cdot T$, so it is sufficient to find all $T$ such that $T \cdot N_n = N_m \cdot T$. Consider an arbitrary $T=\left(a_{i,j}\right) \in Hom(V_n,V_m)$. The matrix $N_n$ acts on $T$ on the right by shifting all columns to the right once, and replacing the first column by the zero vector. Similarly, $N_m$ acts on $T$ on the left by shifting all rows up once, and replacing the last row by the zero vector. This gives the relation $a_{i,j} = a_{i+1,j+1}$ on the coordinates of T. Since further $a_{i,1}=0$ for $i > 1$ and $a_{m,j} = 0$ for $j \geq 1$, we know all entries of $T$ below the main diagonal (the set of entries $\{a_{i,i}\}$) must be $0$, and the main diagonal will be zero when $n > m$. Every diagonal of T above the main diagonal (and including the main diagonal when $n \leq m$) will then correspond to one free parameter. We get $dim(Hom(V_n,V_m))=\min(n,m)$.

To give an explicit basis of $Hom(V_n,V_m)$, let $s = \min(n,m)$. We fix the standard ordered basis for both $\mathbb{C}^n$ and $\mathbb{C}^m$, and consider $\mathbb{C}^s$ to be a subspace of each of $\mathbb{C}^n$ and $\mathbb{C}^m$ by taking the first $s$ basis vectors of $\mathbb{C}^m$, and the last $s$ basis vectors of $\mathbb{C}^n$. The set of maps $\{T_i\}_{0 \leq i \leq s-1}$, where $T_i$ acts by $N_s^i$ on $\mathbb{C}^s \subseteq \mathbb{C}^n$ and $0$ on the complement of $\mathbb{C}^s$, forms a basis of $Hom(V_n,V_m)$. For example: 

\noindent$Hom(V_3,V_4)$ has basis $$(T_0, T_1, T_2) = \left( \begin{bmatrix}
 1 & 0 & 0 \\
 0 & 1 & 0 \\
 0 & 0 & 1 \\
 0 & 0 & 0 
\end{bmatrix}, \begin{bmatrix}
 0 & 1 & 0 \\
 0 & 0 & 1 \\
0 & 0 & 0 \\
 0 & 0 & 0 
\end{bmatrix}, \begin{bmatrix}
 0 & 0 & 1 \\
 0 & 0 & 0 \\
 0 & 0 & 0 \\
 0 & 0 & 0
\end{bmatrix} \right).$$

\noindent$Hom(V_4,V_3)$ has basis $$(T_0,T_1,T_2) = \left( \begin{bmatrix}
 0 & 1 & 0 & 0 \\
 0 & 0 & 1 & 0 \\
 0 & 0 & 0 & 1 \\
\end{bmatrix},\begin{bmatrix}
 0 & 0 & 1 & 0 \\
 0 & 0 & 0 & 1 \\
 0 & 0 & 0 & 0 \\
\end{bmatrix},\begin{bmatrix}
 0 & 0 & 0 & 1 \\
 0 & 0 & 0 & 0 \\
 0 & 0 & 0 & 0 \\
\end{bmatrix} \right).$$

\subsection{Defining $Diag_0$ and the map to $Alg_0$}
Define $Diag_0$ to be the OSPA defined by the generators and relations below. Note that $Diag_0$ is defined by the generators and relations of $DTL(2)$, along with the added generator $G_1$ and generating relation $E_2$.
\begin{center}
\begin{tikzpicture}[scale=0.9]
\begin{scope}[decoration={
    markings,
    mark=at position 0.5 with {\arrow[scale=1.25]{latex}}}
    ]
\clip (-1,-1.25) rectangle (12.25,3.5);

\draw [postaction={decorate}] (0,0) circle (17pt);
\draw (1.2,0) node[scale=1.4] {$=$};
\draw (1.2,.05) node[anchor=south,scale=0.8] {$E_3$};
\draw (1.5,.05) node[anchor=west,scale=1.4] {$2$};
\draw [dotted] (-0.9,-1) rectangle (2.25,1);

\draw [-,postaction=decorate] (9.25,-0.75) to (9.25,0.75);
\draw [-,postaction=decorate] (9.75,-0.75) to (9.75,0.75);
\draw (10.25,0) node[scale=1.2] {$+$};

\draw [-,postaction={decorate}] (0,1.5) to (0,2.75);
\draw [fill=black] (0,2.75) circle (1.5pt);

\draw [-,postaction={decorate}] (8,3.25) to (8,1.5);
\draw (7,2.25) node[scale=1.4] {$=$};
\draw (7,2.3) node[anchor=south,scale=0.8] {$E_1$};

\draw [dotted] (5.5,1.25) rectangle (8.5,3.5);
\end{scope}

\draw (-0.1,2.25) node[anchor=east] {$G_1:$};

\begin{scope}[decoration={
	markings,
	mark=at position 0.5 with {\arrow[scale=1.25,thick]{[}},
	mark=at position 0.25 with {\arrow[scale=1.25]{latex}},
	mark=at position 0.8 with {\arrow[scale=1.25,>=latex]{<}}}
	]

\draw [-,postaction={decorate}] (2,3.25) to (2,1.5);
\draw [-,postaction={decorate}] (3.5,-0.75) to (3.5,0.75);
\draw [-,postaction={decorate}] (5.5,0.75) to (5.5,-0.75);

\draw (4.5,0) node[scale=1.4] {$=$};
\draw (4.5,0.05) node[scale=0.8,anchor=south] {$E_4$};
\draw (5,0) node[scale=1.2] {$-$};
\draw [dotted] (3,-1) rectangle (6,1);

\draw [dotted] (6.5,-1) rectangle (12.25,1);
\end{scope}

\draw (1.9,2.25) node[anchor=east] {$G_2:$};

\begin{scope}[decoration={
	markings,
	mark=at position 0.5 with {\arrow[scale=1.25,thick]{]}},
	mark=at position 0.8 with {\arrow[scale=1.25]{latex}},
	mark=at position 0.25 with {\arrow[scale=1.25,>=latex]{<}}}
	]
\draw [-,postaction={decorate}] (4,3.25) to (4,1.5);

\draw [-,postaction={decorate}] (10,3.25) to (10,1.5);
\draw [fill=black] (10,3.25) circle (1.5pt);
\draw [fill=black] (10,1.5) circle (1.5pt);
\draw (11,2.25) node[scale=1.4] {$=$};
\draw (11,2.3) node[anchor=south,scale=0.8] {$E_2$};
\draw (11.5,2.3) node[scale=1.3,anchor=west] {$0$};
\draw [dotted] (9.5,1.25) rectangle (12.25,3.5);

\end{scope}

\begin{scope}[decoration={markings,
	mark=at position 0.35 with {\arrow[scale=1.25,thick]{]}},
	mark=at position 0.55 with {\arrow[scale=1.25,>=latex]{>}},
	mark=at position 0.2 with {\arrow[scale=1.25,>=latex]{<}}}
	]
\draw [-,postaction=decorate] (12,.75) to [controls= +(-90:0.75) and +(-90:0.75)] (10.75,0.7);
\end{scope}
\begin{scope}[decoration={markings,
	mark=at position 0.35 with {\arrow[scale=1.25,thick]{]}},
	mark=at position 0.55 with {\arrow[scale=1.25,>=latex]{<}},
	mark=at position 0.2 with {\arrow[scale=1.25,>=latex]{>}}}
	]
\draw [-,postaction=decorate] (10.75,-0.75) to [controls= +(90:0.75) and +(90:0.75)] (12,-0.75);
\end{scope}

\begin{scope}[decoration={
	markings,
	mark=at position 0.3 with {\arrow[scale=1.25]{latex}},
	mark=at position 0.8 with {\arrow[scale=1.25]{latex}}}
	]

\draw [-,postaction={decorate}] (6.75,-0.75) to (8,0.75);
\draw [-,postaction={decorate}] (8,-0.75) to (6.75,0.75);	

\end{scope}


\draw [-,postaction={decoration={markings,mark=at position 0.35 with {\arrow[scale=1.25,thick]{[}},mark=at position 0.65 with {\arrow[scale=1.25,thick]{]}},mark=at position 0.2 with {\arrow[scale=1.25,>=latex]{<}},mark=at position 0.8 with {\arrow[scale=1.25,>=latex]{<}},mark=at position 0.53 with {\arrow[scale=1.25,>=latex]{>}}},decorate}] (6,1.5) to (6,3.25);

\draw (3.9,2.25) node[anchor=east] {$G_3:$};

\draw (8.5,0) node[scale=1.4] {$=$};
\draw (8.5,0.05) node[scale=0.8,anchor=south] {$E_5$};
\end{tikzpicture}
\end{center}
\begin{prop}\label{prop:ismap0}
There is a map of planar algebras $T:Diag_0 \rightarrow Alg_0$ determined by the values $T(G_1)=v_1^{\ast},T(G_2)=\varphi$ and $T(G_3)=\varphi^{-1}$. 
\end{prop}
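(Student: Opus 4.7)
The plan is to verify that $T$ is a well-defined map of planar algebras. This reduces to two checks: first, that each of $v_1^*$, $\varphi$, and $\varphi^{-1}$ is a $\mathbb{C}^+$-equivariant morphism, so that the assignments actually land in $Alg_0$; and second, that each of the five relations $E_1,\ldots,E_5$ of $Diag_0$ holds in the image under $T$, so that $T$ descends to the quotient by the planar ideal generated by these relations.

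For the first check, I would compute with the explicit representation $\varphi_x(v_0)=v_0$, $\varphi_x(v_1)=xv_0+v_1$. The contragredient action fixes $v_1^*$ and sends $v_0^*\mapsto v_0^*-xv_1^*$, so $v_1^*\in V^*$ is invariant. A short calculation on basis vectors then confirms $\varphi\circ \varphi_x = \varphi_x \circ \varphi$, giving equivariance of $\varphi$ and hence of $\varphi^{-1}$.

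For the second check, the key observation is that $Diag_0$ contains $DTL(2)$ as the sub-OSPA generated by $G_2$ and $G_3$, so four of the five relations are inherited from $DTL(2)$. Their verifications in Proposition \ref{prop:dtl2} depend only on the linear-algebraic behaviour of $\varphi$, $\epsilon$, and $\eta$ on the basis $(v_0,v_1)$, and not on the ambient group, so those computations transfer verbatim. The one genuinely new relation involves the new generator $G_1$ and asserts that the closed scalar diagram built from two dots joined through the bracket $G_3$ evaluates to zero. Reading this diagram bottom-to-top, the bottom dot produces $v_1^*\in V^*$, the bracket applies $\varphi^{-1}$ carrying $v_1^*\mapsto v_0\in V$, and the top dot evaluates to $v_1^*(v_0)=0$, as required.

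The main obstacle is bookkeeping rather than mathematical subtlety: one must convert each diagrammatic relation into the correct composite morphism in $Alg_0$, tracking strand orientations, the placement of the marker $\star$, and the sign convention $\varphi(v_1)=-v_0^*$. Once those conventions are pinned down, every verification reduces to a short computation against the basis $(v_0,v_1)$.
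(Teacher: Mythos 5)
Your proof is correct and takes essentially the same route as the paper: relations $E_1,E_3,E_4,E_5$ are inherited from the $DTL(2)$ computations of Proposition~\ref{prop:dtl2} (which are purely linear-algebraic and hence independent of the acting group), and the new double-dot relation $E_2$ is verified by exactly the paper's calculation $v_1^{\ast}(\varphi^{-1}(v_1^{\ast}))=v_1^{\ast}(v_0)=0$. Your additional explicit check that $v_1^{\ast}$, $\varphi$, and $\varphi^{-1}$ are $\mathbb{C}^+$-equivariant is left implicit in the paper but is a correct and harmless supplement.
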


\begin{proof}
We need to show each of $E_1$ through $E_5$ hold in the image of $T$ so that this map is well defined. All but $E_2$ were shown in the proof of Proposition \ref{prop:dtl2} in Section \ref{sec:dtl2}. $E_2$ follows from the computation $v_1^{\ast} \cdot \varphi^{-1} \cdot v_1^{\ast\ast}(1)=v_1^{\ast} \cdot \varphi^{-1} (v_1^{\ast})=v_1^{\ast}(v_0)=0$.
\end{proof}

\subsection{Constructing $X_0(n)$ and showing independence of $T(X_0(n))$}
The indecomposable representations that appear in $\otimes$-powers of $V$ are enumerated by the sequence $V_n$.  We define the \textbf{fusion graph} for $V$, $\Gamma(V)$, to have a vertex for each indecomposable representation and an edge from $V_i$ to $V_j$ for each summand isomorphic to $V_j$ in $V \otimes V_i$. When $i \geq 2$ we have the rule $V \otimes V_i \simeq V_{i+1} \oplus V_{i-1}$ so that $\Gamma(V)$ is

\begin{tikzpicture}[scale=0.85]
\clip(-0.5,-1.85) rectangle (14,-.25);
\foreach \x in {1,...,7}
	\draw (2*\x-2,-1.1) node[anchor=north] {$V_{\x}$};
\foreach \x in {1,...,6}
	{\draw [->, >=latex] (2*\x-2,-1) to (2*\x-0.1,-1);
	\draw [<-, >=latex] (2*\x-1.9,-1) to (2*\x,-1);}
\draw (12,-1) to (13,-1);

\foreach \x in {0,...,6}
	\draw [fill=black] (2*\x,-1) circle (2pt);
\foreach \x in {1,...,3}
	\draw [fill=black] (13+.25*\x,-1) circle (0.5pt);

\draw [->,>=latex] (12.5,-1) to (12.1,-1);

\end{tikzpicture}

Note that this is the underlying graph for the automata $M_0$ of Section $2.2$, and since all states are accepting, paths of length $n$ on $\Gamma(V)$ are in bijection with $L_{M_0}(n)$.

\begin{prop}
$\#(L_{M_0}(n))=dim(Alg_0(n))$
\end{prop}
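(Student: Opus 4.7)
The plan is to identify both sides with the total multiplicity of indecomposable summands appearing in $V^{\otimes n}$. By Krull-Schmidt, which applies to finite-dimensional representations, we can write $V^{\otimes n} \simeq \bigoplus_k V_k^{\oplus m_k(n)}$ with uniquely determined multiplicities $m_k(n)$, and both $\#L_{M_0}(n)$ and $\dim(Alg_0(n))$ will turn out to equal $\sum_k m_k(n)$.

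First I would interpret $\#L_{M_0}(n)$ combinatorially. Since every state of $M_0$ is accepting, $L_{M_0}(n)$ is in bijection with walks of length $n$ on $\Gamma(V)$ starting at $V_1$. Combining the fusion rule $V \otimes V_i \simeq V_{i-1} \oplus V_{i+1}$ for $i \geq 2$ with the boundary case $V \otimes V_1 \simeq V_2$, tensoring the Krull-Schmidt decomposition of $V^{\otimes n}$ by $V$ gives the recursion
$$m_k(n+1) = m_{k-1}(n) + m_{k+1}(n)$$
(with $m_0 \equiv 0$), subject to $m_1(0) = 1$ and $m_k(0) = 0$ for $k > 1$. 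This is exactly the recursion counting walks of length $n$ from $V_1$ to $V_k$ on $\Gamma(V)$, so $m_k(n)$ equals the number of such walks and $\#L_{M_0}(n) = \sum_k m_k(n)$.

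Next I would compute $\dim(Alg_0(n))$. From Section \ref{sec:repalg}, the box space $Alg_0(n)$ equals $\text{Hom}(V^{\otimes n}, \mathbbm{1})$, which by the same Krull-Schmidt decomposition has dimension $\sum_k m_k(n) \cdot \dim\text{Hom}(V_k, \mathbbm{1})$. To finish, I would verify $\dim\text{Hom}(V_k, \mathbbm{1}) = 1$: any $\mathbb{C}^+$-linear functional $f: V_k \to \mathbbm{1}$ must satisfy $f \circ (\varphi_k(1) - 1) = 0$, so $f$ vanishes on $\text{im}(J_k - I) = \text{span}(e_1, \ldots, e_{k-1})$, and conversely any functional vanishing on this codimension-one subspace is $\mathbb{C}^+$-invariant. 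Combining yields $\dim(Alg_0(n)) = \sum_k m_k(n) = \#L_{M_0}(n)$.

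The main obstacle is largely bookkeeping: matching the fusion recursion to the walk recursion on $\Gamma(V)$ while correctly treating the $V_1$ boundary, and invoking Krull-Schmidt cleanly. The non-semisimplicity of $\mathcal{C}_0$ does not obstruct the argument since each $V_k$ is a single Jordan block with a unique invariant line, making $\text{Hom}(V_k, \mathbbm{1})$ one-dimensional by direct inspection.
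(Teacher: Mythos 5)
Your proposal is correct and follows essentially the same route as the paper: reduce $\dim(Alg_0(n))=\dim\mathrm{Hom}(V^{\otimes n},\mathbbm{1})$ to the number of indecomposable summands of $V^{\otimes n}$ via $\dim\mathrm{Hom}(V_k,\mathbbm{1})=1$, and identify that count with $\#L_{M_0}(n)$ through the fusion rules on $\Gamma(V)$. The only cosmetic differences are that you phrase the summand count as a multiplicity recursion rather than the paper's inductive bijection between summands and paths, and you verify $\dim\mathrm{Hom}(V_k,\mathbbm{1})=1$ directly instead of citing the earlier computation $\dim\mathrm{Hom}(V_n,V_m)=\min(n,m)$.
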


\begin{proof}
  We have $\dim(Alg_0(n))=\text{dimHom}(V^{\otimes n},\mathbbm{1})=\text{dimHom} \big( \sum{\alpha_i V_i,\mathbbm{1} \big) }=\sum{\alpha_i \text{dimHom}(V_i,\mathbbm{1})}$. As in Section \ref{homs}, dimHom$(V_j, \mathbbm{1}) = 1$ for any indecomposable $V_j$. We then have dim$(B_n)=\sum{\alpha_i}$, which is the number of summands of $V^{\otimes n}$. We can see summands of $V^{\otimes n}$ are in bijection with $L_{M_0}(n)$ by induction on $n$. Assume we have a direct sum decomposition of $V^{\otimes n}$ and a bijection between summands of $V^{\otimes n}$ and $L_{M_0}(n)$. By definition of $\Gamma$ the summands of $V^{\otimes (n+1)}$ will be the indecomposables that are adjacent to the summands of $V^{\otimes n}$, so append the adjacency edge to the path from the bijection at level $n$.
\end{proof}

We will now construct our sequence $X_n$ of length $\dim(Alg_0(n))$.

\begin{mydef}

To define a map $\delta_n :L_{M_0}(n) \rightarrow Diag_0(n)$ identify concatenation in a word $w$ with composition of diagrams, and identify each letter of the alphabet with a portion of a picture as below, where $\emptyset$ signifies the end of the word.
\begin{center}
\begin{tikzpicture}[x=.5cm,y=.5cm]
\clip(-1,-0.5) rectangle (16,5);
\begin{scope}[decoration={
    markings,
    mark=at position 0.5 with {\arrow[scale=1.25]{latex}}}
    ]

\draw [dotted] (1,0) rectangle (5,4);
\draw [-,postaction={decorate}] (1,2) to (5,2);
\draw [-,postaction={decorate}] (1,3) to (5,3);
\draw [fill=black] (3,2.25) circle (0.5pt);
\draw [fill=black] (3,2.50) circle (0.5pt);
\draw [fill=black] (3,2.75) circle (0.5pt);
\draw (3,4) node[anchor=south] {$R$};
\draw [-,postaction={decorate}] (3,0) to [controls = +(270:-1) and +(0:-1)] (5,1.5);

\draw [dotted] (6,0) rectangle (10,4);
\draw [-,postaction={decorate}] (6,2) to (10,2);
\draw [-,postaction={decorate}] (6,3) to (10,3);
\draw [fill=black] (8,2.25) circle (0.5pt);
\draw [fill=black] (8,2.5) circle (0.5pt);
\draw [fill=black] (8,2.75) circle (0.5pt);
\end{scope}
\begin{scope}[decoration={
    markings,
    mark=at position 0.5 with {\arrow[scale=1.25,thick]{]}},
    mark=at position 0.25 with {\arrow[scale=1.25]{latex}},
    mark=at position 0.75 with {\arrow[scale=1.25,>=latex]{<}}}
    ]

\draw [-,postaction={decorate}] (8,0) to [controls = +(270:-1) and +(0:1)] (6,1.5);
\end{scope}
\begin{scope}[decoration={
	markings,
	mark=at position 0.45 with {\arrow[scale=1.25]{latex}}}
	]
\draw (8,4) node[anchor=south] {$L$};

\draw [dotted] (11,0) rectangle (15,4);
\draw (13,4) node[anchor=south] {$\emptyset$};

\draw [-,postaction={decorate}] (11,3) to [bend right] (12,3.5);
\draw [-,postaction={decorate}] (11,2) to [bend right] (12.25,2.5);
\draw [fill=black] (12,3.5) circle (1.5pt);
\draw [fill=black] (12.25,2.5) circle (1.5pt);

\draw [fill=black] (11.5,2.25) circle (0.5pt);
\draw [fill=black] (11.5,2.5) circle (0.5pt);
\draw [fill=black] (11.5,2.75) circle (0.5pt);

\end{scope}
\end{tikzpicture}
\end{center}

 To define $\delta_n(w)$ replace each letter of $w$ with its identified picture, and then glue each picture end to end, including the picture for $\emptyset$ (glue dots onto any remaining strands).
\end{mydef}

For example, take $w=RRRLLRL$:

\begin{tikzpicture}[scale=0.9]
\clip (-1.51,-0.5) rectangle (16,1.7);

\draw [dotted] (0,0) rectangle (8,1.5);
\foreach \x in {1,...,7}
	\draw [dotted] (\x,0) to (\x,1.5);
\draw (-1.5,.75) node[anchor=west] {$\delta_7(w)=$};
\draw (8,.75) node[anchor=west] {$=$};
\foreach \x in {1,2,3,6}
	\draw (-0.5 + \x,0) node[anchor=north] {$R$};
\foreach \x in {4,5,7}
	\draw (-0.5 + \x,0) node[anchor=north] {$L$};

\draw (7.5,0) node[anchor=north] {$\emptyset$};

\begin{scope}[decoration={
    markings,
    mark=at position 0.5 with {\arrow{latex}}}
    ]

\draw [-,postaction={decorate}] (0.5,0) to [controls= +(90:0.5) and +(180:0.3)] (1,0.9);
\draw [-,postaction={decorate}] (1.5,0) to [controls= +(90:0.35) and +(180:0.3)] (2,0.7);
\draw [-,postaction={decorate}] (2.5,0) to [controls= +(90:0.2) and +(180:0.3)] (3,0.5);
\foreach \x in {1,...,6}
	\draw [-,postaction={decorate}] (\x,0.9) to (\x+1,0.9);

\draw [-,postaction={decorate}] (2,0.7) to (3,0.7);
\draw [-,postaction={decorate}] (3,0.7) to (4,0.7);

\draw [-,postaction={decorate}] (5.5,0) to [controls = +(90:0.2) and +(180:0.3)] (6,0.5);

\draw [-,postaction={decorate}] (7,0.9) to [bend right] (7.4,1.2);

\draw [fill=black] (7.4,1.2) circle (1.25pt);

\draw [-,postaction={decorate}] (8.85,0) to (8.85,0.6);
\draw [fill=black] (8.85,0.6) circle (1.25pt);

\end{scope}
\begin{scope}[decoration={
	markings,
	mark=at position 0.5 with {\arrow[thick]{]}},
	mark=at position 0.3 with {\arrow[>=latex]{>}},
	mark=at position 0.8 with {\arrow[>=latex]{<}}}
	]

\draw [-,postaction={decorate}] (6.5,0) to [controls= +(90:0.3) and +(0:0.3)] (6,0.5);
\draw [-,postaction={decorate}] (4.5,0) to [controls= +(90:0.4) and +(0:0.4)] (4,0.7);
\draw [-,postaction={decorate}] (3.5,0) to [controls= +(90:0.3) and +(0:0.3)] (3,0.5);

\end{scope}

\begin{scope}[decoration={
	markings,
	mark=at position 0.75 with {\arrow[thick]{]}},
	mark=at position 0.65 with {\arrow[>=latex]{>}},
	mark=at position 0.9 with {\arrow[>=latex]{<}}}
	]

\draw [-,postaction={decorate}] (10.25,0) to [controls = +(90:0.5) and +(90:0.5)] (9.5,0);
\draw [-,postaction={decorate}] (10.5,0) to [controls = +(90:0.9) and +(90:0.9)] (9.25,0);
\draw [-,postaction={decorate}] (11.5,0) to [controls = +(90:0.5) and +(90:0.5)] (10.75,0);

\end{scope}
\end{tikzpicture}

The images of $\delta_n$ form our sequence $X_n$.

\begin{prop}\label{prop:indep0}
The sequence $T(X_n)$ is linearly independent.
\end{prop}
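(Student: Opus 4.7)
The plan is to apply Lemma \ref{indep} to the finite set $S = L_{M_0}(n)$, taking $g = T \circ \delta_n$ (so that independence of the $g$-values is exactly independence of $T(X_n)$) and defining $f : L_{M_0}(n) \to V^{\otimes n}$ by $f(w) = v_{b_1} \otimes \cdots \otimes v_{b_n}$ with $b_i = 1$ if $w_i = R$ and $b_i = 0$ if $w_i = L$. Each $f(w)$ is then a distinct standard basis vector of $V^{\otimes n}$.

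To verify condition $(1)$ of the lemma, namely $g(w)(f(w)) \neq 0$, I will describe $\delta_n(w)$ combinatorially as a non-crossing matching $m(w)$ of a subset of the input positions (each cap coming from an $L$ closing the most recently opened $R$, with a bracket on the $L$-side) together with dots on the remaining unmatched positions, all of which originate from $R$'s. Using $T(G_1) = v_1^{*}$ and the cap formula $\mathrm{cap}(v_a,v_b) = \varphi(v_b)(v_a)$ one gets $\mathrm{cap}(v_1,v_0) = 1$, so on $f(w)$ each matched pair evaluates to $1$ and each unmatched-$R$ position, carrying $v_1$ under a $v_1^{*}$ dot, also contributes $1$. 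Hence $g(w)(f(w)) = 1$.

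The heart of the argument is the characterization of which basis vectors lie in the support of $g(w)$: the vector $v_{b'_1} \otimes \cdots \otimes v_{b'_n}$ evaluates nontrivially under $g(w)$ exactly when $b'_i = 1$ at every position unmatched in $m(w)$ and $b'_j + b'_k = 1$ at every matched pair $(j,k)$ of $m(w)$, since $\mathrm{cap}$ vanishes on $(v_0,v_0)$ and $(v_1,v_1)$. A position-by-position count then gives
\[ \#\{j \leq i : b'_j = 1\} \;\leq\; \#\{j \leq i : w_j = R\} \quad\text{for every } i \leq n, \]
because the two counts agree on unmatched positions and on matched pairs entirely contained in $\{1,\ldots,i\}$, while each open pair $(j,k)$ with $j \leq i < k$ contributes $1$ on the right but either $0$ or $1$ on the left. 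For $b' = f(w')$ with $w' \in L_{M_0}(n)$ this rewrites as $d_i(w') \leq d_i(w)$ for all $i$.

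Finally, I will order $L_{M_0}(n)$ by any total order $<$ extending the strict partial order ``$w \prec w'$ iff $d(w) \leq d(w')$ pointwise and $w \neq w'$''; this is well defined since the depth profile determines the word ($w_i = R$ iff $d_i(w) > d_{i-1}(w)$). If $w < w'$ in the extension then $d_i(w) < d_i(w')$ for some $i$, so by the contrapositive of the estimate above $g(w)(f(w')) = 0$, verifying condition $(2)$ of Lemma \ref{indep}. I expect the main obstacle to be the support characterization and the accompanying depth estimate in the third paragraph; once those are in place the remainder is routine combinatorial bookkeeping on $m(w)$.
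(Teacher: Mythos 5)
Your proposal is correct, and its skeleton is the paper's: you invoke Lemma \ref{indep} with the same $S=L_{M_0}(n)$, the same $g=T\circ\delta_n$, and the same pairing vectors $f(w)$ (the $0$--$1$ basis vector read off from the word). Where you diverge is in how condition $(2)$ is verified. The paper orders $L_{M_0}(n)$ lexicographically with $R>L$ and argues locally: at the first index where $x<y$ differ, $x$ has an $L$, so $\delta_n(x)$ has a cap joining that position to an earlier $R$-position shared with $y$; that cap then acts on $\high\otimes\high$ and vanishes. You instead prove a global statement: a basis vector $f(w')$ lies in the support of $g(w)$ only if $d_i(w')\le d_i(w)$ for all $i$ (since the dots force $\high$ at unmatched positions and each cap forces exactly one $\high$ in its pair), and then you take any linear extension of the resulting depth-dominance partial order. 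Your depth estimate and the case analysis behind ``$w<w'$ implies $d_i(w)<d_i(w')$ for some $i$'' both check out, and the lexicographic order is in fact one such linear extension, so your argument strictly subsumes the paper's triangularity step; what it buys is a reusable description of the support of each light-leaf functional, at the cost of a slightly longer bookkeeping argument than the paper's one-cap computation. One harmless discrepancy: with the paper's convention $\varphi(v_1)=-v_0^\ast$ the cap evaluates to $-1$ on $v_1\otimes v_0$ (see the table for \lcap\rcap\ on $\mathbf{10}$), so the diagonal value is $(-1)^{\#\text{caps}}$ rather than $1$; since Lemma \ref{indep} only needs the diagonal entries to be nonzero, this does not affect your conclusion.
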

\begin{proof}
As in Lemma \ref{indep}, let $S$ be $L_{M_0}(n)$ with lexicographic order taking $R>L$, and let $g$ be $T \circ \delta_n$. To define $f$ assign to each $x \in L_{M_0}$ a vector $f(x) \in V^{\otimes n}$ by identifying $R$ with \high, $L$ with \low, and concatenation with $\otimes$ (e.g. $f(RRLRLLRL)=$ \high\high\low\high\low\low\high\low)
By construction of the pairing we have $g(x)(f(x))=-1$, so condition $1$ of Lemma \ref{indep} holds. To show condition $2$ holds, note that if $x<y$ then
\begin{align*} 
f(y) &= \high^{i} \cdot \high \cdot u, \hspace{5mm} u \in V^{\otimes (n-(i+1))} \\
f(x) &= \high^{i} \cdot \low \cdot w, \hspace{5mm} w \in V^{\otimes (n-(i+1))}
\end{align*}
so that $g(x)(f(y))$ will have the form
\begin{center}
\begin{tikzpicture}[x=1cm,y=0.8cm]
\clip (-3,-1.2) rectangle (9.2,2.5);
\draw (1,1) rectangle (5,2);
\draw (3,1.5) node {$h$};
\begin{scope}[decoration={
	markings,
	mark=at position 0.5 with {\arrow[>=latex]{>}}}
	]
\foreach \x in {1.25,2,4,4.75}
	\draw [-,postaction=decorate] (\x,0) to (\x,1);
\end{scope}
	
\draw (1.65,0.5) node {$\cdots$};
\draw (4.425,0.5) node {$\cdots$};

\begin{scope}[decoration={
	markings,
	mark=at position 0.8 with {\arrow[thick]{]}},
	mark=at position 0.7 with {\arrow[>=latex]{>}},
	mark=at position 0.9 with {\arrow[>=latex]{<}}}
	]

\draw [-,postaction={decorate}] (3.5,0) to [controls= +(90:0.9) and +(90:0.9)] (2.5,0);
\end{scope}

\draw (3.5,-0.25) node {\high};
\draw (2.5,-0.25) node {\high};

\draw (1.65,0) node[anchor=north] {$\underbrace{\hspace{1mm}\high \hspace{1mm} \cdots \hspace{1mm} \high\hspace{1mm}}_{i-1}$};
\draw (4.37,0) node[anchor=north] {$\underbrace{\hspace{4mm}u \hspace{4mm}}_{n-(i+1)}$};

\end{tikzpicture}
\end{center}
which vanishes, since looking at the value of the map on factors $i$ and $i+1$ we have $(\epsilon)(-\varphi \otimes \text{Id}_{V})(\high\high)=\epsilon(\overline{\low}\high)=0$.
\end{proof}

\subsection{Constructing a spanning set for $Diag_0(n)$.}
\label{sec:yn_span_0}
We take $Y_0(n)=X_0(n)$. We will describe a list of properties that give an implicit description of the terms of $Y_0(n)$.

\begin{prop} A diagram $U \in Diag_0(n)$ is a term of $Y_0(n)$ if it satisfies the list of properties $\mathcal{P}$:
\begin{enumerate}[($P_1$)]
\item Every \dotmap\hspace{1.5mm}is in the closure of the sky.
\item There are no crossings in $U$.
\item Any strand component in $U$ has at most $1$ vertex.
\item All strand components are attached to the ground and positively oriented at the ground.
\item Every bracket is directed towards (encloses) the endpoint of its strand component which is furthest clockise with resepect to the $\star$ (rightmost along the ground).

\end{enumerate}
\end{prop}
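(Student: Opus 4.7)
The plan is to start from a diagram $U \in Diag_0(n)$ satisfying $\mathcal{P}$ and extract a word $w \in L_{M_0}(n)$ such that $\delta_n(w) = U$. Since $Y_0(n) = X_0(n) = \mathrm{image}(\delta_n)$, this reduces the proposition to a combinatorial identification between diagrams satisfying $P_1$--$P_5$ and paths of length $n$ on the fusion graph $\Gamma(V)$.

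First I would classify the strand components of $U$. By $P_4$ every strand component meets the ground, and by $P_2$ every component is an embedded planar arc. Such an arc either has two ground endpoints (in which case it is a cap) or exactly one ground endpoint and one other endpoint that is a vertex; by $P_3$ there is at most one vertex on this component, and by $P_1$ such a vertex must be a dot in the closure of the sky. Thus the strand components of $U$ are partitioned into non-crossing caps (with two ground endpoints) and ``standing strands'' (one ground endpoint, going up to a dot in the sky).

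Next I would read a word $\omega(U)$ off the ground. Labelling the ground endpoints left to right, assign $R$ to the left endpoint of a cap and to the ground endpoint of a standing strand, and assign $L$ to the right endpoint of a cap; after all $n$ endpoints are processed, append $\emptyset$. Non-crossing planarity forces the caps to form a non-crossing matching, so at every prefix the number of $R$s minus the number of $L$s equals the number of currently-open standing strands, which is nonnegative. This is exactly the acceptance condition for $M_0$, so $\omega(U) \in L_{M_0}(n)$.

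Finally I would check $\delta_n(\omega(U)) = U$ in $Diag_0(n)$. By construction $\delta_n(\omega(U))$ has the same combinatorial matching of ground endpoints (and the same set of standing strands) as $U$, and any two planar, non-crossing cap diagrams on a fixed set of ground endpoints with the same matching and the same standing-strand data are equal up to planar isotopy. Property $P_5$ fixes the bracket on the rightmost segment of each cap and of each standing strand, matching the bracket placement of $\delta_n$, while $P_1$ fixes every dot at the sky, again as produced by $\delta_n$. The main obstacle is this last step: one must rule out the possibility that some other representative of the same class in $Diag_0(n)$ (using relations $E_1$--$E_5$) escapes the description, but since $P_1$--$P_5$ pin down the number, location, and decoration of every strand component up to planar isotopy, no relation needs to be applied, and the identification is forced.
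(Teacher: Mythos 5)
Your proposal is correct and is essentially the paper's own argument: scan the ground endpoints left to right, classify each strand end (terminates at a dot, left end of a cap, right end of a cap), read off a word, and observe that $\delta_n$ of that word is planar isotopic to $U$ because $\mathcal{P}$ pins down the matching, dot placement, and bracket direction. The only cosmetic difference is that you assign $R$/$L$ directly (checking the noncrossing/LIFO matching yourself), whereas the paper first produces a word in $L_{N_0}$ and then invokes the bijection $L_{N_0}\simeq L_{M_0}$.
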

\begin{proof} It is clear that terms of $Y_0(n)$ satisfy $\mathcal{P}$. We need to show any diagram which satisfies $\mathcal{P}$ can be constructed by the boxes $R,L,A,B$. We know each strand component is connected to the ground and positively oriented at the ground $(P_4)$. We will scan from left to right along the ground, and each time we encounter a strand end follow the strand to see which vertex it terminates in. We build a word $w \in L_{N_0}$ by assigning a letter to each strand end depending on which of the following cases we have: 
\begin{enumerate}
\item The strand terminates at a dot : $\bullet$.
\item The strand terminates in a bracket 
	\begin{enumerate}
	\item and the other strand terminating in this bracket ends to the right of the current strand end: \lcap
	\item and the other strand terminating in this bracket ends to the left of the current strand end: \rcap
	\end{enumerate}
\end{enumerate}
The properties $\mathcal{P}$ imply that one of the cases above occurs. Now use the bijection between $L_{N_0}$ and $L_{M_0}$ to get a word $w'\in L_{M_0}$ (in the alphabet $R,L,A,B$) whose image under $\delta_n$ is planar isotopic to the diagram we started with.
\end{proof}
\begin{prop}
$Y_0(n)$ spans $Diag_0(n)$.
\end{prop}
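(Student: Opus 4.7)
The plan is to give an explicit reduction algorithm that rewrites any diagram $D \in Diag_0(n)$ as a linear combination of elements of $Y_0(n)$. I process $D$ in stages, each preserving the equivalence class modulo $E_1$--$E_5$ and planar isotopy.

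First, I apply $E_4$ repeatedly to eliminate crossings. Each application replaces a crossing with a sum of two terms (an identity and a cap--cup), both of which have strictly fewer crossings than the original, so iteration terminates and yields a sum of diagrams satisfying $(P_2)$. Second, I handle closed strand components (components not attached to the ground). Since $G_1$ is univalent it cannot appear on a closed loop, so such a loop carries only brackets, and orientation consistency around the loop forces the bracket count to be even. Sliding adjacent brackets together by planar isotopy and applying $E_5$ reduces every closed loop to a bare loop, which then evaluates to $2$ via $E_3$. This establishes $(P_4)$.

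Third, I simplify each remaining ground-attached strand component. Sliding brackets along each strand and applying $E_5$ cancels adjacent bracket/inverse pairs. Orientation consistency determines the minimum number of remaining brackets on each strand: a lollipop (terminating in a dot from $G_1$, whose degree is $(+)$) requires zero brackets, and a cup with both ground endpoints positively oriented requires exactly one bracket, which gives $(P_3)$. Any strand component that would end up carrying both a dot and a bracket is annihilated by $E_2$. The result is a noncrossing collection of cups (each with one bracket) and lollipops.

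Finally, planar isotopy within the disc suffices to raise each dot into the closure of the sky, giving $(P_1)$, and to slide each cup's bracket around so that it encloses the rightmost ground endpoint, giving $(P_5)$. The hardest step, I expect, is the bookkeeping in phase three: verifying that bracket cancellation via $E_5$ always terminates with the parity forced by the endpoint orientations, and that the single residual bracket on each cup can be repositioned as required by $(P_5)$ using only the OSPA operad action and the given relations. A careful induction on the number of brackets per strand, combined with the noncrossing structure established in phase one, should make this rigorous.
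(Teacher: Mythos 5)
Your overall strategy is the same skein-theoretic reduction algorithm the paper uses, but two of your steps have genuine gaps. First, components not attached to the ground are not all closed loops: besides circles, a floating component can be an interval whose two ends both terminate in dots (orientation consistency then forces an odd number of brackets between the two dots). Your phase two, which argues that $G_1$ is univalent and hence ground-detached components carry only brackets, simply misses these, and your later appeal to $E_2$ misstates that relation: in $Diag_0$, $E_2$ kills a strand with a dot at \emph{each} end, not an arbitrary component carrying ``a dot and a bracket''. A ground-attached lollipop decorated with brackets is not zero --- bracket cancellation and the flip relation (the paper's $E_1$ and $E_4$) reduce it to $\pm$ the bare lollipop --- so taken literally your sentence would discard nonzero terms, while the genuine double-dot components are never removed. (The paper's step 4 treats exactly these two kinds of floating components: circles by $E_3$ and double dots by $E_2$.)

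Second, your final phase claims planar isotopy alone achieves $(P_1)$ and $(P_5)$; it does not. A crossing-free diagram can have a lollipop nested under a bracketed cap: it is a fixed point of your phases one through three, and no planar isotopy within the disc moves its dot into the closure of the sky. You need symmetric isotopy --- naturality of the univalent vertex, i.e.\ pulling the dot through strands, with any crossings so created resolved by the crossing relation --- which is precisely why the paper pulls all dots to the sky by symmetric isotopy as its \emph{first} step and then verifies that each later step preserves $P_1$. Similarly, which endpoint a bracket encloses is invariant under isotopy; redirecting a bracket toward the rightmost endpoint is the relation $E_4$ and costs a sign (harmless for spanning, but it is a relation, not an isotopy). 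Two smaller points: your relation labels do not match $Diag_0$ (there the crossing relation is $E_5$, bracket cancellation is $E_1$, the signed bracket flip is $E_4$), and reducing two adjacent same-facing brackets also requires the flip $E_4$, not only cancellation of inverse pairs.
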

\begin{proof}
We perform the following algorithm to reduce an arbitrary picture to the span of $Y_0(n)$:
\begin{enumerate}
\item[(1)] Pull all dots to the sky via symmetric isotopy.
\item[(2)] Use relation $E_5$ to remove all crossings.
\item[(3)] Use $E_1$ and $E_4$ to reduce the number of brackets on any strand component to at most $1$
\item[(4)] Use $E_3$ to remove any floating circles and $E_2$ to remove any double dots (these are the only possible strand components with no attachment to the ground).
\item[(5)] Use $E_4$ to direct every bracket towards the rightmost endpoint of its strand component.
\end{enumerate}

By construction, step $n$ of the algorithm ensures that every term of the linear combination of diagrams produced has property $P_n$ from Definition $7$. What we need to prove is that after each step $n$, $P_m$ will remain true for all terms produced and all $m<n$. This implies any term produced by the algorithm will satisfy all properties of Definition 7.

\begin{itemize}[]
\item Step 2 preserves $P_1$: Consider for each existing \dotmap\hspace{1.5mm}a path to the $\star$ which crosses no strand component. These paths will still cross no strand component after step 2 is performed, so each prior \dotmap\hspace{1.5mm}remains in the closure of the sky. Since no new \dotmap\hspace{1.5mm}is created, $P_1$ is preserved.
\item Step 3 preserves $P_1,P_2$: The underlying undecorated graph is unaffected, so $P_1$ and $P_2$ are preserved.
\item Step 4 preserves $P_1,P_2,P_3$: Circles and double dots are replaced by a constant, and removing strand components can not affect any of $P_1,P_2,$ or $P_3$ since no new crossings, \dotmap, or vertex can be introduced.
\item Step 5 preserves $P_1,P_2,P_3,P_4$: The underlying undecorated graph is unaffected, so $P_1$,$P_2$, and $P_4$ are preserved. $P_3$ is preserved since reversing bracket direction does not affect the number of vertices.
\end{itemize}
\end{proof}

\subsection{Using $L_{N_0}$ for computation in $Alg_0$}
To aid in computation, we use the language $L_{N_0}$ of Section $2.2$ to describe morphisms in $Alg_0$. Identify \dotmap\hspace{1.5mm}with $T(G_1):V \rightarrow \mathbbm{1}$, and \lcap\rcap\hspace{1.5mm}with $T(G_2): V^{\otimes 2} \rightarrow \mathbbm{1}$. Note that \dotmap\hspace{1.5mm}is the projection $v_1^*$, and that \lcap\rcap\hspace{1.5mm}is the determinant map. We note that $L_{N_0}(n)$ is the basis $T(Y_n)$ given in the previous section.

Values for maps in $L_{N_0}(2)$ on the basis $Z_2$ of $V^{\otimes 2}$ are:

\begin{center}
\begin{tabular}{| c | c | c |}
  \hline
   & \dotmap\dotmap & \lcap\rcap \\
  \hline			
  \textbf{00} & 0 & 0 \\
  \hline
  \textbf{01} & 0 & 1 \\
  \hline
  \textbf{10} & 0 & -1 \\
  \hline
  \textbf{11} & 1 & 0 \\
  \hline

\end{tabular}
\end{center}

Values for maps in $B_0(3)$ on the basis $Z_3$ of $V^{\otimes 3}$ are:

\begin{center}
\begin{tabular}{| c | c | c | c | c |}
  \hline
   & \dotmap\dotmap\dotmap & \dotmap\lcap\rcap & \lcap\dotmap\rcap & \lcap\rcap\dotmap \\
  \hline			
  \textbf{000} & 0 & 0 & 0 & 0 \\
  \hline
  \textbf{001} & 0 & 0 & 0 & 0 \\
  \hline
  \textbf{010} & 0 & 0 & 0 & 0\\
  \hline
  \textbf{100} & 0 & 0 & 0 & 0\\
  \hline
  \textbf{110} & 0 & -1 & -1 & 0\\
  \hline
  \textbf{101} & 0 & 1 & 0 & -1\\
  \hline
  \textbf{011} & 0 & 0 & 1 & 1\\
  \hline
  \textbf{111} & 1 & 0 & 0 & 0\\
  \hline  
\end{tabular}
\end{center}

We can see that \lcap\dotmap\rcap$=$\dotmap\lcap\rcap$+$\lcap\rcap\dotmap, which reflects the crossing relation in $Diag_0$, and that the maps of $L_{N_0}(3)=$ \{\dotmap\dotmap\dotmap,\dotmap\lcap\rcap,\lcap\rcap\dotmap\} form a basis of $B_0(3)$.

\section{$\text{URep}_ {\mathbb{F}_p}(\mathbb{F}_p^+)$}
\label{ch:charp}
Let $V_n=({\mathbb{F}_p}^n,\phi_n)$ where $\phi_n: \mathbb{F}_p^+ \rightarrow GL({\mathbb{F}_p}^n)$ is defined by $\phi_n(1)=J_n$, and $J_n$ is the Jordan block of dimension $n$ with eigenvalue $1$. In this section we set $V=V_2$ and study $Alg_p=\mathcal{P}(Rep_{\mathbb{F}_p}(\mathbb{F}_p^+,V))$. Taking the standard basis of ${\mathbb{F}_p}^2$, $v_0 = (1,0)$ and  $v_1 = (0,1)$, we use the isomorphism $\varphi:V \rightarrow V^{\ast}$ defined by $\varphi(v_0)=v_1^{\ast}$, $\varphi(v_1)=-v_0^{\ast}$. We will construct a diagrammatic planar algebra $Diag_p$, and an isomorphism of planar algebras to $Alg_p$.

\subsection{Defining $Diag_p$ and the map to $Alg_p$}
For all $p$ we include the generators and relations from $Diag_0$ in our presentation of $Diag_p$. For each $p$ we need one new generator and one new relation, both dependent on $p$.

\begin{mydef}
We present $\text{Diag}_p$ over $\mathbb{F}_p$ by the same generators and relations as $Diag_0$ along with the new generator $G_4$, which we require to be symmetric through relations $E_{s_1}$ and $E_{s_2}$, and relation $E_6$. We call $G_4$ the \textbf{jellyfish}.

$$G4: \underbrace{\vinclude{jellyfish}}_{2p-1}$$

\begin{center}
\vinclude{dotjellyfish} \vinclude{sym_p-1}
\end{center}

\begin{center}
\vinclude{jellyfish12} $\overset{\scriptscriptstyle{E_{s_1}}}{=}$ \vinclude{jellyfish} $\overset{\scriptscriptstyle{E_{s_2}}}{=}$ \vinclude{jellyfish1n}
\end{center}

\end{mydef}

We need to choose a map of representations in $Alg_p$ to be the image of $G_4$.

\begin{mydef}\label{jellymap} $V^{\otimes{n}}$ has basis $Z_n=\{v_0,v_1\}^{\otimes n}$ indexed by $B=\{0,1\}^{\times n}$. Define the length of a basis vector, $l_n:Z_n \rightarrow \mathbb{N}$, by $l(z_b) = \sum_i b_i$, and define $W_i=l^{-1}(i)$. Define a linear map $j_p: V ^{\otimes 2p-1} \rightarrow \mathbb{F}_p$ via its values on $Z_n$:
$$
j_p(v) = \left\{
        \begin{array}{ll}
            1 & \quad v \in W_{p-1},W_{2(p-1)} \\
            0 & \quad else
        \end{array}
    \right.
$$
\end{mydef}

In fact $j_p$ is a map of representations; take $q=p,n=2p-1$ in Proposition 18 of the appendix.

\begin{prop} For each prime $p$ there is a map of planar algebras $T_p:\text{Diag}_p \rightarrow Alg_p$ determined by the values $T_p(G_1)=v_1^{\ast},T_p(G_2)=\varphi,T_p(G_3)=\varphi^{-1}$ and $T_p(G_4)=j_p$, where $j_p$ is as in Definition \ref{jellymap}.
\end{prop}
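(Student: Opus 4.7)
The map $T_p$ is specified on generators, so the content of the proposition is well-definedness: every defining relation of $\mathrm{Diag}_p$ must hold in $\mathrm{Alg}_p$ after translation through $T_p$. The relations $E_1$ through $E_5$ inherited from $\mathrm{Diag}_0$ were already verified in the proof of Proposition~\ref{prop:ismap0}, so the new work is the three relations $E_{s_1}$, $E_{s_2}$, $E_6$ attached to the new vertex $G_4 \mapsto j_p$. One further prerequisite is that $j_p$ actually lives in $\mathrm{Hom}_{\mathbb{F}_p^+}(V^{\otimes 2p-1},\mathbbm{1})$, i.e.\ is $\mathbb{F}_p^+$-equivariant; the paper grants this via the forward reference to the appendix proposition at $q=p$, $n=2p-1$, which I take as a black box here.

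The two symmetry relations are essentially free. By Definition~\ref{jellymap}, $j_p(z_b)$ depends on the multi-index $b\in\{0,1\}^{2p-1}$ only through the length $l(b)=\sum_i b_i$, which is a symmetric function of the coordinates. Hence for every $\sigma\in S_{2p-1}$ acting on tensor factors, $j_p\circ\sigma=j_p$. In particular, composing $j_p$ with the crossing (which realizes the transposition action in $\mathrm{Alg}_p$) reproduces $j_p$, giving $E_{s_1}$, and composing with the long cycle on all $2p-1$ legs reproduces $j_p$, giving $E_{s_2}$.

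The substantive step is $E_6$. On the left, the dotted jellyfish is $z_c\mapsto j_p(z_c\otimes v_1)$ on $V^{\otimes 2(p-1)}$; by the $E_{s_i}$ just checked, the choice of dotted leg is immaterial, and by Definition~\ref{jellymap} this equals $1$ exactly when $l(c)\in\{p-2,\,2p-3\}$ and $0$ otherwise. On the right, I expand $Sym_{p-1}=\tfrac{1}{(p-1)!}\sum_{\sigma\in S_{p-1}}\sigma$ as an element of $\beta_{\smallblacksquare}(2(p-1))$, use Wilson's theorem $(p-1)!\equiv -1\pmod p$, and evaluate on the basis $Z_{2(p-1)}$: counting which $S_{p-1}$-orbits of multi-indices on the incoming and outgoing halves pair nontrivially under the $\varphi$'s closing the symmetrizer into the positive box, the answer should match the LHS characterization above.

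The main obstacle is this last $E_6$ calculation, and specifically the bookkeeping required to reconcile the planar-diagram depiction of $Sym_{p-1}$ (with its specific arrangement of input/output strands and intervening applications of $\varphi$) against the arrangement of the $2p-2$ remaining legs of the dotted jellyfish. A cleaner way to finish, bypassing a brute case analysis, is to observe that both sides are $\mathbb{F}_p^+$-invariant morphisms $V^{\otimes 2(p-1)}\to\mathbb{F}_p$ and hence lie in a relatively small invariant space; using the decomposition of $V^{\otimes 2(p-1)}$ into indecomposables $V_i$ for $1\le i\le p$, one can pin down the invariants up to a short list and collapse the verification to a single scalar check.
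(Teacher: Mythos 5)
Your reduction of the problem is the same as the paper's: $E_1$--$E_5$ are inherited from the proof of Proposition~\ref{prop:ismap0} (they hold over any field), equivariance of $j_p$ is the appendix proposition at $q=p$, $n=2p-1$, and $E_{s_1},E_{s_2}$ follow because $j_p$ depends only on the length of a basis vector. The gap is in $E_6$, and it starts with your identification of the dotted jellyfish as $z_c\mapsto j_p(z_c\otimes v_1)$. The dot is the degree-$(+)$ vertex $v_1^{\ast}$, and every jellyfish leg is also an input, so the dot can only be attached to a leg through an orientation-reversing bracket; since $\varphi(v_0)=v_1^{\ast}$, the rotated dot feeds $\pm v_0$ (not $v_1$) into the leg. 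Hence the dotted jellyfish is supported on basis vectors of $V^{\otimes 2(p-1)}$ of length in $\{p-1,\,2(p-1)\}$, and the characterization actually used in the paper's verification is ``value $1$ exactly on $Z_{p-1}$,'' not your $\{p-2,\,2p-3\}$. Your version cannot be rescued: any right-hand side built from caps and $Sym_{p-1}$ kills every basis vector whose numbers of $\high$'s and $\low$'s differ (some cap receives $\low\low$ or $\high\high$), so it is supported only on length $p-1$, and with support at length $p-2$ on the left the relation $E_6$ would simply be false. This mismatch is a sign the leg/bracket bookkeeping you flagged as ``the main obstacle'' was not resolved.

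Beyond that, the $E_6$ check is never actually carried out: ``the answer should match'' is not an argument, Wilson's theorem plays no role (the $(p-1)!$ terms of the symmetrizer cancel against the $1/(p-1)!$ normalization), and the proposed shortcut via invariants does not obviously close the gap, since $\dim \mathrm{Hom}_{\mathbb{F}_p^+}(V^{\otimes 2(p-1)},\mathbbm{1})$ equals the number of indecomposable summands of $V^{\otimes 2(p-1)}$, which is far from a ``short list.'' The paper's verification is short and you would need its three ingredients (or equivalents): first, the right-hand side vanishes on every $Z_i$ with $i\neq p-1$ by the cap argument above; second, the right-hand side is symmetric under all of $S_{2(p-1)}$ --- the only nontrivial transposition is the middle one, which follows from the crossing relation $E_5$ together with the fact that the right partial trace of the Jones--Wenzl projection on $p-1$ strands vanishes in characteristic $p$; third, by this symmetry it suffices to evaluate on the single vector $\high^{p-1}\cdot\low^{p-1}$, where each of the $(p-1)!$ terms of $Sym_{p-1}$ contributes $1$, giving $\frac{(p-1)!}{(p-1)!}=1$ in agreement with the (corrected) left-hand side. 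The second ingredient --- symmetry of the right-hand side via the vanishing partial trace --- is the key idea missing from your outline.
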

\begin{proof} The computations for relations $E_1$ through $E_5$ performed in $Alg_0$ all hold in any field, so we just need to check that $E_{s_1}$, $E_{s_2}$, and $E_6$ are satisfied.
\begin{itemize}[]
\item $E_{s_1}$ and $E_{s_2}$ are satisfied: $j_p$ is symmetric by definition, as the length of a basis vector is invariant under permutation of the factors.

\item $E_6$ is satisfied: The image of the LHS vanishes on any vector of $Z_i$ when $i \neq p-1$, and takes value $1$ on any vector of $Z_{p-1}$. We show the image of the RHS has the same property. Suppose $v \in Z_i$ and $i \neq p-1$. Then as the number of factors equal to $\high$ and $\low$ in $v$ are not the same, some cap will have input $\low\low$ or $\high\high$, therefore the map will vanish. Now suppose $v \in Z_{p-1}$. The image of the RHS is invariant under transposing any pair of the first $p-1$ factors or last $p-1$ factors by definition. To show it is symmetric, we need to show invariance under transposing factors $p$ and $p+1$. This follows from applying the crossing relation $E_5$, and using the fact that the right partial trace of the symmetrizer on $p-1$ strands is $0$ in characteristic $p$ \cite{JWformula}. Now by symmetry we may assume $v=\high^{p-1} \cdot \low^{p-1}$, so each term of the symmetrizer will take value $1$, and as there are $(p-1)!$ terms we get $\frac{(p-1)!}{(p-1)!}=1$.
\end{itemize}

\end{proof}

\subsection{Constructing $X_p(n)$ and showing independence of $T_p(X_n)$}
The indecomposable representations that appear in $\otimes$ powers of $V$ are enumerated by the sequence $(V_n)_{n \in [1..p]}$. We have $V_2 \otimes V_n \simeq V_{n-1} \oplus V_{n+1}$ when $1 < n < p$, and $V_2 \otimes V_p \simeq V_p \oplus V_p$, so every indecomposable appears as a direct summand of some $V_2^{\otimes n}$ and all indecomposables are self dual. A nice exposition of this and further references are found in \cite{INVpBook}. We get the following fusion graph $\Gamma_p(V)$.
\begin{center}
\includegraphics[scale=0.9]{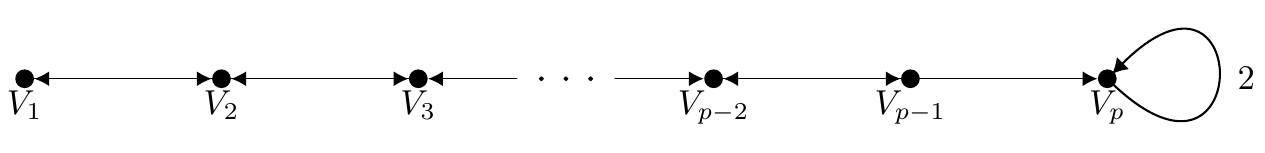}
\end{center}
Note that this is the underlying graph for the automata $M_p$ of Section $2.2$. We consider the corresponding language $L_{M_p}$.


\begin{mydef}
To define a map $\delta_{n,p} :L_{M_p}(n) \rightarrow \text{Diag}_p(n)$ identify concatenation in a path word with gluing of diagrams, and identify each letter of the alphabet with a portion of a picture as below, where $\emptyset$ signifies the end of the word. As before we glue these portions of images end to end to get the image of a word.

\begin{figure}[H]

\hspace*{\fill}
\includegraphics{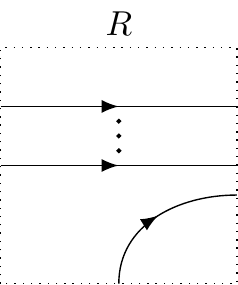} \hfill \includegraphics{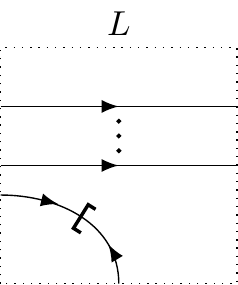} \hfill \includegraphics{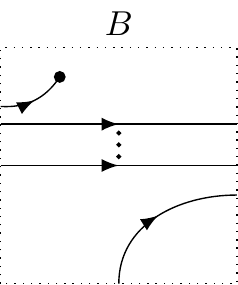} \hfill \includegraphics{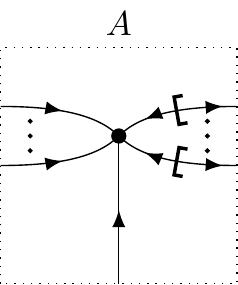} \hfill \includegraphics{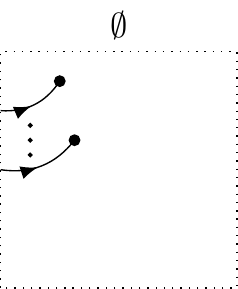}
\hspace*{\fill}

\caption{The image of individual letters under $\delta_{n,p}$}
\end{figure}

\begin{remark}
The number of horiztonal strands in $R$ and $L$ varies, and is equal to the excess number of $R$s to $L$s in the segment prior to the current letter of $w$. Both $B$ and $A$ always have $p-1$ strands incoming and outgoing. The number of incoming strands to $\emptyset$ is the excess of $R$s to $L$s in $w$.
\end{remark}

We will make a distinction between the strands at the left, bottom, and right boundary faces of our diagrams by referring to them as the \textbf{left input} ($I_{\leftarrow}$), \textbf{bottom input} ($I_{\downarrow}$), and \textbf{output} ($O$) so that each diagram can be interpreted as a map from $I_{\leftarrow} \otimes I_{\downarrow}$ to $O$. In particular we have:  \begin{align*}
T_{p}(R) &: V^{\otimes l} \otimes V \rightarrow V^{\otimes l+1} \\
T_{p}(L) &: V^{\otimes l+1} \otimes V \rightarrow V^{\otimes l}\\
T_{p}(B) &: V^{\otimes p-1} \otimes V \rightarrow V^{\otimes p-1} \\
T_{p}(A) &: V^{\otimes p-1} \otimes V \rightarrow V^{\otimes p-1}
\end{align*}

When the context is clear we will use $X$ and $T_p(X)$ interchangeably. The image of $\delta_{n,p}$ forms a graded subcategory in $Diag_p$, where the grade of $\delta_{n,p}(w)$ is equal to the length of $w$ (i.e. the number of bottom inputs). The gluing operation in $Diag_p$ clearly respects this grading, and composition of morphisms $f:V^{\otimes l} \otimes V^{\otimes n} \rightarrow V^{\otimes j}$ and $g:V^{\otimes h} \otimes V^{\otimes m} \rightarrow V^{\otimes k}$ is defined when $j=h$, giving a morphism from $l$ to $k$ in grade $n+m$.  
\end{mydef}

In the figure below we see an example where $w=RRABA$ when $p=3$.
\begin{figure}[H]
\centering
\includegraphics[scale=0.9]{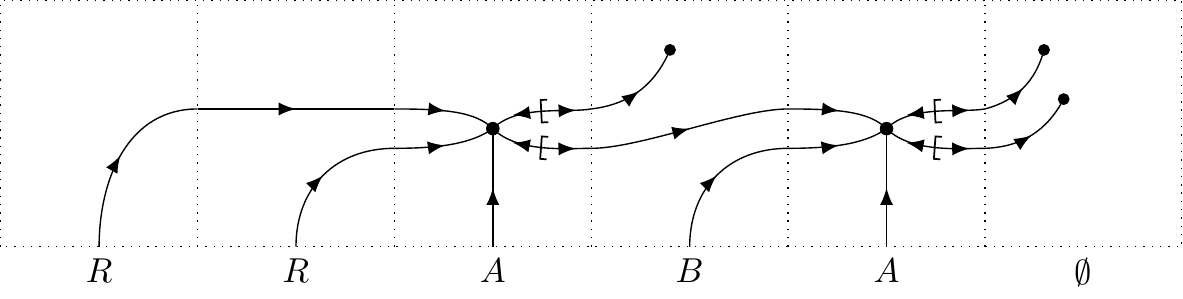}
\caption{The image of $RRABA$ under $\delta_{n,p}$ for $n=6,p=3$}
 \end{figure}

The images of $\delta_{n,p}$ form our sequence $X_p(n)$. We give an order on $X_p(n)$, and a few useful lemmas before showing independence of $T_p(X_n)$.

\begin{mydef} Define an order on $L_{M_p}(n)$, denoted $<_p$, as follows. First order lexicographically with $R>_p L$. Then, among two words with the same $RL$ part, order reverse lexicographically with $B<_p A$. Push this order to $X_p(n)$ via $\delta_{n,p}$, i.e. $x <_p y \iff \delta_{n,p}(x) <_p \delta_{n,p}(y)$.
\end{mydef}

For example, take $p=3,n=5$. We get the following words in $L_{M_3}(5)$ listed in ascending order (using color to separate the $RL$ part and the $AB$ part):

\noindent \color{blue}RLRLR\color{black} $<_p$ \color{blue}RLRR\color{black}B $<_p$ \color{blue}RLRR\color{black}A
$<_p$ \color{blue}RR\color{black}BBB $<_p$ \color{blue}RR\color{black}ABB $<_p$ \color{blue}RR\color{black}BAB
 $<_p$ \color{blue}RR\color{black}AAB $<_p$ \color{blue}RR\color{black}BBA $<_p$ \color{blue}RR\color{black}ABA
 $<_p$ \color{blue}RR\color{black}BAA $<_p$ \color{blue}RR\color{black}AAA

\begin{lemma}\label{computes} The following hold for all $\alpha$ in $\mathbb{F}_p$, where the subscript on a map indicates the bottom input.
\begin{enumerate}
\item $A_\low(\high^{p-1} + \alpha \low^{\hspace{0.5mm}p-1})=\high^{p-1} + (\alpha + 1) \cdot \low^{\hspace{0.5mm}p-1}$
\item $B_\high(\high^{p-1} + \alpha \low^{\hspace{0.5mm}p-1})=\high^{p-1}$
\end{enumerate}
\end{lemma}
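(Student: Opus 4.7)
The plan is to unfold the diagrams $A$ and $B$ in terms of the jellyfish generator $G_4$, then apply $T_p$ and exploit the explicit support condition for $T_p(G_4) = j_p$ in Definition~\ref{jellymap} to compute the action on the specified basis vectors directly. Because $A$ and $B$ are defined from $G_4$ together with caps, crossings, and the isomorphism $\varphi$, the matrix coefficient of $A_\low$ or $B_\high$ between two basis vectors of $V^{\otimes p-1}$ reduces, via the duality $\mathrm{Hom}(V^{\otimes p-1}, V^{\otimes p-1}) \simeq \mathrm{Hom}(V^{\otimes 2p-2}, \mathbbm{1})$ built from $\varphi$, to a scalar of the form $\epsilon \cdot j_p(u)$, where $\epsilon \in \{\pm 1\}$ is a product of the disoriented-cap signs and $u$ is the specific element of $Z_{2p-1}$ obtained by concatenating the left input, the bottom input, and the (suitably dualized) output basis vector.

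First I would handle the three cases in which the support condition on $j_p$ forces an essentially unique output. For $A_\low(\low^{\,p-1})$ and $B_\high(\high^{p-1})$ the combined weight of left and bottom inputs, combined with the constraint $l(u) \in \{p-1, 2(p-1)\}$, selects a single output basis vector; a direct sign computation then shows its coefficient is $+1$, giving $\low^{\,p-1}$ and $\high^{p-1}$ respectively. For $A_\low(\high^{p-1})$ the same weight constraint selects the two outputs $\high^{p-1}$ (lying in the $W_{2(p-1)}$ stratum) and $\low^{\,p-1}$ (in the $W_{p-1}$ stratum), and both coefficients come out to $+1$, matching the claimed $\high^{p-1} + \low^{\,p-1}$. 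Adding the two $A_\low$ identities yields item (1) of the lemma by linearity.

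The main obstacle is the vanishing $B_\high(\low^{\,p-1}) = 0$ of item (2). Here either (i) the weight condition already rules out every output $T$, in which case the statement is immediate, or (ii) several outputs $T$ of a common weight $l(T)$ contribute and one must show their contributions cancel. In the latter case I would invoke the symmetry of the jellyfish from $E_{s_1}$ and $E_{s_2}$, which forces the coefficient to depend only on $l(T)$, and then identify the common coefficient via relation $E_6$: the relevant composite of $j_p$ with the remaining caps becomes a right partial trace of $\mathrm{Sym}_{p-1}$, which is zero in characteristic $p$ by the Jones--Wenzl formula already used in verifying $E_6$. Combining the $B_\high$ computations and linearity yields item (2), completing the lemma.
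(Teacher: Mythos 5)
Your handling of item (1) is essentially the paper's own computation: $A$ really is the jellyfish with $p-1$ legs bent up into outputs, the matrix coefficient against an output basis vector is (up to sign) $j_p$ evaluated on the inputs concatenated with the dualized output, and the support condition $l\in\{p-1,2(p-1)\}$ singles out exactly the outputs $\high^{p-1}$ and $\low^{\hspace{0.5mm}p-1}$ as you say (though you have the strata swapped: the output $\high^{p-1}$ comes from total weight $p-1$, and $\low^{\hspace{0.5mm}p-1}$ from $2(p-1)$). That part, modulo the sign bookkeeping you defer, matches the intended proof.

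The genuine gap is item (2): the box $B$ is not built from the jellyfish $G_4$ at all, so the framework ``matrix coefficient $=\epsilon\cdot j_p(u)$'' does not apply to it. In the definition of the boxes for $\delta_{n,p}$, $B$ closes off the first left-input strand with the dot $G_1$ (i.e.\ $v_1^{\ast}$) and routes the bottom input to the last output, so $B_x(\high\otimes y)=y\otimes x$ and $B_x(\low\otimes y)=0$; both claims of item (2) are then one-line computations, with no cancellation and no need for $E_6$, the symmetry relations, or the vanishing partial trace of $\mathrm{Sym}_{p-1}$. Moreover, your reading cannot be repaired: if $B$ were the jellyfish-based contraction, the weight constraint for $B_\high(\high^{p-1})$ would force the dualized output to have length $p-2$, which is met by $p-1$ distinct basis vectors all carrying the same sign, so the value would be $\pm\sum_{l(w)=1}w$ (for $p>2$ visibly not $\high^{p-1}$), and the same count shows $B_\high(\low^{\hspace{0.5mm}p-1})$ would be a nonzero sum rather than $0$. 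In other words, for the map you are implicitly computing with, the statement of the lemma is false, so neither your case (i) nor the proposed $E_6$/partial-trace cancellation in case (ii) can close the argument; the fix is simply to use the actual definition of $B$.
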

\begin{proof}

\begin{enumerate}
\item The jellyfish takes value $1$ on a basis vector when that vector has $p-1$ or $2(p-1)$ factors equal to $\high$, and vanishes otherwise. Further, $\varphi(\low)=\high,\varphi(\high)=-\low$ so that $A_\low(\high^{p-1})=\high^{p-1} + \low^{p-1}$ and $A_\low(\low^{p-1})=\low^{p-1}$, and the result follows by linearity of $A$.
\item $B$ vanishes when a $\low$ is set to the first factor of its left input, so $B_\high(\alpha $\low$^{p-1})=0$.  When a $\high$ is set to the first factor of its left input we have $B_x(\high y)=yx$, so $B_\high(\high^{p-1})=\high^{p-1}$, and the result follows by linearity of $B$.
\end{enumerate}
\end{proof}

\begin{lemma}\label{spaninv}
 Let $Z^-$ be the set of all standard basis vectors of $V^{\otimes p-1}$ of length less than $p-1$ as in Definition \ref{bits} (i.e. all standard basis vectors excluding $\high^{\otimes p-1}$). The span of $Z^-$ is invariant under $A_\low$ and $B_\high$.
\end{lemma}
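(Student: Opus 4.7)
The plan is to reduce to a pointwise check on the standard basis of $V^{\otimes(p-1)}$ and then apply explicit formulas for $A_\low$ and $B_\high$, most of which are already isolated in the proof of Lemma \ref{computes}. By linearity, $\text{span}(Z^-)$ is invariant under $A_\low$ and $B_\high$ exactly when, for each basis vector $v \in Z_{p-1}$ with $l_{p-1}(v) < p-1$, the vectors $A_\low(v)$ and $B_\high(v)$ have zero coefficient on $\high^{\otimes p-1}$; all other standard basis vectors already lie in $Z^-$.

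For $B_\high$, the required fact is essentially already contained in the proof of Lemma \ref{computes}, which extracts the pointwise rules $B_x(\low \otimes w) = 0$ and $B_x(\high \otimes w) = w \otimes x$ for every $x \in V$ and $w \in V^{\otimes(p-2)}$. If $v$ begins with $\low$, then $B_\high(v) = 0 \in \text{span}(Z^-)$. If instead $v = \high \otimes w$, then $l_{p-2}(w) = l_{p-1}(v) - 1 < p-2$, so $B_\high(v) = w \otimes \high$ satisfies $l_{p-1}(w \otimes \high) = l_{p-2}(w) + 1 < p-1$, placing it in $Z^-$.

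For $A_\low$ I would derive an analogous pointwise description by unfolding the diagram for $A$ in terms of the jellyfish and the cups and brackets appearing in it. Each cup expands as a sum over a basis for $V$, and each resulting summand feeds the jellyfish a vector in $V^{\otimes(2p-1)}$; by Definition \ref{jellymap}, only summands whose jellyfish-input has length $p-1$ or $2(p-1)$ contribute. For such a summand to produce $\high^{\otimes p-1}$ on the output side, the cup choices must place all $p-1$ factors $\high$ on the output legs, forcing a specific constraint on the length of $v$. A short case split on $l_{p-1}(v)$, combined with the symmetry relations $E_{s_1}$ and $E_{s_2}$ to collapse cases related by permutations of legs, then shows that no surviving summand exists whenever $l_{p-1}(v) < p-1$.

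The main obstacle is the $A_\low$ case: although the length bookkeeping is elementary once the pointwise formula is written down, one must first traverse the diagram carefully to obtain that formula, tracking which factor of $\varphi$ (recalling $\varphi(\low)=\high$ and $\varphi(\high)=-\low$) lands on which strand before the jellyfish is applied. The $B_\high$ case is genuinely easier because the corresponding computation is already done inside the proof of Lemma \ref{computes}, and the desired invariance falls out immediately from those formulas.
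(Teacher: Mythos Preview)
Your treatment of $B_\high$ matches the paper's exactly. For $A_\low$, your plan would succeed, but it is more indirect than what the paper does and leaves the core step unexecuted. The paper does not isolate the coefficient of $\high^{\otimes p-1}$ and then case-split; instead it proves the stronger fact that $A_\low$ cannot increase length at all. The argument is a one-line length count: if the left input $v$ has length $i$ and the bottom input is $\low$ (length $0$), then for a basis vector of length $j$ to appear on the output (where $\varphi$ is applied to each output leg, so the output side contributes $p-1-j$ to the jellyfish input), the jellyfish sees total length $i + 0 + (p-1-j)$, which must equal $p-1$ or $2(p-1)$. Hence $j = i$ or $j = i - (p-1)$, so $j \le i$. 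No case split on $l_{p-1}(v)$ is needed, and the symmetry relations $E_{s_1}, E_{s_2}$ play no role here; invoking them is extra machinery. What your outline calls the ``main obstacle'' is precisely this length bookkeeping, and once you write it down you will find it collapses to the paper's two-line computation rather than the longer diagram traversal you anticipate.
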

\begin{proof}
$span(Z^-)$ is invariant under $B_\high$, since $B_\high(\high \otimes v) = v \otimes \high$ and $B_\high(\low \otimes v)=0$, so that $B_\high$ preserves the length of a basis vector or vanishes. Suppose some basis vector $v$ has length $i$. Then since the jellyfish is defined to take value $1$ on basis vectors of length $p-1$ or $2(p-1)$ and vanish otherwise, $A_\low(v)$ is the sum of all basis vectors of length $j$ such that $i + (p-1-j)=p-1$ or $i+(p-1-j)=2(p-1)$ (we add $p-1-j$ to $i$ instead of $j$, since we apply $\varphi$ to each of the factors in the right output), so that must have $j=i$ or $j=i-(p-1)$. We see any basis vector appearing in $A_\low(v)$ has length less than or equal to the length of $v$, so that $span(Z^-)$ is invariant under $A_\low$.
\end{proof}

\begin{prop} The sequence $T_p(X_n)$ is linearly independent.
\end{prop}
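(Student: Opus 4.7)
The plan is to apply Lemma \ref{indep} with $S = L_{M_p}(n)$ ordered by $<_p$, with $g = T_p \circ \delta_{n,p}$, and with $f : S \to V^{\otimes n}$ defined letter-by-letter by extending the characteristic-$0$ pairing: $R, B \mapsto \high$ and $L, A \mapsto \low$, interpreting concatenation as tensor product. This way, evaluating $T_p(\delta_{n,p}(x))(f(y))$ is computed by sweeping left-to-right through $\delta_{n,p}(x)$ while feeding in the bottom inputs from $f(y)$, and tracking the ``top state'' that lives on the horizontal strands between boxes.

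Condition (1) of Lemma \ref{indep}, namely $g(x)(f(x)) \neq 0$, proceeds exactly as in Proposition \ref{prop:indep0} on $\overset{\leftarrow}{x}$, yielding top state $\pm \high^{d}$ at current height $d$; in particular, if $\overset{\rightarrow}{x}$ is nonempty, then at the transition we are at state $\pm \high^{p-1}$. Lemma \ref{computes} shows that each $B_\high$ and each $A_\low$ preserves the coefficient of $\high^{p-1}$ (keeping it at $\pm 1$), so the top state at the end of $\delta_{n,p}(x)$ still has a $\pm 1$ component on $\high^{p-1}$. Composing with the string of dots that terminate the diagram, each of which is $v_1^{\ast}$ and therefore detects only $\high$, yields $\pm 1 \neq 0$.

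For Condition (2), let $x <_p y$ and split on where the inequality is first witnessed. If $\overset{\leftarrow}{x} \neq \overset{\leftarrow}{y}$, take $i$ to be the first index of disagreement; since the ordering is lexicographic with $R >_p L$, we have $x_i = L$ and $y_i = R$. Because $x$ and $y$ agree strictly before $i$, the top state entering position $i$ in the computation of $g(x)(f(y))$ is identically $\pm \high^{d_{i-1}}$, just as in the characteristic-$0$ case. The $L$ at position $i$ caps the rightmost top strand against the bottom input from $f(y)$; but the rightmost top strand is $\high$ and the bottom input is $\high$, and since this cap is a determinant, it evaluates to $\det(\high,\high) = 0$, killing $g(x)(f(y))$.

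If $\overset{\leftarrow}{x} = \overset{\leftarrow}{y}$, let $j$ be the rightmost index where $\overset{\rightarrow}{x}$ and $\overset{\rightarrow}{y}$ disagree; the reverse-lexicographic convention with $B <_p A$ forces $x_j = B$ and $y_j = A$. The main obstacle, and the crux of the whole argument, is to show that the operation applied at position $j$, namely $B_\low$, unconditionally lands in $\mathrm{span}(Z^-)$, regardless of whatever possibly mismatched operations occurred at positions $1, \dots, j-1$ in $\overset{\rightarrow}{w}$. This follows from the explicit description of $B$ extracted in the proof of Lemma \ref{computes}(2): on a basis vector $v \in V^{\otimes p-1}$, the map $B_\low$ vanishes when the first factor of $v$ is $\low$, and otherwise produces $(v_2, \dots, v_{p-1}, \low)$, which has $\low$ in its last tensor factor and so cannot equal $\high^{p-1}$. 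Hence the top state immediately after position $j$ lies in $\mathrm{span}(Z^-)$. By the choice of $j$, the letters of $x$ and $y$ agree on positions $j+1, \dots, m$, so on those positions the bottom inputs from $f(y)$ are precisely the matched inputs for $x$'s letters, and the operations applied are $B_\high$ and $A_\low$. Lemma \ref{spaninv} then shows $\mathrm{span}(Z^-)$ is preserved through the remainder of $\overset{\rightarrow}{w}$. Capping off with the terminating string of dots annihilates $\mathrm{span}(Z^-)$, giving $g(x)(f(y)) = 0$ as required, and Lemma \ref{indep} delivers the result.
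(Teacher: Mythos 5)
Your proof is correct and follows essentially the same route as the paper: Lemma \ref{indep} with the order $<_p$, the same pairing $f$ (with $R,B \mapsto \high$ and $L,A \mapsto \low$), nonvanishing on the diagonal via Lemma \ref{computes}, and vanishing above the diagonal by splitting on whether the first disagreement lies in the $RL$ part (a bracket receives $\high\high$) or the $AB$ part (where $B_\low$ lands in $\mathrm{span}(Z^-)$, which is then preserved by Lemma \ref{spaninv} and killed by the terminal dots). Your explicit observation that $B_\low$ sends \emph{every} vector into $\mathrm{span}(Z^-)$, irrespective of mismatches at earlier $AB$ positions, just makes precise a point the paper uses implicitly.
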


\begin{proof} Following the notation of Lemma 2, take $S=(L_{M_p},<_p)$ and $g=T_p \circ \delta_{n,p}$. To define $f$, assign to each $x\in L_{M_p}$ a vector $f(x)\in V^{\otimes n}$ by identifying $R$ and $B$ with \high, $L$ and $A$ with \low, and concatenation with $\otimes$ (e.g. for $p=3$, $f(RLRRABBA)=$\high\low\high\high\low\high\high\low). We prove each point of Lemma \ref{indep}:

\begin{enumerate}

\item $g(x)(f(x)) \neq 0$: If no $A$ or $B$ appears in $x$, the same proof holds as in the characteristic $0$ case. Otherwise, as each $R$ pairs with \high, the first $A$ or $B$ will have left input of $\high^{p-1}$ since $\overset{\leftarrow}{x}$ has depth $p-1$. By (1) and (2) of Lemma \ref{computes}, the left input of $\emptyset$ will be $\high^{p-1}+ \alpha \low^{p-1}$ for some $\alpha \in \mathbb{F}_p$. Applying $\emptyset$ to $\high^{p-1}+ \alpha \low^{p-1}$ we get $1$, so $g(x)(f(x))=1$ for each $x \in S$. 

\item $x<y \implies g(x)(f(y))=0$: If $\overset{\leftarrow}{x} \neq \overset{\leftarrow}{y}$, we proceed as in the proof of Proposition \ref{prop:indep0}. If $\overset{\leftarrow}{x} = \overset{\leftarrow}{y}$ and $x<y$, then we have

\begin{align*} 
f(y) &= s \cdot \low \cdot v \\
f(x) &= s' \cdot \high \cdot v
\end{align*}

We show in $g(x)(f(y))$ that some $\low$ will appear in the left input to $\emptyset$, therefore $g(x)(f(x))=0$ since the value of \dotmap\hspace{1.5mm}on \low\hspace{1.5mm}is $0$. In fact, the only basis vector on which $\emptyset$ does not vanish is $\high^{p-1}$. Since we put a \low\hspace{1.5mm}in the bottom input of $B$ in $g(x)(f(y))$, the right output will be in span$(Z^-)$. Since the factor $v$ is paired with $g(x)$ as defined by $f$, we will then apply some sequence of $A_\low$s and $B_\high$s before applying $\emptyset$. By Lemma \ref{spaninv} the left input of $\emptyset$ will then be in span$(Z^-)$, and $\emptyset$ vanishes on span$(Z^-)$. 
\end{enumerate}
\end{proof}

\subsection{Constructing a spanning set for $Diag_p(n)$}
\label{sec:yn_span_p}
Consider the language $L_{N_p} (\simeq L_{M_p})$ from Section $2.2$. We use $L_{N_p}(n)$ to index a sequence of diagrams $Y_p(n)$. Then we give a list of properties $\mathcal{P}$ that can be checked for any diagram, and show these properties give an implicit description of the terms of $Y_p(n)$, i.e. a diagram is a term of $Y_p(n)$ exactly when all properties of $\mathcal{P}$ are satisfied. Finally we give an algorithm to write an arbitrary diagram of $\beta_p(n)$ in terms of diagrams satisfying $\mathcal{P}$, so that $Y_p(n)$ spans $\beta_{p}(n)$.

\begin{mydef} As in Definition $15$, we define a map $\gamma_{n,p}: L_{N_p}(n) \rightarrow Diag_p(n)$ using the boxes below. Define $Y_p(n)$ to be the images of $\gamma_{n,p}$.
\end{mydef}

\begin{center}
\hspace*{\fill}
\vinclude{lbrak} \hfill \vinclude{rbrak} \hfill \vinclude{ypdot} \hfill \vinclude{star}
\hspace*{\fill}
\end{center}

\begin{figure}[H]
\centering
\includegraphics[scale=0.9]{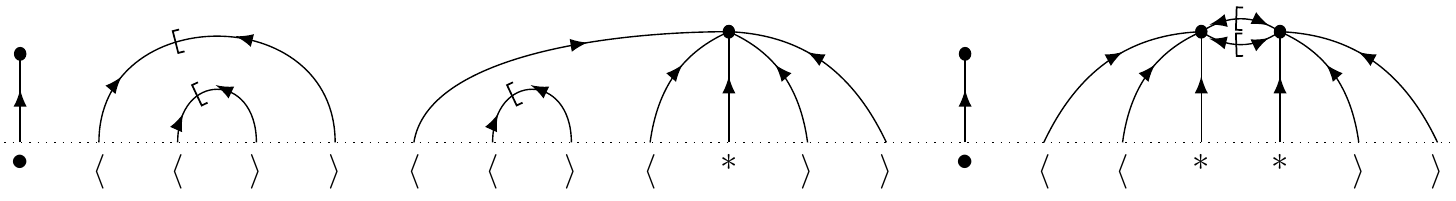}
\caption{An example term from $Y_p(19)$, the image under $\gamma_{n,p}$ of \dotmap\lcap\lcap\rcap\rcap\lcap\lcap\rcap\lcap$*$\rcap\rcap\dotmap\lcap\lcap$**$\rcap\rcap}
\end{figure}

\begin{mydef} Given a diagram $U$ and a point $x$ in the complement of $U$, define the distance from $x$ to the sky to be the minimal number of strand crossings among all paths from $x$ to the sky (these paths should not pass through a vertex). We will say $U \in Diag_p(n)$ satisfies $\mathcal{P}$ if it satisfies:
\begin{enumerate}[($P_1$)]
\item Each dot and jellyfish touches the closure of the sky.
\item There are no crossings in U.
\item The distance from any point of the complement of $U$ to the sky is less than $p$.
\item Any two jellyfish have less than $p$ of their legs connected.
\item No jellyfish has a dot connected to any of its legs.
\item Any strand component or strand terminating at a jellyfish has at most $1$ vertex.
\item All strand components are attached to the ground and positively oriented at the ground.
\item Any bracket encloses the rightmost endpoint of a strand component.

\end{enumerate}
\end{mydef}

\begin{prop}
The terms of $Y_p(n)$ are exactly the diagrams which satisfy $\mathcal{P}$.
\end{prop}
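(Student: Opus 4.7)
The plan is to prove both inclusions: (i) every diagram in $Y_p(n)$ satisfies the properties $\mathcal{P}$, and (ii) every diagram satisfying $\mathcal{P}$ arises as $\gamma_{n,p}(w)$ for some $w \in L_{N_p}(n)$.

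For direction (i), I would argue by inspection of the four building blocks \lcap, \rcap, \dotmap, and $*$, together with the depth constraints that define $L_{N_p}$. The blocks are drawn so that no crossings are introduced ($P_2$), every dot and jellyfish sits at the top of its block ($P_1$), every strand connects down to the ground ($P_7$), every bracket encloses its rightmost end ($P_8$), and at most one vertex sits on any strand ($P_5, P_6$). Property $P_3$ follows because a word accepted by $N_p$ has $d_i(w) \leq p-1$, and the depth at position $i$ equals the number of strands separating a point just above the $i$th ground endpoint from the sky. Property $P_4$ follows because two consecutive jellyfish appear when two $*$s sit at depth $p-1$ with only $*$ or \lcap/\rcap letters (at height $p-1$) in between, which corresponds to sharing exactly $p-1$ legs, strictly less than $p$.

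For direction (ii), given $U$ satisfying $\mathcal{P}$, I would scan left to right along the ground and associate a letter to each strand endpoint: \dotmap if the strand terminates at a dot, $*$ if it terminates at a jellyfish, \lcap if it terminates in a bracket whose partner endpoint lies to the right, and \rcap otherwise. Property $P_7$ ensures every strand endpoint is encountered, and $P_8$ makes the \lcap versus \rcap assignment unambiguous. One then verifies that $\gamma_{n,p}$ of the resulting word is symmetrically isotopic to $U$ by reconstructing $U$ block-by-block from left to right; the key observation is that the order of encounters along the ground, together with $P_1$--$P_8$, uniquely determines the topological type of the diagram.

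The main obstacle is checking that the word $w$ just constructed actually lies in $L_{N_p}(n)$, i.e.\ that its letters obey the automaton's depth constraints. Non-negativity of the depth $d_i(w)$ is immediate from planarity combined with $P_7$. The upper bound $d_i(w) \leq p-1$ is where $P_3$ is used essentially: the depth of $w$ at $i$ is exactly the minimal number of strands a path from just above the $i$th endpoint must cross to reach the sky. The requirement that \dotmap appears only at depth $0$ is forced by $P_1$ for dots. The most delicate requirement is that $*$ appears only at depth $p-1$: a jellyfish has $2p-1$ legs total, $P_5$ says none of these end at a dot, and $P_4$ forces each jellyfish to share at most $p-1$ legs with any other, so each jellyfish must have exactly $p-1$ legs to each side of every $*$-endpoint it contributes to the ground; by $P_3$ those $p-1$ strands on each side are the maximum possible, placing each $*$ at depth $p-1$. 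With these verifications done, the bijection is visible via the plateau picture of Proposition \ref{mountplat}, and the proof is complete.
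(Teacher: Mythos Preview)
Your overall strategy matches the paper's: verify $\mathcal{P}$ for every $\gamma_{n,p}(w)$ by inspection of the boxes, and for the converse scan the ground from left to right to reconstruct a word in $L_{N_p}$. The problem is in your letter assignment for direction (ii).

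You assign $*$ to \emph{every} ground strand that terminates at a jellyfish. But in a diagram of $Y_p(n)$ a single jellyfish has $2p-1$ legs, and typically several of them reach the ground, not just one. Only the \emph{middle} leg should be labelled $*$; the legs to the left of the middle must be labelled \lcap\ and those to the right \rcap. The paper's scan therefore has an extra layer of cases: if the strand terminates at a jellyfish, decide whether it is the $p^{\text{th}}$ leg (counting from a leg touching the sky), a leg to its left, or a leg to its right, and assign $*$, \lcap, or \rcap\ accordingly. With your rule, the simplest jellyfish diagram $\gamma_{2p-1,p}(\langle^{p-1}*\rangle^{p-1})$ would scan to a word beginning with $p-1$ copies of $*$ at depth~$0$, which is rejected by $N_p$.

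Your own verification paragraph betrays the slip: you argue that ``each jellyfish must have exactly $p-1$ legs to each side of every $*$-endpoint it contributes to the ground,'' which presupposes that a jellyfish contributes \emph{one} $*$-endpoint (the middle leg). That is the correct picture, but it is inconsistent with the assignment rule you wrote two paragraphs earlier. Once you replace your rule by the paper's three-way split for jellyfish legs, the rest of your argument (nonnegativity from planarity, the bound $d_i\le p-1$ from $P_3$, \dotmap\ at depth $0$ from $P_1$, and $*$ at depth $p-1$ from counting legs using $P_1,P_4,P_5$) goes through.
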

\begin{proof}
It is clear any diagram constructed with the boxes of Definition $16$ satisfy $\mathcal{P}$. In the other direction we know each strand component is connected to the ground and oriented upward from the ground $(P_7)$. We will scan from left to right along the ground, and each time we encounter a strand end follow the strand to see which vertex it terminates in. We build a word $w \in L_{N_p}$ by assigning a letter to each strand end depending on which of the following cases we have: 
\begin{enumerate}
\item The strand terminates at a dot : $\bullet$.
\item The strand terminates at a jellyfish 
	\begin{enumerate}
	\item and is the $p^{\text{th}}$ (middle) leg counting from a leg touching the sky: $*$.
	\item and is left of the middle leg: \lcap
	\item and is right of the middle leg: \rcap
	\end{enumerate}

\item The strand terminates in a bracket 
	\begin{enumerate}
	\item and the other strand terminating in this bracket ends to the right of the current strand end: \lcap
	\item and the other strand terminating in this bracket ends to the left of the current strand end: \rcap
	\end{enumerate}
\end{enumerate}
The properties $\mathcal{P}$ imply that one of the cases above occurs, and that the image of $w$ is planar isotopic to the diagram we started with.
\end{proof}


Now we give an algorithm to write any diagram as a linear combination of terms from the $Y_n$. First we will need to show some consequences of the defining relations in $Diag_p$ which will be used in the reduction algorithm. To simplify notation, whenever there are many copies of the same strand (including orientation markings, brackets, and $\bullet$), we draw a red ring around one copy of the strand labelled by the number of copies contained in that ring. We denote the disoriented Jones-Wenzl on $p-1$ strands with the identity term removed by $\boxtimes$, and the jellyfish by \begin{tikzpicture}\draw[fill=gray](0,0.2) circle (2.5pt);\end{tikzpicture}.

\begin{prop} The relations below hold in $Diag_p$, and are consequences of the defining relations. 

\begin{enumerate}
\item Strand depth reduction: \vinclude{pstrands} $\overset{\scriptscriptstyle{E_7}}{=}$ \vinclude{prelt1} $+$ \vinclude{prelt2} $-$ \vinclude{prelt3} $-$ \vinclude{prelt4}

\item Freeing the dots: \vinclude{dotfreeleft} $\overset{\scriptscriptstyle{E_8}}{=} \underset{0\leq i \leq n}{\mathlarger{\sum}}$ \vinclude{dotfreeright}

\item Capping a jellyfish: \vinclude{spidercap} $\overset{\scriptscriptstyle{E_9}}{=} 0$

\item Snipping the legs of $p$-connected jellyfish:\begin{center} \vinclude{snipleft} $\overset{\scriptscriptstyle{E_{10}}}{=}$ \vinclude{snipright}\end{center}
\end{enumerate}
\end{prop}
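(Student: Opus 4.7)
The plan is to derive each of the four auxiliary relations $E_7$ through $E_{10}$ from the defining data of $\text{Diag}_p$: the generators $G_1$--$G_4$ together with the relations $E_1$--$E_6$ and $E_{s_1}, E_{s_2}$. I would tackle them in an order that builds up from the most conceptual to the most combinatorial, so that each step has the prior relations available as shortcuts.

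First I would prove $E_9$ (capping a jellyfish), since it is foundational and the cleanest. The argument exploits the symmetry of the jellyfish: by $E_{s_1}$ and $E_{s_2}$, the jellyfish is invariant under the transposition of any two adjacent legs, so composing it with a cap on legs $i, i+1$ equals composing with the same cap after inserting a crossing on those legs. But the cap in $\text{DTL}(2)$ is built from $\varphi$ via $G_2$, and one checks directly (as in the proof of Proposition~\ref{prop:dtl2} for $E_2$) that swapping the inputs of a cap introduces a sign of $-1$. Hence the cap on adjacent jellyfish legs equals its own negative, so it vanishes in $\mathbb{F}_p$ for $p$ odd; the $p=2$ case requires a separate direct computation using the explicit form of $j_2$.

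Next I would prove $E_8$ (freeing the dots) by induction on the number $n$ of jellyfish legs that the dot has to pass over. The inductive step applies the crossing relation $E_5$ to resolve the crossing between the dot-terminated strand and the next jellyfish leg into two terms: an identity term that fixes the dot one leg further along, and a cap-cup term that, by $E_9$ applied to the cap on two adjacent jellyfish legs, simplifies into the next summand of the desired expansion. Iterating this gives the sum over $0 \leq i \leq n$.

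Third I would prove $E_7$ (strand depth reduction) by running $E_6$ in reverse. Insert $Sym_{p-1}$ on $p-1$ of the $p$ strands; by $E_6$ this can be exchanged for a dot composed with a jellyfish whose remaining $p-1$ legs are attached to the strands. Now expand the jellyfish using $E_8$ to free its dot and $E_9$ to kill terms where any of its legs end in a cap. Matching the residual configurations with the four pictures on the RHS of $E_7$ then amounts to identifying which terms survive these cancellations.

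The main obstacle I expect is $E_{10}$ (snipping the legs of $p$-connected jellyfish). The idea is to take $p-1$ of the connecting strands and insert $Sym_{p-1}$; one side of $E_6$ lets us replace this with a dot-plus-jellyfish, and then $E_8, E_9$, and the symmetry of the jellyfishes combine to disconnect the two jellyfishes while leaving behind exactly the shape shown on the RHS. The difficulty here is purely bookkeeping: there are many terms produced by expanding $Sym_{p-1}$ into permutations and then each permutation into crossings via $E_5$, and the bulk of the work is showing that after applying $E_9$ to kill cap-on-adjacent-legs terms, the surviving sum telescopes to a single clean diagram. This is where a careful combinatorial argument, possibly by induction on the number of shared legs beyond $p$, will be needed.
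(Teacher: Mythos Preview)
Your overall strategy is sound for odd $p$, but there is a genuine gap in the $p=2$ case of $E_9$, and your ordering creates a dependency problem that the paper's ordering avoids.

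For $E_9$ with $p=2$ you propose ``a separate direct computation using the explicit form of $j_2$.'' That is a computation in $\text{Alg}_p$, not a derivation in $\text{Diag}_p$; the proposition asserts these relations are \emph{consequences of the defining relations} of $\text{Diag}_p$, so appealing to the value of $T_p(G_4)$ is not allowed. The antisymmetry argument you give (cap is skew, jellyfish is symmetric, hence the capped jellyfish equals its own negative) only yields $2x=0$, which is vacuous in characteristic $2$. The paper handles $p=2$ by applying the strand depth reduction relation $E_7$ to the two strands running from the cap into the jellyfish and checking that every resulting term cancels via the defining relations. This means $E_7$ must be available \emph{before} $E_9$, which is exactly opposite to your ordering.

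The paper therefore proves $E_7$ first, and its argument does not use $E_9$ at all: it starts from $E_6$, places a cap across the relation, pulls the dot and bracket through by naturality, resolves the resulting crossing with $E_5$, and then separates the identity term from $\boxtimes$ in the Jones--Wenzl. Your sketch of $E_7$ instead relies on $E_9$ to kill cap-on-leg terms, so the circularity is real. Similarly, the paper's proof of $E_8$ is pure $\text{DTL}$: naturality of the dot plus one application of $E_5$ per step, with no jellyfish or $E_9$ involved; your invocation of $E_9$ there is unnecessary and suggests a misreading of what the diagram $E_8$ actually depicts.

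Finally, for $E_{10}$ the paper avoids the ``combinatorial mess'' you anticipate precisely by applying the already-packaged $E_7$ to the $p$ shared legs rather than unfolding $\text{Sym}_{p-1}$ by hand. The $\boxtimes$ term then dies in one stroke by $E_9$ (every summand caps the jellyfish), and two further uses of $E_6$ plus symmetry finish it. Reordering to $E_7 \to E_8 \to E_9 \to E_{10}$ and using $E_7$ as the workhorse would repair your plan.
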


\begin{proof}
\begin{enumerate}
\item Start with relation $E_{6}$, where we denote the disoriented Jones-Wenzl by $\square$. 
\begin{center}
\vinclude{e10left} $-$ \vinclude{lotsdots} $=$ \vinclude{e10jw}
\end{center}
Now add a cap over both sides of the equation, and use naturality in the first term of the left hand side to pull $\bullet$ and the bracket through the cap. 

\begin{center}
\vinclude{e10leftcap} $-$ \vinclude{lotsdotscap} $=$ \vinclude{e10jwcap}
\end{center}

We then use the crossing relation where indicated and simplify brackets in the result.

\begin{center}
\vinclude{e10leftpostcap} $-$ \vinclude{e10leftpostcap2} $-$ \vinclude{lotsdotscap} $=$ \vinclude{e10jwcap}
\end{center}

Next use the definition of $\boxtimes$ on the RHS. 

\begin{center}
\vinclude{e10leftpostcap} $-$ \vinclude{e10leftpostcap2} $-$ \vinclude{lotsdotscap} $=$ \vinclude{e10jwcaplast1} $+$ \vinclude{e10jwcaplast2}
\end{center}

Finally we apply \vinclude{bracking} to the rightmost $p$ strands of each term, simplify brackets, and rearrange to give the desired equality.

\item We use induction on $n$. In the base case $n=0$ there is nothing to show. For the induction step use naturality of $\bullet$, and then apply the crossing relation (in the first term of the right hand side we have also used bracket cancellation). 
\begin{center}
\vinclude{dotfreeproof1} $=$ \vinclude{dotfreeproof2} + \vinclude{dotfreeproof3}
\end{center}
The result follows from applying the induction hypothesis to each term of the right hand side.

\item Consider the sequence of equalities below. Symmetry of the jellyfish gives equality $1$. Using the crossing relation gives equality $2$, and then equality $3$ follows by bracket reversal and the value of the circle. We see that the capped jellyfish is equal to its negative, implying it is $0$ when $p \neq 2$.
\begin{center}
\vinclude{spidercap} $\underset{\scriptstyle{1}}{=}$ \vinclude{jellycapproof2} $\underset{\scriptstyle{2}}{=}$ \vinclude{spidercap} $+$\vinclude{jellycapproof3R} $\underset{\scriptstyle{3}}{=}$ \vinclude{spidercap} $-$ $2$\vinclude{spidercapR} $=$ $-$\vinclude{spidercapR}
\end{center}
If $p=2$ we can use strand depth reduction on the pair of strands flowing from the bracket to the jellyfish. Using the defining relations of $Diag_p$, showing that all terms cancel is straightforward.
\item We apply strand depth reduction to the shared legs of the jellyfish. The term with $\boxtimes$ between the jellyfish vanishes, since every summand of $\boxtimes$ will apply a cap to a jellyfish. In the final term of the RHS we used naturality of $\bullet$ and symmetry of the jellyfish (also reversing all brackets adjacent to the rightmost dot). 
\begin{center}
\vinclude{snipleft} $=$ \vinclude{jellycon1} $+$ \vinclude{jellycon2} $-$ \vinclude{jellycon3}
\end{center}

Now we use $E_6$ on each of the first two terms of the RHS, where the jellyfish has a dot on one of its legs. In the fourth term of the result below we use naturality of the bracket to pull them through the Jones-Wenzl and cancel with the opposite facing brackets on the other side.
\begin{center}
\vinclude{snipleftnew} $=$ \vinclude{jellycon1e6} $+$ \vinclude{jellycon2e6} $+$ \vinclude{snipsymleft} $+$ \vinclude{snipsymright} $-$ \vinclude{jellycon3small}
\end{center}
 In both the third and fourth term of the RHS above we can remove the Jones-Wenzl (symmetrizer) by symmetry of the jellyfish, so that we get $2$ copies of the digram we started with. When $p>2$, the first and second term of the RHS vanish by using $E_6$ again where the jellyfish has dots on $p-1$ of its legs (pick any of the dots to by the dot attached to the jellyfish in the LHS of $E_6$), and noting that every term will contain a double dot or capped jellyfish.

\begin{center}
\vinclude{snipleftnew} $=$ $2$ \vinclude{snipleftnew} $-$ \vinclude{jellycon3small}
\end{center}

Rearranging, we have our result.  If $p=2$, while the computation is a bit different it is straightforward and we will still get the same final result, where further the RHS is just equal to the cap.

\end{enumerate}
\end{proof}

\begin{prop}
Any diagram $U$ of $\beta_p(n)$ is in the span of the $Y_p(n)$ via the following algorithm.
\begin{enumerate}
\item Pull all dots and jellyfish to the sky using naturality.
\item Eliminate any crossings using $E_4$.
\item Apply strand depth reduction ($E_7$) to reduce the distance to the sky for any point whose distance is $p$ or greater. Repeat until $P_3$ is satisfied.
\item Reduce the number of legs shared between any pair of jellyfish to be less than $p$ using the snipping relation $(E_{10})$. Repeat until $P_4$ is satisfied.
\item Remove any jellyfish with a dot connected to any of their legs using $E_6$. Repeat until $P_5$ is satisfied.
\item Use $E_1$ and $E_4$ to reduce the number of brackets on any strand component to at most $1$.
\item Use $E_3$ to remove any floating circles and $E_2$ to remove any double dots (these are the only possible strand components with no attachment to the ground).
\item Use $E_4$ to direct brackets as in $P_8$.

\end{enumerate}
\end{prop}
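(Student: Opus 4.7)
The approach mirrors the characteristic $0$ proof of the analogous spanning proposition: verify that each step of the algorithm can be performed, terminates, and preserves all properties $P_i$ established by earlier steps. Since the eight properties $P_1,\dots,P_8$ together characterize the terms of $Y_p(n)$, the output of the full algorithm lies in $\mathrm{span}(Y_p(n))$, which is the desired conclusion.

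First I would check termination of each step by assigning a well-founded complexity. Step $(1)$ strictly decreases the number of $\bullet$ and jellyfish vertices not in the closure of the sky (naturality moves each such vertex monotonically upward); step $(2)$ decreases the number of crossings using $E_4$; step $(3)$ decreases the lexicographic pair (maximum distance to the sky attained in the diagram, number of deepest points), since each application of $E_7$ inserts caps that split the region of depth $\geq p$ in all four RHS terms; step $(4)$ decreases the number of ordered jellyfish pairs sharing $\geq p$ legs; step $(5)$ decreases the number of jellyfish; and steps $(6)$--$(8)$ clearly decrease their target counts (brackets per component, non-grounded components, wrongly-directed brackets). Together with the preservation arguments below, this shows the full procedure terminates.

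Next I would verify property preservation by going through each step, as in the characteristic $0$ argument. Step $(2)$ uses only $E_4$, which does not touch $\bullet$ or jellyfish, so $P_1$ survives. Step $(3)$ uses $E_7$, whose four RHS terms are planar (preserving $P_2$) and whose new dots, brackets and jellyfish were introduced via capping of $E_6$, so they sit in the closure of the sky (preserving $P_1$). Step $(4)$ uses $E_{10}$, whose derivation from $E_6$, $E_7$ modifies the diagram only in a neighborhood of the shared legs near the top, so $P_1, P_2, P_3$ survive. Step $(5)$ expands $E_6$: after expanding the symmetrizer each term is non-crossing, contains only localized modifications, and introduces no deep regions, preserving $P_1$--$P_4$. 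Steps $(6)$--$(8)$ only rearrange or delete brackets and closed strand components, hence preserve $P_1$--$P_5$.

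The main obstacle is step $(3)$. The relation $E_7$ has four terms, each containing a freshly-introduced cap region carrying brackets, dots, and possibly a new jellyfish; I would need to check carefully that the chosen depth complexity strictly drops in every one of these four terms and that no term globally reintroduces depth $\geq p$ elsewhere. Both follow from the derivation of $E_7$ from $E_6$ by adjoining a cap: the cap strictly reduces the local depth where the LHS had $p$ parallel strands, and the RHS configurations are supported in a neighborhood of that former $p$-strand region, leaving depths elsewhere unchanged. Once this is verified, and the analogous local-support observation is made for $E_{10}$ in step $(4)$, the remaining steps are routine and parallel the characteristic $0$ reduction, so the algorithm rewrites $U$ as a linear combination of diagrams satisfying $P_1$--$P_8$, i.e.\ of elements of $Y_p(n)$.
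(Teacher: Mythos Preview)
Your overall structure---termination plus step-by-step preservation of the $P_i$---matches the paper's approach, and most of your individual checks are fine. But you are missing one genuinely new ingredient that has no analogue in the characteristic $0$ argument.

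In step $(7)$ you assert, following the statement, that the only floating components (strand components not attached to the ground) are circles and double dots. In characteristic $0$ this is automatic, because the only vertices are $\bullet$ and brackets. Here, however, the jellyfish have $2p-1$ legs, and a priori nothing prevents a floating component consisting of several jellyfish whose legs are joined to one another (each pair sharing fewer than $p$ legs, so $P_4$ holds). Your reduction never addresses such a configuration, and none of $E_2$, $E_3$, $E_6$, $E_9$, $E_{10}$ applies to it directly. The paper closes this gap with a short planarity argument (citing \cite{BMPS}, Theorem~3.8): once all jellyfish touch the sky, have no caps ($E_9$), no attached dots ($P_5$), and share fewer than $p$ legs pairwise ($P_4$), any putative floating graph of jellyfish would, by planarity, contain a vertex with at most two neighbors; but then that vertex must send at least $p$ of its $2p-1$ legs to one neighbor, contradicting $P_4$. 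This is the missing idea---without it, step $(7)$ is not justified and the algorithm may fail to reach $Y_p(n)$.

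A smaller point: in your termination claim for step $(3)$ you should be explicit that $E_7$ is always applied to a stack of $p$ strands whose top strand borders the sky (as the paper specifies), since otherwise the new jellyfish and dots on the RHS of $E_7$ need not touch the sky and $P_1$ could fail.
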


\begin{proof}

We first point out why all steps, and therefore the algorithm as a whole terminates. Step 3 terminates since it strictly reduces the number of connecting strands between jellyfish without introducing any new jellyfish. Step 4 terminates since it strictly reduces the shortest path to the sky for any given point in a region of the diagram, each region is path connected, and there are finitely many regions. Step 5 terminates since it strictly reduces the number of jellyfish with dots on their legs. The rest of the steps are finite by definition, so the algorithm terminates. Now we proceed as in Section $3$, again noting that the steps are constructed to achieve exactly the properties in the definition of $Y_p(n)$.

\begin{itemize}[]
\item Step 2 preserves $P_1$: No dots or jellyfish are introduced, and the ones that exist will still touch the sky, as only their legs are altered.
\item Step 3 preserves $P_1$ and $P_2$: Always apply this relation on a stack of $p$ strands whose top strand is adjacent to the sky, so that the dots and jellyfish introduced touch the sky. No crossings are introduced. 
\item Step 4 preserves $P_1$ through $P_3$: No crossings are introduced, and the jellyfish of the LHS were assumed to be touching the sky at this point, so the dots and jellyfish of the RHS must touch the sky as well. Lengths of paths to the sky can only be decresed by this relation.

\item Step 5 preserves $P_1$ through $P_4$: No crossings are introduced, and dots introduced are touching the sky since the jellyfish being removed touched the sky. Looking at a point between any of the jellyfish legs, the length of the shortest path to the sky can only decrease, as the jellyfish becomes all dots or some element of $DTL$. No jellyfish are introduced, so $P_4$ is preserved.
\item Steps 6,7,8 preserve $P_1$ through $P_5$: We are removing floating components and flipping or removing brackets from strands, none of which affect any prior property.
\end{itemize}

 
We need to worry about components which are connected jellyfish not attached to the ground, but we claim no such planar graphs exist once the connectivity of jellyfish is below $p$, no jellyfish are connected to dots, all jellyfish are touching the sky, and no jellyfish has a cap. This follows from a graph theoretic argument as in the proof of Theorem 3.8 in \cite{BMPS}. Any such graph has a node with exactly two neighbors, and then at least $p$ of its edges must connect to a neighboring node contradicting the assumption that jellyfish were connected by less than $p$ strands.

\end{proof}

\section{Future directions}
\subsection{Fundamental theorems for rings of vector invariants}
Finding generators and relations for the planar algebra of $Rep_k(G,V)$ is a $\otimes$-version of the first and second fundamental theorems of invariant theory for ${(V^{\oplus n})}^G$, i.e. we are able to compute the subring of multilinear invariants from the planar algebra presentation. This is since maps in $\text{Hom}_G(V^{\otimes n}, \mathbbm{1})$ give multilinear $G$-invariants of $V^{\oplus n}$. Further, a presentation of these spaces gives $\otimes$-versions of the first and second fundamental theorems for ${(V^{\oplus n})}^G$. This is discussed throughout Chapter $5$ of \cite{SymmBook}, and of particular interest is Lemma 5.4.1. The case of $Rep_{\mathbb{C}}(\mathbb{C}^+)$ discussed in this work leads to another proof of the Nowicki conjecture on Weitzenbock derivations as in \cite{NOW}. The key process is to solve the $\otimes$-version of the fundamental theorems, and use the process of polarization and restitution \cite{SymmBook,INVpBook}. In the characteristic $p$ case while there are partial results (such as in \cite{poleresP}), things are more complicated, but the planar algebra results inform the invariant theory.
\subsection{Generalization to $\mathbb{F}_q$ and other generating objects}
The jellyfish is a map of representations of $\mathbb{F}_q^+$ for any finite field $\mathbb{F}_q$ and any number of legs, as in the proposition and proof of the appendix. We would like to generalize from $p$ to $q=p^n$ and give similar results to those given in this thesis. Further we could change the generating object from the standard $2$ dimensional discussed here to other indecomposables.

\section{Appendix}

\subsection{Defining the use of the terminology ground and sky}
\begin{mydef} We call $\delta D_0$ the \textbf{ground}, and the connected component of $\star$ in the complement of a tangle the $\textbf{sky}$.
\end{mydef}

When drawing diagrams we'll assume an isotopy of $D_0$ to a half disc (assume the corners are slightly rounded), which puts the $\star$ of the output disc along the boundary semicircle, and all strands which intersect $\delta D$ on the diameter of the half disc. The sky is shaded in the images below.

\begin{center}
\vinclude{skyground1} $=$ $\underbrace{\vinclude{skyground2}}_{\text{ground}}$
\end{center}

\subsection{$\mathbb{F}_q^+$-invariance of the jellyfish maps}
\begin{mydef}
Let $\mathbb{F}_q$ be a finite field, and $V=\mathbb{F}_q^2$ have basis $(v_0,v_1)$. Make $V$ a representation of the additive group $\mathbb{F}_q^+$ via $x \mapsto \left( \begin{array}{c c} 1 & x \\ 0 & 1 \end{array} \right)$, and take $\mathbb{F}_q$ to be the trivial representation of $\mathbb{F}_q^+$. Using the notations of Definition \ref{bits} define a linear map $j_{q,n}:V^{\otimes n} \rightarrow \mathbb{F}_q$ by its values on $Z_n$:

$$j_{q,n}(z) = \begin{cases} 1 : q-1 | l_n(z), l_n(z) \neq 0,n  \\ 0: else \end{cases} $$
\end{mydef}

\begin{prop}
 For any $n \in \mathbb{N}$, and any $q=p^i$, $j_{q,n}$ is a map of $\mathbb{F}_q^{+}$ representations.
\begin{proof}
We need to show $j_{q,n}(z) = j_{q,n}(x \cdot z)$ $\forall x \in \mathbb{F}_q, \forall z \in Z_n$. We first compute the action of $x \in \mathbb{F}_q$ on some $z \in Z_n$, setting $l=l_n(z)$. We may assume by symmetry of $j_{p,n}$ that $z=\high^l \cdot \low^{n-l}$:
$$
x \cdot z = \underset{i=0}{\sum^{l}}{x^i}\hspace{.5mm}\underset{w\in Z_n(l-i) \hspace{3.5mm}}{\sum{w \cdot \low^{n-l}}}
$$

\noindent Applying $j_{q,n}$ to the above expression gives
$$
j_{q,n}(x\cdot z) = j_{q,n}(z) + \underset{i=1}{\sum^{l - 1}}{x^i}\hspace{0.5mm}\underset{w \in Z_n(l-i) \hspace{3mm}}{\sum{j_{q,n}(w)}}
= j_{q,n}(z) + \sum_{0 < j(q-1) < l}{{l}\choose{j(q-1)}}x^{l-j(q-1)}
$$

\noindent Since $x \in \mathbb{F}_q$, we have $x^{q-1} = 1$ and can simplify the above expression:

$$
j_{q,n}(x\cdot z) = j_{q,n}(z) + x^{l}\sum_{0 < j(q-1) < l}{{l}\choose{j(q-1)}}
$$

We then need to check that $j_{q,n}(z) = j_{q,n}(z) + x^{l}\sum_{0 < j(q-1) < l}{{l}\choose{j(q-1)}}$, or equivalently each $x \in \mathbb{F}_q$ must be a root of $x^{l}\sum_{0 < j(q-1) < l}{{l}\choose{j(q-1)}}$. We see that $0$ is a root, so assume $x \in F_q^{\times}$, and cancelling $x^{l(v)}$ we must show $S = \sum_{0 < j(q-1) < l}{{l}\choose{j(q-1)}} \equiv_p 0$.

The generating function for ${j}\choose{k}$ is $(1 + t)^j$. We would like to exclude the constant term and $t^j$, and then take the sum of coefficients of each $t^{j(q-1)}$. This can be done by fixing a primitive root of unity $g$ of order $q-1$ in $\mathbb{F}_q$, and replacing $t$ by $g^mt$ in $(1+t)^j-(1+t^j)$, then summing over $m$ from $0$ to $q-2$ and evaluating at $t=1$. The result of this is:

$$
\gamma= - \sum_{m=0}^{q-2}[(1+g^m)^{l}-(1 + g^{m \cdot l})]
$$

Now if $q-1$ divides $l$ we know $(1 + g^m)^{l} = 1$ for all but one value of $m$, where $g^m=-1 \implies (1+g^m)^{l}=0$, and $(1+g^{m\cdot l}) = 2$. In this case we get $\gamma = - (-(q-2)-2) = q = 0$.  If $q-1$ does not divide $l$ we have $\sum_{m=0}^{q-2}(1+g^{m\cdot l}) = q-1 = -1$, and $1 + g^m$ will range over $\mathbb{F}_q - \{1\}$. This lets us write

$$
\gamma = -1-\sum_{m=0}^{q-2}(1+g^m)^{l}=-1-\sum_{y \in \mathbb{F}_q-\{1\}}{y^{l}} = \sum_{y \in \mathbb{F}_q}{y^{l}}=\sum_{z \in \mathbb{F}_q}{z} = 0 
$$ 

\end{proof}
\end{prop}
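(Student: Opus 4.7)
The plan is to reduce to a canonical form, expand the action explicitly, and then use a character sum identity to evaluate a certain sum of binomial coefficients modulo $p$. First I would invoke the symmetry of $j_{q,n}$ (which depends only on the length $l_n(z)$, not on the positions of the $v_1$ factors) to assume without loss of generality that $z = v_1^{\otimes l}\otimes v_0^{\otimes(n-l)}$, where $l=l_n(z)$. Since $x\cdot v_0=v_0$ and $x\cdot v_1 = xv_0+v_1$, one gets
\[
x\cdot z \;=\; (xv_0+v_1)^{\otimes l}\otimes v_0^{\otimes(n-l)} \;=\; \sum_{k=0}^{l}\,x^{\,l-k}\!\!\sum_{w\in Z_l(k)} w\otimes v_0^{\otimes(n-l)},
\]
where $k$ records the number of $v_1$ factors remaining in the first $l$ slots after expansion. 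Applying $j_{q,n}$ collapses this to
\[
j_{q,n}(x\cdot z) \;=\; \sum_{\substack{0<k,\ (q-1)\mid k\\ k\le l,\ k\neq n}} \binom{l}{k}\,x^{\,l-k}.
\]

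The core of the proof is to evaluate this sum and compare it with $j_{q,n}(z)$. Let $g$ be a generator of the cyclic group $\mathbb{F}_q^\times$, so that $\sum_{m=0}^{q-2} g^{mk}$ equals $-1$ when $(q-1)\mid k$ and $0$ otherwise. The binomial theorem then gives the identity
\[
\sum_{m=0}^{q-2} (1+g^m t)^{l} \;=\; -1 \;-\; \sum_{\substack{k>0\\ (q-1)\mid k}}\binom{l}{k}t^{k},
\]
and substituting $t = 1$ after multiplying by $x^l$ (or directly setting $t=x^{-1}$ when $x\ne 0$, then clearing) lets me rewrite
\[
\sum_{\substack{k>0,\ (q-1)\mid k\\ k\le l}}\binom{l}{k}x^{\,l-k} \;=\; -\sum_{y\in \mathbb{F}_q} y^{\,l}
\]
for every $x\in\mathbb{F}_q^\times$. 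The case $x=0$ is handled directly: only the $k=l$ term survives, and it contributes $1$ precisely when $(q-1)\mid l$ and $l>0$, matching the right-hand side.

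Now I invoke the standard power-sum identity for finite fields: $\sum_{y\in\mathbb{F}_q} y^{l}$ equals $-1$ when $(q-1)\mid l$ and $l>0$, and vanishes otherwise. This yields
\[
\sum_{\substack{k>0,\ (q-1)\mid k\\ k\le l}}\binom{l}{k}x^{\,l-k} \;=\; \begin{cases}1 & (q-1)\mid l,\ l>0,\\ 0 & \text{otherwise,}\end{cases}
\]
for every $x\in\mathbb{F}_q$. When $0<l<n$ this is exactly $j_{q,n}(z)$ and we are done. When $l=n$ the sum $j_{q,n}(x\cdot z)$ excludes the $k=n$ term; subtracting $\binom{n}{n}x^0=1$ (present only when $(q-1)\mid n$) yields $0$, which matches $j_{q,n}(v_1^{\otimes n})=0$. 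The case $l=0$ is trivial since $x\cdot z = z$.

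The main obstacle is not the character-sum identity itself, which is standard, but rather bookkeeping of the boundary exclusions $l_n(z)\neq 0,n$ in the definition of $j_{q,n}$. These exclusions make the universal identity $j_{q,n}(x\cdot z)=j_{q,n}(z)$ only hold after correctly separating out the $k=l$ term (when $l<n$) and the $k=n$ term (when $l=n$); every other step is a formal manipulation of the generating function $\sum_m(1+g^m t)^l$.
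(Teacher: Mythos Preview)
Your proposal is correct and follows essentially the same route as the paper: reduce by symmetry to $z=v_1^{\otimes l}\otimes v_0^{\otimes(n-l)}$, expand the action binomially, and evaluate the resulting sum of binomial coefficients at multiples of $q-1$ via the roots-of-unity filter $\sum_{m=0}^{q-2}(1+g^m t)^l$ together with the power-sum identity $\sum_{y\in\mathbb{F}_q}y^l\in\{0,-1\}$. The only organizational difference is that the paper peels off the $k=l$ term as $j_{q,n}(z)$ at the outset and then proves the residual sum $\sum_{0<j(q-1)<l}\binom{l}{j(q-1)}$ vanishes in $\mathbb{F}_p$ (doing a case split on whether $q-1\mid l$), whereas you keep the $k=l$ term inside the sum, evaluate the whole thing as $-\sum_{y\in\mathbb{F}_q}y^l$, and then treat the boundary cases $l=0$ and $l=n$ separately at the end. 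Both packagings are equivalent; your explicit handling of the exclusions $l\neq 0,n$ is a nice touch that the paper leaves implicit.
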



\bibliographystyle{alpha}

\end{document}